\DeclareSymbolFont{cyrletters}{OT2}{wncyr}{m}{n}
\DeclareMathSymbol{\sha}{\mathalpha}{cyrletters}{"58}
\newcommand{\overtext}{\overset}
\newcommand{\mbf}{\mathbb{F}}
\newcommand{\mbr}{\mathbb{R}}
\newcommand{\mbz}{\mathbb{Z}}
\newcommand{\mbc}{\mathbb{C}}
\newcommand{\mbp}{\mathbb{P}}
\renewcommand{\:}{\colon}
\newcommand{\ra}{\rightarrow}
\newcommand{\iso}{\cong}
\newcommand{\bs}{\backslash}
\newcommand{\ssq}{\subseteq}
\DeclareMathOperator{\Pic}{Pic}
\DeclareMathOperator{\Div}{Div}
\newcommand{\wtilde}{\widetilde}
\DeclareMathOperator{\Aut}{Aut}
\renewcommand{\div}{\operatorname{div}}
\newcommand*{\longhookrightarrow}{\ensuremath{\lhook\joinrel\relbar\joinrel\rightarrow}}
\newcommand*{\inj}{\longhookrightarrow}
\newcommand{\bbm}{\begin{bmatrix}}
\newcommand{\ebm}{\end{bmatrix}}
\newcommand{\bpm}{\begin{pmatrix}}
\newcommand{\epm}{\end{pmatrix}}
\newcommand{\xym}[1]{\xymatrix{#1}}
\newcommand{\gen}[1]{\langle #1 \rangle}
\DeclareMathOperator{\im}{Im}
\renewcommand{\Im}{\im}
\newcommand{\set}[1]{\left \{#1 \right \}}
\newcommand{\Sym}{\text{Sym}}
\newtheorem{theorem}{Theorem}[section]
\newtheorem{lemma}[theorem]{Lemma}
\newtheorem{proposition}[theorem]{Proposition}
\newtheorem*{theorem*}{Theorem}
\newtheorem{corollary}[theorem]{Corollary}
\newtheorem{example}[theorem]{Example}
\newtheorem*{rep@theorem}{\rep@title}
\newcommand{\newreptheorem}[2]{%
	\newenvironment{rep#1}[1]{%
		\def\rep@title{#2 \ref{##1}}%
		\begin{rep@theorem}}%
		{\end{rep@theorem}}}
\theoremstyle{definition}
\newtheorem{definition}[theorem]{Definition}
\newtheorem{remark}[theorem]{Remark}
\DeclareMathOperator{\rank}{rk}
\DeclareMathOperator{\GL}{GL}
\DeclareMathOperator{\SL}{SL}
\DeclareMathOperator{\SO}{SO}
\DeclareMathOperator{\Hom}{Hom}
\DeclareMathOperator{\PGL}{PGL}
\DeclareMathOperator{\Lie}{Lie}
\DeclareMathOperator{\trace}{Tr}
\DeclareMathOperator{\ad}{ad}
\DeclareMathOperator{\Stab}{Stab}
\newcommand{\mbg}{\mathbb{G}}
\newcommand{\mcDP}{\mathcal{DP}}
\newcommand{\mcS}{\mathcal{S}}
\newcommand{\mcT}{\mathcal{T}}
\newcommand{\Gm}{\mathbb{G}_m}
\newcommand{\Ga}{\mathbb{G}_a}
\newcommand{\sep}{\mathrm{sep}}
\newcommand{\reg}{\mathrm{reg}}
\newcommand{\rss}{\mathrm{rss}}
\newcommand{\ram}{\mathrm{ram}}
\newcommand{\sram}{\mathrm{s.ram}}
\newcommand{\tram}{\mathrm{t.ram}}
\newcommand{\node}{\mathrm{node}}
\newcommand{\cusp}{\mathrm{cusp}}
\newcommand{\id}{\mathrm{id}}
\newcommand{\heisenberg}[1]{\wtilde H_{#1} }
\newcommand{\catname}[1]{{\normalfont\mathbf{#1}}}
\newcommand{\exactseq}[4][]{\xym{ 0 \ar[r] & #2 \ar[r] & #3 \ar[r]^{#1} & #4 \ar[r] & 0}}
\newcommand{\arhook}{\ar@{^{(}->}}
\mathchardef\mhyphen="2D
\newcommand{\mapdef}[5]{
	\begin{eqnarray*} 
		#1 \: & #2 & \ra #3 \\  
		\	& #4 & \mapsto #5 
	\end{eqnarray*}
}
\newcommand{\leftmarginhack}{ccccccccc}
\title{An arithmetic invariant theory of curves from $E_8$}
\author{Avinash Kulkarni}
\subjclass[2010]{primary: 11G30, secondary: 11E72}
\keywords{arithmetic invariant theory, E8, trigonal curves}
\thanks{The author was partially supported by NSERC}
\date{\today}
\begin{document}

\begin{abstract}
	Let $k$ be a field of characteristic $0$, let $C/k$ be a uniquely trigonal genus $4$ curve, and let $P \in C(k)$ be a simply ramified point of the uniquely trigonal morphism. We construct an assignment of an orbit of an algebraic group of type $E_8$ acting on a specific variety to each element of $J_C(k)/2$. The algebraic group and variety are independent of the choice of $(C,P)$. We also construct a similar identification for uniquely trigonal genus $4$ curves $C$ with $P \in C(k)$ a totally ramified point of the trigonal morphism.
	
	Our assignments are analogous to the assignment of a genus $3$ curve with a rational point $(C,P)$ to an orbit of an algebraic group of type $E_7$ exhibited by Jack Thorne \cite{thorne2016arithmetic}. Our assignment is also analogous to one constructed by Bhargava and Gross \cite{bhargava2013average}, who use it determine average ranks of hyperelliptic Jacobians.
\end{abstract}

\maketitle


\section{Introduction}
	
	Given a smooth projective curve over a field $k$, we are often interested in computing the $k$-rational points on this curve or its Jacobian variety. If $k$ is a number field, the $k$-rational points on the Jacobian variety has the structure of a finitely generate abelian group, and performing a $2$-descent allows us to compute an upper bound for the rank. More specifically, performing a $2$-descent computes the rank of the $2$-Selmer group of the Jacobian, which is necessarily at least the rank of the Jacobian. If the curve $C$ has a $k$-rational point, then the Abel-Jacobi map $j$ embeds $C$ into its Jacobian $J_C$. A two-cover descent \cite{bruin2009twocover} on $C$ identifies a subset of the $2$-Selmer group of $J_C$ which contains those classes in the image of $C(k) \overtext{j}{\ra} J_C(k) \overtext{\delta}{\ra} H^1(k, J_C[2])$.
	
	For the family of hyperelliptic curves over $k$ with a marked rational Weierstrass point, whose models are given by
		\[
			C\: y^2 = x^{2n+1} + a_{2n}x^{2n} + \ldots + a_1 x + a_0
		\] 
	 it was shown by Bhargava and Gross in \cite{bhargava2013average} that we can understand the average size of the $2$-Selmer group of the Jacobian in a meaningful sense. One of the major insights was that the elements of the $2$-Selmer group naturally correspond to orbits of $\SO_{2n+1}(k)$ acting by conjugation on the linear space $\{A \in \mathfrak{sl}_{2n+1}k : A = -A^T \}$ \cite{bhargava2013average}, \cite[Section 2.2, Example]{thorne2013vinberg}. Using a representation of $\SL_n k$ on $k^2 \otimes \Sym^n k$, one can understand the average behaviour of $k$-rational points on general hyperelliptic curves \cite{bhargava2017positive}. 
	
	The goal of arithmetic invariant theory is to describe the arithmetic of a family of algebraic or geometric objects in terms of the space of $k$-rational orbits of a representation of an algebraic group. In recent years it has been extremely successful in understanding the average behaviour of $k$-rational points on certain curves and abelian varieties \cite{bhargava2013average, bhargava2017positive, bhargava2015elliptic, bhargava2017average5selmer, rains2017invariant, romano2017singularities, wang2013pencils}. The results for hyperelliptic curves extend to a connection between the adjoint  representations of groups of type $A,D,E$ and specifically identified families of curves \cite{thorne2013vinberg, romano2017singularities} which are constructed from the invariants of these representations. Thorne has also demonstrated that the adjoint form of $E_6$ or $E_7$ acting on itself via conjugation gives rise to an understanding of $2$-descents on plane quartic curves \cite{thorne2016arithmetic}.
	
	The purpose of this paper is to establish a connection between the split adjoint simple group of type $E_8$ acting on itself via conjugation and the family of uniquely trigonal genus $4$ curves with a ramified fibre defined over $k$. Our method of argument closely follows \cite{thorne2016arithmetic}. A large part of our work is adapting many of the results for plane quartic curves that \cite{thorne2016arithmetic} relies upon to apply to uniquely trigonal genus $4$ curves. Additionally, there are some minor technical differences between $E_7$ and $E_8$ that we need to take into account.

	\begin{remark}
		The family of affine plane curves in \cite{romano2017singularities} given by
			\begin{equation} \label{eqn:tho15fam}
				y^3	= x^5 +	y(c_2 x^3 + c_8 x^2 + c_{14} x + c_{20}) + c_{12} x^3 + c_{18} x^2 + c_{24} x + c_{30} \tag{$\star$}
			\end{equation}
		is in fact a family of uniquely trigonal genus $4$ curves. We point out that our construction treats a more general curve as  the trigonal morphism $\pi\: (y,x) \ra (x:1)$ will always have a totally ramified fibre over the point at infinity for any curve of the form (\ref{eqn:tho15fam}). Our procedure naturally identifies how the family in \cite{romano2017singularities} connects to the general case. Specifically, the family of pairs $(C,P)$ with $C$ a curve in (\ref{eqn:tho15fam}) and $P$ the totally ramified point on $C$ at infinity is treated in the $\mathfrak{e}_8$ case of our argument. 	Moreover, if $C/k$ is a uniquely trigonal curve of the form (\ref{eqn:tho15fam}), our results augment \cite[Theorem 2.10]{romano2017singularities} by constructing an orbit for every class in $J_C(k)/2$, as opposed to just those classes in the image of the Abel-Jacobi map, i.e, those classes relevant to a two-cover descent. Unlike \cite{romano2017singularities}, we do not attempt to calculate the average size of the Selmer group of the Jacobian of a uniquely trigonal genus $4$ curve with a marked ramified fibre. We hope that this is the topic of a future paper.
	\end{remark}

	We describe the layout of the paper. We fix notation in Subsection \ref{subsec: notation}. Our main results are stated in Subsection \ref{subsec: main statements}. In Section \ref{sec: Thorne review} we give a brief summary of the techniques introduced in \cite{thorne2016arithmetic}. We state the background results we will need from Lie groups in Section \ref{sec: Background A,D,E} and describe the family of uniquely trigonal genus four curves in Section \ref{sec: uniquely trigonal locus}. In Section \ref{sec: uniquely trigonal locus} we also prove Theorem \ref{conj: e8 3.4}. In Section \ref{sec: Reeder calculation} we perform some technical calculations. Finally, we prove Theorem \ref{conj: e8 3.5} and Theorem \ref{conj: e8 3.6} in Section \ref{sec: Construction of orbits}.

	\subsection{Notation and conventions} \label{subsec: notation}

	Throughout we shall assume that $k$ is a field of characteristic $0$. The separable closure of $k$ will be denoted by $k^\sep$. If $V$ is a variety defined over $k$ we let $V_{k^\sep} := V \times_k k^\sep$. We implicitly mean $k$-rational whenever the term rational is used. We use the standard terminology from the theory of Lie groups and Lie algebras, which can be found in \cite{fulton1991representation} or \cite[Chapter 7, Section 5]{bourbaki1975lie}. Generally, $H$ will denote a split adjoint simple group of type $A$, $D$, or $E$ over $k$ and $T$ will denote a split maximal subtorus of $H$. Additionally, $\Lambda$ will denote the character lattice of $T$ and $W_T$ will denote the Weyl group associated to $\Lambda$, acting on $T$. If $H$ is a Lie group, then we will denote by $\mathfrak{h}$ its Lie algebra. 
	
	We let $\catname{Set}$ be the category of sets and let ${\catname{ k\mhyphen alg}}$ be the category of $k$-algebras. In this article a \emph{moduli functor} is a functor from ${\catname{ k\mhyphen alg}}$ to ${\catname{Set}}$. We denote the moduli functor for smooth uniquely trigonal genus $4$ curves by $\mathcal{T}_4$. We denote the moduli functor for smooth uniquely trigonal genus 4 curves, together with a marked ramification point, simple ramification point, or totally ramified point by $\mcT_4^\ram$, $\mcT_4^\sram$, or $\mcT_4^\tram$ respectively. We denote the moduli functor for smooth genus $3$ curves by $\mathcal{M}_3$, and the moduli functor for pointed smooth genus $3$ curves by $\mathcal{M}_3^1$. i.e, $\mathcal{M}_3^1$ is the moduli functor of smooth genus $3$ curves together with a marked point. If $\mathcal{F}$ is one of the aforementioned moduli functors of curves then we denote by $\mathcal{F}(2)$ the moduli functor of curves parametrized by $\mathcal{F}$ with marked $2$-torsion of their Jacobian variety.
	
	A semisimple Lie algebra $\mathfrak{h}$ defined over $k$ is \emph{split} over $k$ if it has a splitting Cartan subalgebra $\mathfrak{t}$ defined over $k$. A Cartan subalgebra $\mathfrak{t} \inj \mathfrak{h}$ is \emph{splitting} (over $k$) if $\ad_\mathfrak{h} t$ is diagonalizable over $k$ for all $t \in \mathfrak{t}$. A Lie group is \emph{split} over $k$ if its Lie algebra is split over $k$. For a semisimple Lie group $H$, an element $x \in H$ is \emph{regular} if its centralizer in $H$ has minimal dimension. Similarly, for a semisimple Lie algebra $\mathfrak{h}$, an element $x \in \mathfrak{h}$ is \emph{regular} if its centralizer in $\mathfrak{h}$ has minimal dimension. If $G$ is a semisimple Lie group we denote the open subset of regular semisimple elements by $G^\rss$. We make a similar definition for $\mathfrak{g}^\rss$. The regular semi-simple points on a maximal subtorus $T$ of $H$ are exactly those not lying on a root hyperplane.

	If $\theta\: H \ra H$ is an involution of an algebraic group $H$, we define $H^\theta$ to be the subvariety of $H$ fixed by $\theta$ and define $H^{\theta(h) = h^{-1}}$ to be the subvariety of $H$ defined by the equation $\theta(h) = h^{-1}$. We write $H^\circ$ for the connected component of the identity of $H$. If $\mathfrak{h}$ is a split semisimple Lie algebra over $k$ of type $E_n$, then a split adjoint simple group of type $E_n$ over $k$ is any Lie group isomorphic over $k$ to $\Aut(\mathfrak{h})^\circ$ (c.f \cite[page 101 and Proposition D.40]{fulton1991representation}).
  
  	If $X/k$ is a scheme acted upon by a reductive group $G/k$, then we denote the categorical quotient of $X$ by $G$, with $X$ interpreted as functor from $\catname{ k\mhyphen alg}$ to ${\catname{Set}}$, by $X \sslash G$ \cite{mumford1994git}.
  
	\subsection{Statement of our main results} \label{subsec: main statements}
	
	Let $C/k$ be a uniquely trigonal genus $4$ curve and let $P \in C(k)$ be a ramification point of the unique (up to $\PGL_2$) morphism $\pi\: C \ra \mbp^1$ of degree $3$. We split our analysis into two cases:
	
	\begin{enumerate}[\leftmarginhack]
		\item[ Case $E_8$:] The ramification index of $P$ with respect to $\pi$ is $2$ (the generic case, where $P$ is a simple ramification point).
		
		\item[ Case $\mathfrak{e}_8$:] The ramification index of $P$ with respect to $\pi$ is $3$ ($P$ is a totally ramified point of $\pi$).
	\end{enumerate}
	
	As in ~\cite{thorne2016arithmetic}, the names indicate the Lie group or Lie algebra used to construct the appropriate orbit spaces. We prove the direct analogues of Thorne's results.

	\begin{theorem} \label{conj: e8 3.4}
		\
		\begin{enumerate}
			\item[$E_8\:$]
			If $k=k^\sep$, then there is a bijection
			\[
			\mcT^\sram(k) \ra (T^\rss \sslash W_T)(k)
			\]
			with $\Lambda$ the root lattice of type $E_8$ and $T = \Hom(\Lambda, \Gm) $ the split torus of rank $8$.
			
			\item[$\mathfrak{e}_8\:$]
			If $k=k^\sep$, then there is a bijection
			\[
			\mcT^\tram(k) \ra (\mbp \mathfrak{t}^\rss \sslash W_T)(k)
			\]
			with $\mathfrak{t}$ the Lie algebra of the torus $T$.
		\end{enumerate}
	\end{theorem}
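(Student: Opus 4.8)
The plan is to realise both bijections concretely, by routing through an explicit Weierstrass-type normal form for a pointed uniquely trigonal genus~$4$ curve and matching the resulting space of normalised coefficients, together with its residual symmetries, against the quotient on the right-hand side. Since the unique $g^1_3$ makes $\pi$ canonical up to $\PGL_2$, marking $\pi(P) = \infty$ is a well-defined partial rigidification; and since $k = k^\sep$ there are no arithmetic subtleties, so we are really comparing the underlying sets of geometric isomorphism classes and may ignore automorphisms of individual pairs. In each case the map sends $(C,P)$ to the class of its normalised coefficients, and the substance is: (i) this class is independent of the normalisation made and determines $(C,P)$ up to isomorphism; (ii) every class off the discriminant locus arises from a smooth curve of the prescribed type; and (iii) the discriminant locus corresponds to $\mathfrak t \setminus \mathfrak t^\rss$, respectively $T \setminus T^\rss$.

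I would do the $\mathfrak e_8$ case first. Let $\pi^*(\pi(P)) = 3P$. Choosing an affine coordinate $x$ with $\pi(P) = \infty$ and analysing $\pi_*\mathcal O_C$ by Riemann--Roch --- the Tschirnhausen/Maroni presentation of a trigonal curve as a divisor on a $\mathbb P^1$-bundle over $\mathbb P^1$ --- one shows $C$ has an affine model of the shape \eqref{eqn:tho15fam} with exactly the $x$-degree bounds displayed there; triple ramification at $P$ forces the leading monomial $x^5$ and confines the remaining coordinate freedom to $x \mapsto ax + b$ together with an admissible substitution in $y$. Using the translation in $x$ and the shear in $y$ to delete the $x^4$- and $y^2$-terms, what survives is a single $\Gm$ acting on $(c_2, c_8, c_{12}, c_{14}, c_{18}, c_{20}, c_{24}, c_{30})$ with weights proportional to $(2,8,12,14,18,20,24,30)$ --- precisely the degrees of the fundamental invariants of the Weyl group of type $E_8$. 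By Chevalley's theorem $k[\mathfrak t]^{W_T}$ is the weighted polynomial ring on generators of those degrees, so $\Spec k[c_2, \dots, c_{30}]$ with its $\Gm$-action is identified (on coarse quotients) with $\mathfrak t \sslash W_T$, the singular-curve hypersurface of \eqref{eqn:tho15fam} matching the image of $\mathfrak t \setminus \mathfrak t^\rss$. Passing to the $\Gm$-quotient then gives $\mcT^\tram(k) \ra (\mbp\mathfrak t^\rss \sslash W_T)(k)$: injectivity and well-definedness say the $\Gm$-orbit of coefficients is a complete isomorphism invariant (the normal form has no further symmetries), and surjectivity is the direct verification that a coefficient vector with non-vanishing discriminant defines a smooth curve with a rational totally ramified fibre.

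For the $E_8$ case, $\pi^*(\pi(P)) = 2P + Q$ with $Q \neq P$, so the fibre over $\pi(P) = \infty$ carries two distinct points; the same kind of analysis applies, but the leading behaviour at infinity now records $P$ and $Q$ separately, which allows one to normalise away the scaling that remained in the $\mathfrak e_8$ case. One is left with an eight-parameter family whose natural parameter space is an algebraic torus with finite residual symmetry, and the task is to identify this torus with $T = \Hom(\Lambda, \Gm)$ in such a way that the ring of residual invariants becomes $k[T]^{W_T}$ --- a polynomial ring on the eight fundamental characters, since the root and weight lattices of $E_8$ coincide, so Steinberg's theorem applies --- with $T^\rss$ cut out by smoothness of $C$. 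This multiplicative picture is the ``group-level'' counterpart of the ``Lie-algebra-level'' $\mathfrak e_8$ picture: the family \eqref{eqn:tho15fam} is the degeneration of the $E_8$ family in which $Q$ collides with $P$, mirroring the specialisation of $T \sslash W_T$ near the identity to its tangent cone $\mathfrak t \sslash W_T$, which is also why one case is projectivised and the other is not. It is consistent with the dimension count: $\dim \mcT^\sram = \dim \mathcal{T}_4 = 8 = \operatorname{rank} E_8$, whereas $\dim \mcT^\tram = 7$ because carrying a rational totally ramified fibre is one condition.

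The substantial steps --- where I expect the real work to be, and which take up the bulk of Section~\ref{sec: uniquely trigonal locus} --- are pinning down the residual coordinate-change group \emph{precisely}, so that the quotient comes out as stated and not coarser, and, above all in the $E_8$ case, recognising the parameter space as a torus and matching the singular-curve locus with the discriminant (root-hyperplane) configuration of $E_8$. That last point is the only genuinely $E_8$-specific input: it is governed by the explicit invariant theory of the Weyl group of $E_8$ and, in the background, by the geometry of degree-$1$ del Pezzo surfaces (as in \cite{romano2017singularities}), whose $240$ lines carry the $E_8$ root system. Throughout, the proof follows closely the treatment of plane quartics and $E_7$ in \cite{thorne2016arithmetic}, which supplies the template for each step.
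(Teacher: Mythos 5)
Your proposal takes a genuinely different route from the paper, and while the overall shape of the idea is reasonable, as written it has a serious gap precisely at the point you flag as ``the task'': the paper's argument is designed to avoid that gap, and you have not supplied a substitute.

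The paper does not use a Weierstrass normal form at all. It proves Theorem~\ref{conj: e8 3.4} by passing through degree~$1$ del Pezzo surfaces. Via Theorem~\ref{thm: uniquely trigonal genus 4}, a uniquely trigonal genus~$4$ curve $C$ is the Bertini branch curve of a unique (over $k^\sep$) degree~$1$ del Pezzo surface $S$, and a ramification point of the trigonal map corresponds to a singular anticanonical section $D\subset S$ --- nodal when $P$ is simply ramified, cuspidal when $P$ is totally ramified. The torus $T=\Hom(\Lambda,\Gm)$ then appears for a structural reason: $\Lambda\iso(\Pic S_{k^\sep})^\perp$, $\Pic^0(D)\iso\Gm$ when $D$ is nodal, and restriction $\Pic(S_{k^\sep})^\perp\to\Pic^0(D)$ is literally a point of $T$, well-defined up to $W_{E_8}$ and inversion (Remark~\ref{rem: assignment-map}). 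Proposition~\ref{prop: torusE8} (an $E_8$ analogue of \cite[Prop.~1.8]{looijenga1993cohomology}) shows conversely that a point $\chi\in T^\rss$ determines $8$ points in general position on a fixed nodal cubic, whose blow-up yields $(S,D)$; the root-hyperplane conditions on $\chi$ translate into the classical general-position conditions. The cuspidal case replaces $\Gm$ by $\Ga$ and gives Proposition~\ref{prop: addtorusE8}, with the projectivization absorbing $\Aut(\Ga)=\Gm$.

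Against this, your plan for the $\mathfrak{e}_8$ case is essentially the route of \cite{romano2017singularities}: normalise to the form~\eqref{eqn:tho15fam}, observe the weights $(2,8,12,14,18,20,24,30)$ match the fundamental invariant degrees of $W_{E_8}$, and invoke Chevalley. But matching degrees is necessary, not sufficient; one still needs an actual isomorphism between the weighted affine space of coefficients and $\mathfrak t\sslash W_T$ compatible with the discriminant loci. In \cite{romano2017singularities} this is supplied by a Kostant section for the graded $\mathfrak e_8$ representation and its link to the $E_8$ surface singularity, neither of which you set up. Your $E_8$ case is in worse shape: you assert that after normalisation ``one is left with an eight-parameter family whose natural parameter space is an algebraic torus with finite residual symmetry,'' but you give no reason why a torus should appear from a coefficient count, and none is visible --- the multiplicative structure is not a feature of the coefficient space of a normal form; it is a feature of $\Pic^0$ of a nodal curve. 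That is exactly what the del Pezzo construction provides and what your proposal is missing. Without it, the $E_8$ half of the theorem is unproved.

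In short: the del Pezzo / Looijenga mechanism is not an optional gloss here but the engine that produces $T$ (resp.\ $\mathfrak t$) and the $W_{E_8}$-action canonically; a normal-form argument would have to rebuild that identification from scratch, and your proposal does not.
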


	\begin{theorem} \label{conj: e8 3.5}
		\
		\begin{enumerate}
			\item[$E_8\:$] 
			Let $H$ be the split adjoint simple group of type $E_8$ over $k$, let $\theta$ be an involution of $H$ satisfying the conditions of Proposition \ref{prop: thorne involution}, and let $X := (H^{\theta(h)=h^{-1}})^\circ$ be the theta inverted subvariety. Let $G := (H^\theta)^\circ$. Let $X^\rss$ be the open subset of regular semi-simple elements. Then the assignment $(C,P) \mapsto \kappa_C$ of Theorem \ref{conj: e8 3.4}-$E_8$ determines a map
			\[
			\mcT^\sram(k) \ra G(k) \bs X^\rss(k).
			\]
			If $k=k^\sep$, then this map is a bijection.	
			\item[$\mathfrak{e}_8\:$] 
			Let $H$ be as above, let $X := \mathfrak{h}^{d\theta=-1}$ be the Lie algebra of $(H^{\theta(h)=h^{-1}})^\circ$, and let $X^\rss$ be the open subset of regular semi-simple elements. Then the assignment $(C,P) \mapsto \kappa_C$ of Theorem \ref{conj: e8 3.4}-$\mathfrak{e}_8$ determines a map
			\[
			\mcT^\tram(k) \ra G(k) \bs \mbp X^\rss(k).
			\]
			If $k=k^\sep$, then this map is a bijection.
		\end{enumerate}
	\end{theorem}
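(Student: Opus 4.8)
The plan is to follow the strategy of \cite{thorne2016arithmetic} closely, building the map in two stages: first producing, for each $(C,P)$ and each class $\kappa_C \in J_C(k)/2$, a well-defined $G(k)$-orbit on $X^\rss(k)$ (resp.\ $\mbp X^\rss(k)$), and then showing this assignment is bijective when $k = k^\sep$. The starting point is Theorem \ref{conj: e8 3.4}, which over $k^\sep$ identifies $\mcT^\sram(k)$ with $(T^\rss \sslash W_T)(k)$ (resp.\ $\mcT^\tram(k)$ with $(\mbp\mathfrak{t}^\rss \sslash W_T)(k)$). The geometric input we need is that the Stein–Kostant–type quotient map $X^\rss \to X^\rss \sslash G$ is identified, via the involution $\theta$ from Proposition \ref{prop: thorne involution}, with $T^\rss \sslash W_T$ (resp.\ $\mbp\mathfrak{t}^\rss \sslash W_T$); this is exactly where the hypotheses on $\theta$ — that $G = (H^\theta)^\circ$ has the prescribed type and that a maximal torus of $X$ maps onto $T$ with the stabilizer group being $W_T$ — are used. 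Thus every point of $\mcT^\sram(k)$ determines, through its invariants, a $G$-orbit on $X^\rss$ defined over $k$, i.e.\ a fiber of $X^\rss(k) \to (X^\rss \sslash G)(k)$, and the $2$-torsion class $\kappa_C$ will be used to single out one rational point in that fiber.

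For the first stage (constructing the map over a general $k$), I would proceed as follows. Fix $(C,P)$ with invariants lying in $(T^\rss \sslash W_T)(k)$, giving a closed $G$-orbit $\mcO \subset X^\rss$ defined over $k$. The rational points $\mcO(k)$, when nonempty, form a torsor under the component group of the stabilizer, and the arithmetic content is the identification of this stabilizer scheme with $J_C[2]$ — this is the analogue of the plane-quartic computation in \cite{thorne2016arithmetic} and uses the Reeder-type calculation referenced in Section \ref{sec: Reeder calculation}: the centralizer in $G$ of a regular semisimple $x \in X^\rss$ is a maximal torus $A \subset G$, and one computes $A[2]$ (resp.\ the relevant component group in the $\mathfrak{e}_8$ case) to be canonically $J_C[2]$, compatibly with Galois. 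Given this, the set of $G(k)$-orbits mapping to the fixed invariant point is a principal homogeneous space under $H^1(k, J_C[2])$, and one checks that the subset hit by actual rational orbits is a coset; the class $\kappa_C \in J_C(k)/2 \hookrightarrow H^1(k,J_C[2])$ then selects the orbit $[\kappa_C]$. Well-definedness (independence of auxiliary choices, e.g.\ the choice of $\theta$ within its conjugacy class, or of representatives) is a cocycle-chasing verification. The projective case $\mathfrak{e}_8$ is handled identically after passing to $\mbp X^\rss$ and $\mbp\mathfrak{t}^\rss$, the projectivization being forced because the $\Gm$-scaling on $\mathfrak{h}$ has no analogue preserving the curve data — only the $W_T$-orbit of the line matters.

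For the second stage (bijectivity when $k = k^\sep$): surjectivity follows because over a separably closed field $H^1(k, J_C[2]) = 0$, so every $G(k)$-orbit on $X^\rss(k)$ has some invariant in $(X^\rss \sslash G)(k) = (T^\rss\sslash W_T)(k)$, which by Theorem \ref{conj: e8 3.4} comes from a pair $(C,P)$, and every such orbit is the distinguished one. Injectivity follows from the same vanishing: two pairs giving the same orbit have the same invariants, hence are isomorphic by Theorem \ref{conj: e8 3.4}, and the labeling by $\kappa_C$ is then forced to agree. The main obstacle I expect is the stabilizer computation identifying $Z_G(x)[2] \cong J_C[2]$ Galois-equivariantly for the family of uniquely trigonal genus $4$ curves — i.e.\ adapting the plane-quartic arguments of \cite{thorne2016arithmetic} and the Lie-theoretic bookkeeping (especially the minor $E_8$-versus-$E_7$ discrepancies in the component groups and in whether $X$ is connected) to the trigonal setting; once that is in place, the descent/torsor formalism is routine. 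A secondary subtlety is verifying that the involution $\theta$ with the required properties exists and is unique up to $H(k)$-conjugacy, so that $G$ and $X$ are genuinely independent of $(C,P)$ as claimed in the abstract; this should come from Proposition \ref{prop: thorne involution} together with the classification of involutions of $E_8$.
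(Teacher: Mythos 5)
Your proposal misidentifies the object $\kappa_C$: in Theorem \ref{conj: e8 3.5} it is the point of $T^\rss \sslash W_T$ (respectively $\mbp\mathfrak{t}^\rss\sslash W_T$) attached to $(C,P)$ by Theorem \ref{conj: e8 3.4} and Remark \ref{rem: assignment-map}, not a class in $J_C(k)/2$. As a consequence, what you describe is in substance the content of Theorem \ref{conj: e8 3.6} (where one varies a $2$-torsion class $[A] \in J_C(k)/2$ and uses $H^1(k,J_C[2])$-twisting to parametrize the fibres of $G(k)\backslash X_x(k)$), not Theorem \ref{conj: e8 3.5}. Theorem \ref{conj: e8 3.5} asks for a single canonical $G(k)$-orbit from the pair $(C,P)$ alone; you cannot get this from "pick a coset in a torsor" without saying where the basepoint comes from.

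The actual mechanism in the paper, which your proposal never touches, is Lurie's construction (Section \ref{sec: Lurie}). From $(C,P)$ one builds a quadruplet $(\Lambda,\wtilde V, W,\rho)$: $\Lambda$ is the $E_8$ lattice with its Galois action from $\Pic(S_{k^\sep})^\perp$, $\wtilde V$ is an extension of $V=\Lambda/2\Lambda$ by $\{\pm 1\}$ cut out of the Heisenberg group $\wtilde H_{\mathcal{L}}$ for $\mathcal{L}$ twice a theta divisor of $\Pic^0(C)$, and $W = H^0(\Pic^0(C),\mathcal{L})$ is a $16$-dimensional space on which $\wtilde V$ acts. Feeding this through Theorem \ref{thm: Lurie construction} produces a model tuple $(H_0,\theta_0,T_0,\rho_0)$ with $T_0$ canonically isomorphic to $\Hom(\Lambda,\mbg_m)$, and one then argues via Lemma \ref{lem: h^theta is D8} that $H_0$ is split. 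The crucial step is the existence of an isomorphism $\varphi\colon H\to H_0$ intertwining the involutions, unique up to $G(k)$-conjugacy; this uniqueness is exactly where Corollary \ref{cor: component counting} (connectedness of $H^\theta$ for $E_8$, in contrast with $E_7$) is used. The orbit is then $G(k)\cdot\varphi^{-1}(\kappa_C)$. You mention being aware of the $E_7$ versus $E_8$ component-group discrepancy, but you do not explain its role, and you do not set up the transport-of-structure that makes the orbit well-defined over a general $k$. Your sketch of bijectivity over $k^\sep$ is broadly reasonable (it follows from Theorem \ref{conj: e8 3.4} together with Theorem \ref{thm: thorne1.11}(b)), but the core construction of the map is missing.
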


	\begin{theorem} \label{conj: e8 3.6}
		\
		\begin{enumerate}
			\item[$E_8\:$] 
			Fix an $x = (C,P) \in \mcT^\sram(k)$ and by abuse of notation we denote by $x$ the image of $(C,P)$ in $T \sslash W_T$. Let $\pi\: X \ra X \sslash G$ denote the natural quotient map, where $X$ is as in Theorem \ref{conj: e8 3.5}-$E_8$. Note that $X \sslash G$ is canonically isomorphic to $T \sslash W_T$. Let $X_x$ be the fibre of $\pi$ over $x$ and let $J_x = J_C$. Then there is a canonical injective map
			\[
			\frac{J_x(k)}{2} \inj G(k) \bs X_x(k).
			\]
			\item[$\mathfrak{e}_8\:$] 
			Fix an $x = (C,P) \in \mcT^\tram(k)$ and by abuse of notation we denote by $x$ the image of $(C,P)$ in $\mbp \mathfrak{t} \sslash W_T$. Let $\pi\: \mbp X \ra \mbp X \sslash G$ denote the natural quotient map, where $X$ is as in Theorem \ref{conj: e8 3.5}-$\mathfrak{e}_8$. Let $\mbp X_x$ be the fibre of $\pi$ over $x$ and let $J_x = J_C$. Then there is a canonical injective map
			\[
			\frac{J_x(k)}{2} \inj G(k) \bs \mbp X_x(k).
			\]
		\end{enumerate}
	\end{theorem}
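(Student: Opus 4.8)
The plan is to follow the strategy of \cite{bhargava2013average} and \cite{thorne2016arithmetic}: we describe the orbit set $G(k)\bs X_x(k)$ by nonabelian Galois cohomology, identify the stabilizer scheme that appears with $J_x[2]$, and then deduce the statement from the injectivity of the $2$-descent map together with the vanishing of an obstruction class in $H^1(k,G)$. We spell out the $E_8$ case; the $\mathfrak{e}_8$ case is formally identical and is indicated at the end. Write $x=(C,P)$ with associated orbit $\kappa_C$. By Theorem \ref{conj: e8 3.5}-$E_8$ the orbit $\kappa_C\subseteq X^\rss(k)$ is nonempty and lies over $x$, so we may fix a representative $v_0\in X_x^\rss(k)$. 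By the computations of Section \ref{sec: Reeder calculation}, $Z_G(v_0)$ is a finite \'etale $k$-group scheme and there is a canonical isomorphism $Z_G(v_0)\cong J_x[2]$; this is the step where the identification---carried out in Sections \ref{sec: uniquely trigonal locus} and \ref{sec: Reeder calculation}---of the relevant root datum with the $2$-torsion of the Jacobian of the associated curve is used.

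By Theorem \ref{conj: e8 3.4}-$E_8$ together with the canonical isomorphism $X\sslash G\cong T\sslash W_T$, the base change of $X_x^\rss$ to $k^\sep$ is a single $G$-orbit, so (having the rational point $v_0$) the orbit map $g\mapsto g\cdot v_0$ identifies $X_x^\rss$ with the homogeneous space $G/Z_G(v_0)$ over $k$. The exact sequence of pointed sets attached to $1\to Z_G(v_0)\to G\to G/Z_G(v_0)\to 1$ then yields a canonical bijection
\[
G(k)\bs X_x^\rss(k)\;\xrightarrow{\ \sim\ }\;\ker\bigl(H^1(k,Z_G(v_0))\longrightarrow H^1(k,G)\bigr)
\]
sending the class of $v_0$ to the distinguished element; via $Z_G(v_0)\cong J_x[2]$ this reads $G(k)\bs X_x^\rss(k)\cong\ker\bigl(H^1(k,J_x[2])\to H^1(k,G)\bigr)$.

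Let $\delta\colon J_x(k)/2\hookrightarrow H^1(k,J_x[2])$ be the connecting map of the Kummer sequence $0\to J_x[2]\to J_x\xrightarrow{\timestwo}J_x\to 0$; it is injective and factors through $J_x(k)/2J_x(k)$. The crux of the proof is the inclusion $\operatorname{im}(\delta)\subseteq\ker\bigl(H^1(k,J_x[2])\to H^1(k,G)\bigr)$. The plan is to establish this as in \cite{bhargava2013average}: for each line bundle class $Q\in J_x(k)$ we construct, directly from the curve, a $k$-rational point of $X_x$ lying in the geometric orbit of $v_0$ whose class under the bijection above is $\delta(Q)$. Concretely, the construction underlying Theorems \ref{conj: e8 3.4}--\ref{conj: e8 3.5} takes as input $(C,P)$ together with the choice of an auxiliary line bundle on $C$ in a suitable $\Pic$-torsor, with $v_0$ the output of a distinguished choice; twisting this line bundle by a degree-$0$ rational line bundle moves the output within its geometric orbit, and a cocycle computation shows that the resulting change of class in $H^1(k,J_x[2])$ is exactly $\delta$ of the twisting bundle. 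The point so produced is $k$-rational by construction, which is precisely the asserted vanishing of the obstruction.

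Granting the preceding paragraph, the composite
\[
\frac{J_x(k)}{2}\xrightarrow{\ \delta\ }\ker\bigl(H^1(k,J_x[2])\to H^1(k,G)\bigr)\xrightarrow{\ \sim\ }G(k)\bs X_x^\rss(k)\hookrightarrow G(k)\bs X_x(k)
\]
is the required map; the last arrow is injective since being $G(k)$-conjugate does not depend on the ambient variety. The composite is injective because $\delta$ is injective and the middle map is a bijection, and it is canonical because the stabilizer isomorphism, the orbit--cohomology bijection and the Kummer connecting map are all canonical (the dependence on $v_0$ being only the evident translation). The $\mathfrak{e}_8$ case is proved in the same way, with $\mbp X_x$ in place of $X_x$, with $X=\mathfrak{h}^{d\theta=-1}$ the $\mbz/2$-graded piece of $\mathfrak{h}$ in place of the theta-inverted subvariety, with Theorems \ref{conj: e8 3.4}-$\mathfrak{e}_8$ and \ref{conj: e8 3.5}-$\mathfrak{e}_8$ in place of their $E_8$ analogues, and with the Lie-algebra version of the stabilizer computation of Section \ref{sec: Reeder calculation}; passing to $\mbp X_x$ absorbs the scaling action on this graded piece and matches the $\mbp\mathfrak{t}\sslash W_T$ appearing in Theorem \ref{conj: e8 3.4}-$\mathfrak{e}_8$. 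The main obstacle is the claim of the third paragraph: it is the only step that uses genuine geometry of the uniquely trigonal genus $4$ curve---the interaction of the auxiliary line bundles with the degree-$3$ morphism $\pi$ and the marked ramification point $P$---and adapting the corresponding construction of \cite{thorne2016arithmetic} to the present family is the technical heart of the argument.
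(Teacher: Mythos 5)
Your cohomological framework matches the paper's exactly: realize $G(k)\bs X_x(k)$ as $\ker\bigl(H^1(k,Z_G(v_0))\to H^1(k,G)\bigr)$ via the orbit--stabilizer bijection of Bhargava--Gross, identify $Z_G(v_0)\cong J_x[2]$ (via Proposition \ref{prop: del Pezzo 1} and \cite[Corollary 2.9]{thorne2013vinberg}), and then show that the Kummer coboundary $\delta$ factors through this kernel by producing a $k$-rational orbit representative for each class in $J_x(k)/2$. However, you explicitly leave this last step---your third paragraph---as a ``plan'' rather than a proof, and it is precisely where the paper's argument does its substantive work. The paper fills the gap by running Lurie's construction on the theta groups $\heisenberg{\mathcal{L}}$ and $\heisenberg{\mathcal{L}_B}$ of $C$, where $\mathcal{L}_B=\tau_B^*\mathcal{L}$ and $B\in J_C(k^\sep)$ satisfies $[2]B=A$; Lemma \ref{lem: Lurie-and-descent} shows the resulting isomorphism $F$ of Lurie tuples over $k^\sep$ has cocycle $\sigma\mapsto F^{-1}F^\sigma$ equal to $[B^\sigma-B]=\delta(A)$, and $\varphi_B^{-1}(\kappa_C^B)\in X_x(k)$ is then the desired rational representative. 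As written, your proposal is a correct strategy with the central construction missing.

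Two imprecisions in your sketch of that construction are also worth flagging. First, the twist is not by the rational class $A$: one translates $\mathcal{L}$, a bundle on $\Pic^0(C)$ rather than on $C$, by a non-rational half $B$ of $A$, and it is precisely the failure of $B$ to be rational that produces the nontrivial cocycle $\delta(A)$; translating by a rational class gives a trivial cocycle. Second, the marked ramification point $P$ and the trigonal morphism $\pi$ play no role in the cocycle computation itself---$P$ only serves to pin down the specific $\kappa_C$ lying over $x$, while the genus-$4$/del Pezzo geometry already entered through the surjection $\Lambda/2\Lambda\ra J_x[2]$ of Proposition \ref{prop: del Pezzo 1}. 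So your characterization of the ``technical heart'' as the interaction of the auxiliary line bundle with $\pi$ and $P$ mislocates where the geometric input actually enters.
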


\section{A summary of Thorne's construction of orbits} \label{sec: Thorne review}

	In this section, we give a summary of Thorne's construction of orbits since we will ultimately be following the same overarching strategy. The identification of pointed plane quartic curves to orbits is split into four cases depending on the behaviour of the tangent line at $P$ to the curve. These are:
	
	\begin{enumerate}[\leftmarginhack]
		\item[Case $E_7$:] The tangent line to $P$ meets $C$ at exactly 3 points (the generic case).
		
		\item[Case $\mathfrak{e}_7$:] The tangent line to $P$ meets $C$ at exactly 2 points, with contact order 3 at $P$ (i.e $P$ is a flex).

		\item[Case $E_6$:] The tangent line to $P$ meets $C$ at exactly 2 points, with contact order 2 at $P$ (i.e $P$ lies on a bitangent).

		\item[Case $\mathfrak{e}_6$:] The tangent line to $P$ meets $C$ at exactly 1 point, with contact order 4 (i.e $P$ is a hyper-flex)			
	\end{enumerate} 
	
	The name of each case refers to the semisimple algebraic Lie group, or semisimple algebraic Lie algebra used to construct the orbit space which parametrizes points of the noted type on plane quartic curves. 

	\bigskip

	For simplicity we describe the argument of ~\cite{thorne2016arithmetic} for the $E_7$ case only, but we note that the other cases are treated similarly. The argument  is guided along three milestone theorems. We list these as Theorem \ref{thm: thorne3.4}, Theorem \ref{thm: thorne3.5}, and Theorem \ref{thm: thorne3.6} and outline the arguments used in proving them. Let $\mathcal{S}$ be the subfunctor of $\mathcal{M}_3^1$ parameterizing non-hyperelliptic pointed genus $3$ curves, whose marked point $P$ in the canonical model is not a flex and does not lie on a bitangent.
	
	\begin{theorem}[{\cite[Theorem 3.4]{thorne2016arithmetic}}] \label{thm: thorne3.4}
		If $k=k^\sep$, then there is a bijection
		\[
			\mcS(k) \ra (T^\rss \sslash W_T)(k)
		\]
		with $\Lambda$ the root lattice of type $E_7$ and $T = \Hom(\Lambda, \Gm) $ the split torus of rank $7$.
	\end{theorem}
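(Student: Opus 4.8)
The plan is to identify both sides with one and the same explicit rational variety over $k^\sep$. Recall first the shape of the right-hand side: $k[\Lambda]^{W_T}$ is the ring of invariants of the finite reflection group $W_T = W(E_7)$, so $T \sslash W_T$ is an affine variety of dimension $7$, and $T^\rss \sslash W_T$ is the complement of the discriminant hypersurface $\{\Delta = 0\}$, with $\Delta$ the descent to the quotient of $\prod_{\alpha>0}\alpha^2$; over $k = k^\sep$ its points are exactly the $W_T$-orbits of regular elements (those off every root hyperplane) of $T(k)$. So it is enough to construct a natural transformation $\mcS \to T^\rss\sslash W_T$ and prove it is bijective on $k^\sep$-points.

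On the curve side I would set up a normal form. Since $C$ is non-hyperelliptic of genus $3$ its canonical model is a smooth plane quartic, and the two hypotheses on $P$ are equivalent to saying that the tangent line $\ell_P$ meets $C$ in a divisor $2P + Q_1 + Q_2$ with $P, Q_1, Q_2$ pairwise distinct (a plane quartic meets any of its $28$ bitangents only at the two tangency points, so ``$P$ lies on a bitangent'' is the same as ``$\ell_P$ is tangent to $C$ a second time'', i.e. $Q_1 = Q_2$, while ``$P$ is a flex'' means $Q_i = P$ for some $i$). Choosing projective coordinates with $P = [0:0:1]$ and $\ell_P = \{y=0\}$ puts $C$ in the shape $F = yz^3 + z^2 a_2(x,y) + z\, a_3(x,y) + a_4(x,y)$; I would then exhaust the residual group fixing this flag (rescalings and the shears $z \mapsto z + \ell(x,y)$, $x \mapsto \alpha x + \beta y$) to kill further coefficients and reach a rigid normal form with exactly $7$ parameters. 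Smoothness of $C$ together with $Q_i \neq P$ and $Q_1 \neq Q_2$ then cuts out an open subscheme whose complement is a hypersurface $\{D=0\}$, realising $\mcS$ on $k^\sep$-points as an explicit open subset of $\mba^7$.

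It remains to match $(\mba^7, D)$ with $(T\sslash W_T, \Delta)$ by the \emph{correct} isomorphism. Here I would bring in the geometry that makes $E_7$ intrinsic: the degree-$2$ del Pezzo surface $S_C \to \mbp^2$, the double cover of $\mbp^2$ branched along $C$, whose Picard lattice is $\mbz^{1,7}$ with $K_{S_C}^{\perp}$ the $E_7$ root lattice and $W_T$ acting through lattice automorphisms; a geometric marking of $S_C$ is a $W_T$-torsor, and the point $P$ — which lifts to the unique point of the ramification curve $R \cong C$ fixed by the Geiser involution — provides exactly the rigidification needed to turn the $7$ normal-form parameters into a well-defined point of $T/W_T$, hence the desired morphism $\mcS \to T^\rss\sslash W_T$. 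One then checks the bad loci correspond: each degeneration ``$C$ acquires a node'', ``$P$ becomes a flex'', ``$\ell_P$ becomes a bitangent'' forces a $(-2)$- or $(-1)$-curve on $S_C$ to collide with the configuration pinned down by $P$, i.e. forces some root pairing to vanish, so $\{D=0\}$ maps into $\{\Delta=0\}$; the reverse containment follows because $E_7$ is simply-laced (all walls are $W_T$-conjugate), so a comparison of degrees and of irreducible components forces $D$ to agree with (a power of) $\Delta$. Bijectivity on $k^\sep$-points is then obtained by writing down the inverse, reconstructing $(C,P)$ from a marked del Pezzo surface of degree $2$. The main obstacle is exactly this third step: producing the \emph{canonical} identification $\mba^7 \xrightarrow{\ \sim\ } T\sslash W_T$ rather than merely an abstract isomorphism of graded polynomial rings, matching $D$ with $\Delta$, and — relatedly — confirming that the normalisation leaves no residual finite ambiguity so that the map really is bijective; the rest is careful but routine bookkeeping with the normal form.
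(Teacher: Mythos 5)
There is a genuine gap: you never actually construct the map, and the step you defer to the end is not a normalization detail but the entire content of the theorem. The proof used by Thorne and Looijenga (and mirrored by the paper's own Proposition~\ref{prop: torusE8} in the degree-$1$/$E_8$ case) is a specific, canonical recipe attaching to $(C,P) \in \mcS(k)$ a regular point of $T = \Hom(\Lambda, \Gm)$ up to $W_T$. Concretely: pass to the degree-$2$ del Pezzo surface $S \to \mbp^2$ branched along $C$, and observe that because $\ell_P$ meets $C$ in $2P + Q_1 + Q_2$ with $P, Q_1, Q_2$ distinct, the preimage $D \subset S$ of $\ell_P$ is an irreducible anticanonical curve of arithmetic genus $1$ with a single node lying over $P$. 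Restriction of divisor classes defines a homomorphism $(\Pic S_{k^\sep})^\perp \to \Pic^0(D) \iso \Gm$; choosing an identification $\Lambda \iso (\Pic S_{k^\sep})^\perp$ (well-defined up to $W_T$) and $\Pic^0(D) \iso \Gm$ (well-defined up to inversion, which lies in $W(E_7)$) produces the desired class in $(T \sslash W_T)(k)$, and regularity of this point is exactly equivalent to the blown-down points lying in general position, i.e.\ to the open conditions defining $\mcS$. Bijectivity comes from running this construction in reverse, starting from a nodal plane cubic and $\chi \in T^\rss$ as in Proposition~\ref{prop: torusE8}.

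Your substitute — a $7$-parameter normal form for $(C,P)$ plus matching $(\mba^7, D)$ to $(T\sslash W_T, \Delta)$ by comparing discriminant loci — cannot supply this. A dimension count together with ``walls go to walls'' produces at best \emph{some} isomorphism of affine varieties; it does not single out the canonical one, does not explain why the relevant finite group is $W(E_7)$, and does not give the functoriality that Theorems~\ref{conj: e8 3.5} and~\ref{conj: e8 3.6} rely on downstream. You flag this as ``the main obstacle,'' but without the torus construction via $\Pic^0$ of a nodal anticanonical section there is no way to close it. Two smaller inaccuracies: the Geiser involution fixes the entire ramification curve $R \iso C$, not a unique point of it (you are conflating this with the Bertini involution on a degree-$1$ del Pezzo, which fixes a curve plus an isolated point); and the claim that degree and component considerations force your $D$ to agree with a power of $\Delta$ is unjustified — in the actual proof the identification of discriminant loci falls out of the explicit construction rather than being inferred abstractly.
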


	Theorem \ref{thm: thorne3.4} is a reformulation of some results of ~\cite{looijenga1993cohomology}. The result is established by using a connection between non-hyperelliptic genus $4$ curves and del Pezzo surfaces of degree $2$. Theorem \ref{thm: thorne3.4} or \cite[Proposition 1.8]{looijenga1993cohomology} shows that $\mathcal{S}(2)$ is naturally isomorphic to an open subset of a rank $7 \ (= \dim \mathcal{M}_3^1)$ torus. This isomorphism arises from the relationship between smooth plane quartics and del Pezzo surfaces of degree $2$. Namely, every del Pezzo surface of degree $2$ is a double cover of $\mbp^2$ branched along a smooth plane quartic, and every smooth plane quartic arises in this way. The result in \cite[Proposition 1.8]{looijenga1993cohomology} shows that the additional data of an anti-canonical section of a degree $2$ del Pezzo surface corresponds (generically) to the data of a point on the associated plane quartic.
	 
	Thorne then takes advantage of the fact that del Pezzo surfaces of degree $2$ have a strong connection to split adjoint simple groups of type $E_7$ and uses classical results on Lie groups to strengthen Theorem \ref{thm: thorne3.4} to Theorem \ref{thm: thorne3.5} below. In particular, ~\cite[Theorem 1.11]{thorne2016arithmetic} gives an isomorphism $T^\rss \sslash W_T \iso X^\rss \sslash G$, and ~\cite[Section 2]{thorne2016arithmetic} provides a construction of the larger algebraic group using a root datum and auxiliary data. The final step of this argument is to show that this extra abstract data is supplied by the choice of plane quartic and point.

	\begin{theorem}[{\cite[Theorem 3.5]{thorne2016arithmetic}}] \label{thm: thorne3.5}
		Let $H$ be the split adjoint simple group of type $E_7$ over $k$, let $\theta$ be an involution of $H$ satisfying the conditions of Proposition \ref{prop: thorne involution}, and let $X := (H^{\theta(h)=h^{-1}})^\circ$ be the theta inverted subvariety. Let $G := (H^\theta)^\circ$. Let $X^\rss$ be the open subset of regular semi-simple elements. Then the assignment $(C,P) \mapsto \kappa_C$ of Theorem \ref{thm: thorne3.4} determines a map
		\[
			\mcS(k) \ra G(k) \bs X^\rss(k).
		\] 
	\end{theorem}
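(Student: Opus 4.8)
The plan follows \cite{thorne2016arithmetic} and factors the desired map through the degree $2$ del Pezzo surface attached to $(C,P)$ together with the comparison of quotients in \cite[Theorem 1.11]{thorne2016arithmetic}. Only the existence of a well-defined map is at stake here, not injectivity or surjectivity, so it suffices to produce, canonically and compatibly with base change, one $G(k)$-orbit on $X^\rss(k)$ for each isomorphism class in $\mcS(k)$.

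First, from $(C,P) \in \mcS(k)$ one extracts geometric data over $k$. Since $C$ is non-hyperelliptic, the canonical class embeds $C \hookrightarrow \mbp^2_k$, and the double cover $S \ra \mbp^2_k$ branched along $C$ is a del Pezzo surface of degree $2$; the hypothesis that $P$ is neither a flex nor on a bitangent is precisely the genericity condition of \cite[Proposition 1.8]{looijenga1993cohomology} under which $P$ determines a well-defined anticanonical section of $S$, equivalently a distinguished configuration of exceptional classes through the two points of $S$ over $P$. This data is the input of the assignment $(C,P) \mapsto \kappa_C$ of Theorem \ref{thm: thorne3.4}; although that theorem only asserts a \emph{bijection} when $k = k^\sep$, the underlying assignment is a morphism of functors defined over any $k$ and produces a point $\kappa_C \in (T^\rss \sslash W_T)(k)$. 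Composing with the canonical isomorphism $T^\rss \sslash W_T \iso X^\rss \sslash G$ of \cite[Theorem 1.11]{thorne2016arithmetic} yields a canonical point $\ol\kappa_C \in (X^\rss \sslash G)(k)$.

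Second — the crux — one must lift $\ol\kappa_C$ to a $G(k)$-orbit on $X^\rss(k)$. Over $k^\sep$ each fibre of the quotient map $X^\rss \ra X^\rss \sslash G$ above a regular semisimple point is a single $G$-orbit, with stabilizer a finite commutative group scheme killed by $2$ (using that $\theta$ is a stable involution satisfying Proposition \ref{prop: thorne involution}, so the stabilizer of $x \in X^\rss$ is the $2$-torsion of the unique maximal torus through $x$). Thus the obstruction is purely arithmetic: one must check that the fibre over $\ol\kappa_C$ has a $k$-point and pick one out canonically modulo $G(k)$, the ambiguity being governed by $H^1(k, G_x)$ for a point $x$ of the fibre. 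The decisive step is to show that the geometry of $S$ supplies exactly the auxiliary data required by the construction of \cite[Section 2]{thorne2016arithmetic}: the primitive lattice $K_S^\perp \ssq \Pic(S_{k^\sep})$, isomorphic to the $E_7$ root lattice, carries a $\Gamma_k$-action factoring through $W_T$, and the marked anticanonical section (equivalently $P$) supplies the remaining rigidifying data, so that feeding this into the construction of \cite[Section 2]{thorne2016arithmetic} produces, alongside the fixed $(H,\theta,G,X)$, a canonical $G(k)$-orbit $\kappa_C^{\mathrm{orb}}$ lying over $\ol\kappa_C$. In particular it is the marked rational point $P$ that guarantees the fibre is nonempty over $k$ and is canonically pointed.

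Finally, one verifies well-definedness: an isomorphism of pairs $(C,P) \cong (C',P')$ induces an isomorphism of the associated del Pezzo surfaces respecting the anticanonical sections, hence identifies $\kappa_C^{\mathrm{orb}}$ with $\kappa_{C'}^{\mathrm{orb}}$, and every construction above commutes with an extension of scalars $k \ssq k'$; therefore $(C,P) \mapsto \kappa_C^{\mathrm{orb}}$ is a well-defined map $\mcS(k) \ra G(k) \bs X^\rss(k)$, as asserted. I expect the main obstacle to be the second step — extracting a genuine \emph{rational} orbit from $S$ rather than merely its invariant $\ol\kappa_C$ — where the numerical coincidences between degree $2$ del Pezzo surfaces and $E_7$ (the Weyl action on $K_S^\perp$, the $56$ exceptional curves, and the roles of the anticanonical class and of $P$) are used essentially; the stronger assertions that this map is bijective over $k^\sep$ and the description of its fibres (cf.~Theorem \ref{thm: thorne3.6} and the analogous Theorem \ref{conj: e8 3.5}) require further input beyond Theorem \ref{thm: thorne3.5} itself.
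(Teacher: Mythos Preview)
Your overall architecture is right --- construct $\kappa_C \in (T^\rss \sslash W_T)(k)$ from the del Pezzo surface, invoke the isomorphism $T \sslash W_T \cong X \sslash G$, and then lift to a $G(k)$-orbit via the Lurie-type construction of \cite[Section 2]{thorne2016arithmetic}. But your account of the ``crux'' step misidentifies what the rigidifying data actually is, and this is a genuine gap.

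The input to Lurie's construction (Data I in Section \ref{sec: Lurie}) is not merely the lattice $K_S^\perp$ with its Galois action plus something coming from $P$. One needs in addition: (2) a specific central extension $\wtilde V$ of $V = \Lambda/2\Lambda$ by $\{\pm 1\}$, (3) a compatible Galois action on $\wtilde V$, and (4) a representation $W$ of $\wtilde V$. In Thorne's argument (and in the $E_8$ analogue proved in Section \ref{sec: Construction of orbits}) these come from the \emph{Heisenberg group} $\heisenberg{\mathcal{L}}$ of $\mathcal{L} = 2\Theta$ on $J_C$, together with its natural action on $W = H^0(J_C, \mathcal{L})$; one pulls back along $\gamma\: V \to J_C[2]$ and cuts out $\wtilde V$ as the kernel of an explicit quadratic character. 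This package is what Lurie's functor turns into a tuple $(H_0, \theta_0, T_0, \rho_0)$ defined over $k$, with $T_0$ canonically identified with $\Hom(\Lambda, \mbg_m)$. The marked point $P$ plays a different role: it singles out the element $\kappa_C \in T_0(k)$, i.e.\ \emph{which} point of the torus one is trying to assign an orbit to. So it is the theta divisor, not $P$, that ``guarantees the fibre is nonempty over $k$ and canonically pointed''; the paper makes this explicit in the paragraph preceding Theorem \ref{thm: thorne3.6}, noting that the auxiliary data depends on a chosen translate of $2W_{g-1}$.

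There is also a point specific to $E_7$ that you do not address. Proposition \ref{prop: thorne involution} gives $\varphi\: H \to H_0$ unique only up to $H^\theta(k)$-conjugacy, and for $E_7$ the group $H^\theta$ has two connected components (see Remark \ref{rem: E7 versus E8}). To get a well-defined $G(k) = (H^\theta)^\circ(k)$-orbit one must pin down the correct component, and it is precisely the representation $\rho_0$ on the $8$-dimensional space $W$ (in the $E_7$ case) that does this. Your sketch does not mention the representation at all, so as written it only produces an $H^\theta(k)$-orbit, not a $G(k)$-orbit.
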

		
	Finally, Theorem \ref{thm: thorne3.5} is twisted to become Theorem \ref{thm: thorne3.6}. The auxiliary data provided by a plane quartic and point $(C,P)$ in the proof of Theorem \ref{thm: thorne3.5} depends on a chosen translate of $2W_{g-1}$, where $W_{g-1}$ is a theta divisor of $J_C$. In turn, for a fixed curve $C$, the choices of non-equivalent translates of $2W_{g-1}$ over a field $k$ are parametrized by the choices of $[D] \in \frac{J_C(k)}{2}$. In Theorem \ref{thm: thorne3.5} this choice can be made systematically among all pointed plane quartics since $\frac{J_C(k)}{2}$ always contains the trivial class\footnote{As one is parameterizing plane quartic curves with a $k$-rational point, one could also systematically choose the class $[4P - \mathcal{K}_C]$ among all pointed curves, where here $\mathcal{K}_C$ is the canonical class of $C$.}. However, for a fixed pointed curve $(C,P)$ there are potentially many choices of a class in $\frac{J_C(k)}{2}$, and thus many choices of translates of $2W_{g-1}$. The $G(k^\sep)$-orbit assigned to $(C,P)$ decomposes into several $G(k)$-orbits, and there is an assignment of one of these $G(k)$-orbits to each choice of translate of $2W_{g-1}$. For the technical details one should consult \cite[Section 1]{thorne2016arithmetic} and the proof of Theorem \ref{thm: thorne3.6}.

	\begin{theorem}[{\cite[Theorem 3.6]{thorne2016arithmetic}}] \label{thm: thorne3.6}
		Fix an $x = (C,P) \in \mcS(k)$ and by abuse of notation we denote by $x$ the image of $(C,P)$ in $T \sslash W_T$. Let $\pi\: X \ra X \sslash G$ denote the natural quotient map. Note that $X \sslash G$ is canonically isomorphic to $T \sslash W_T$. Let $X_x$ be the fibre of $\pi$ over $x$. Then there is a canonical injection
			\[
				\frac{J_x(k)}{2} \inj G(k) \bs X_x(k)
			\]
		such that the image of $x$ in $G(k) \bs X(k)$ is the image of $[0] \in J_x(k)/2J_x(k)$. 
	\end{theorem}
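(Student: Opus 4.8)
The plan is to package Thorne's argument through Galois descent, and we will adapt it to the genus $4$ setting in Section~\ref{sec: Construction of orbits}. First I would record the shape of the fibres of $\pi$ over $k^\sep$. Combining the bijection of Theorem~\ref{thm: thorne3.4} with the isomorphism $X^\rss \sslash G \iso T^\rss \sslash W_T$ of \cite[Theorem~1.11]{thorne2016arithmetic}, and using the standard fact that for $\theta$ as above every regular semisimple fibre of $X^\rss \ra X^\rss \sslash G$ is a single $G(k^\sep)$-orbit, one sees that $X_x(k^\sep)$ is nonempty — it contains the point $\kappa := \kappa_C$ produced from $(C,P)$ by Theorem~\ref{thm: thorne3.5} — and homogeneous under $G(k^\sep)$. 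The usual twisting argument then furnishes a bijection of pointed sets
\[
	G(k) \bs X_x(k) \ \xrightarrow{\ \sim\ }\ \ker\!\big( H^1(k, Z_G(\kappa)) \ra H^1(k, G) \big),
\]
sending the $G(k)$-orbit of $\kappa$ to the distinguished element and the orbit of an arbitrary $\kappa' \in X_x(k)$ to the class of the cocycle $\sigma \mapsto g^{-1}\,\sigma(g)$, where $g \in G(k^\sep)$ satisfies $g\kappa g^{-1} = \kappa'$.

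Next I would pin down the stabiliser. Using the description of centralisers of regular semisimple elements in the $\theta$-inverted subvariety, together with the geometry of del Pezzo surfaces of degree $2$ underlying Theorems~\ref{thm: thorne3.4} and~\ref{thm: thorne3.5} — the $56$ exceptional curves, the anticanonical sections, and the $28$ odd theta characteristics of $C$ — one obtains a canonical isomorphism of finite étale $k$-group schemes
\[
	\phi \colon Z_G(\kappa) \ \xrightarrow{\ \sim\ }\ J_C[2].
\]
The subtlety to flag is that $\phi$ becomes canonical and $\Gal$-equivariant only once a $k$-rational translate of the double theta divisor $2W_{g-1}$ has been fixed — this is exactly the auxiliary datum appearing in the proof of Theorem~\ref{thm: thorne3.5} — and a different choice of translate twists $\phi$; it is this freedom that will account for the nontrivial classes of $J_C(k)/2$.

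Now I would bring in Kummer theory. The sequence $0 \ra J_C[2] \ra J_C \xrightarrow{2} J_C \ra 0$ produces the injective descent map $\delta \colon J_C(k)/2 \inj H^1(k, J_C[2])$, whose image is $\ker\!\big(H^1(k, J_C[2]) \ra H^1(k, J_C)\big)$. The theorem then reduces to the claim that $\phi_*^{-1} \circ \delta$ lands inside $\ker\!\big(H^1(k, Z_G(\kappa)) \ra H^1(k, G)\big)$: composing with the bijection of the first paragraph yields the asserted injection $J_C(k)/2 \inj G(k) \bs X_x(k)$, injectivity is inherited from $\delta$, and $[0] \mapsto \delta([0]) = 0$ is sent to the orbit of $\kappa$, which is the image of $x$ under Theorem~\ref{thm: thorne3.5}. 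For the containment I would argue geometrically: a class $[D] \in J_C(k)/2$ determines a $k$-rational translate of $2W_{g-1}$ (translate by any $k$-rational divisor representing $D$); running the construction of Theorem~\ref{thm: thorne3.5} with this translate in place of the canonical one produces a point $\kappa_D \in X_x(k)$, still $k$-rational and in the same $G(k^\sep)$-orbit as $\kappa$, so the difference class of $(\kappa, \kappa_D)$ in $H^1(k, Z_G(\kappa))$ automatically dies in $H^1(k, G)$. It therefore suffices to establish the identity $[(\kappa, \kappa_D)] = \phi_*^{-1}(\delta([D]))$.

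That identity is the crux, and I expect it to be the main obstacle. It is a $2$-torsion bookkeeping problem: one must trace, through the del Pezzo/theta-characteristic dictionary, how translating $2W_{g-1}$ by a $2$-torsion class perturbs the marking that cuts out $\kappa$, and then match the outcome against the Kummer coboundary. What makes it delicate is that translates of $2W_{g-1}$, odd theta characteristics, exceptional curves of the del Pezzo surface, and the points of the fibre $X_x$ each carry an a priori different $J_C[2]$-torsor structure, and the computation is precisely the task of pinning these structures down and checking their mutual compatibility. Granting the identity, well-definedness and injectivity of the final map are immediate; for the full details in the plane-quartic case one consults \cite[Section~1]{thorne2016arithmetic}, and Section~\ref{sec: Construction of orbits} carries out the genus $4$ analogue.
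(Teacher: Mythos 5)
Your proposal follows essentially the same route as Thorne (and as this paper's proof of the $E_8$ analogue, Theorem~\ref{conj: e8 3.6}): the cohomological parametrization of orbits from \cite[Proposition 1]{bhargava2014arithmetic}, the identification of the stabiliser $Z_G(\kappa)$ with $J_C[2]$ through \cite[Corollary 2.9]{thorne2013vinberg} and the degree-$2$ del Pezzo dictionary, Kummer theory, and a compatibility check between the orbit cocycle and the Kummer coboundary. You also correctly isolate the crux --- the identity $[(\kappa,\kappa_D)] = \phi_*^{-1}(\delta([D]))$ --- which in the present paper is packaged as Lemma~\ref{lem: Lurie-and-descent} for the $E_8$ case and proved by tracing the translation-by-$B$ isomorphism of Heisenberg groups through Lurie's construction.

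One point is stated in a way that, taken literally, would collapse the map. You write that $[D]$ ``determines a $k$-rational translate of $2W_{g-1}$ (translate by any $k$-rational divisor representing $D$).'' If you translate $2W_{g-1}$ by a $k$-rational representative $A$ of $D$, then $\tau_A$ is a $k$-rational morphism, the induced isomorphism of theta groups $\heisenberg{\mathcal{L}} \to \heisenberg{\tau_A^*\mathcal{L}}$ is defined over $k$, and the resulting $1$-cocycle is trivial --- so every class would be sent to the base orbit, and the map would be far from injective. What the construction actually requires (see Lemma~\ref{lem: mum-theta} and Lemma~\ref{lem: Lurie-and-descent}) is to choose $B \in J_C(k^\sep)$ with $[2]B = A$ and translate by $B$: the line bundle $\mathcal{L}_B := \tau_B^*\mathcal{L}$ is still $k$-rational because $[2]B$ is, but the identification $\tau_B^* \colon \heisenberg{\mathcal{L}} \to \heisenberg{\mathcal{L}_B}$ is only defined over $k^\sep$, and its failure to descend is measured by precisely the Kummer cocycle $\sigma \mapsto [B^\sigma - B]$. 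The ``$k$-rational translate'' is thus $k$-rational as a divisor class, not as a translate by a $k$-rational point, and keeping this distinction straight is exactly what makes the cocycle comparison nontrivial. With that correction the outline is sound.
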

	
	\begin{figure}  
		\[
			\xym{
				& (C,P) =: x \ar[r] & \mcS(k) \arhook[d] \arhook[r] & (T \sslash W_T)(k) \\
				\frac{J_x(k)}{2} \arhook[r] &  G(k) \bs X_x(k) \arhook[r] & G(k) \bs X(k)  \ar[r]^\rho & (X \sslash G)(k) \ar[u]^[@]{\sim} \\
				& &  & X \ar[u]^\pi
			}
		\] 
		\caption{Diagram of functors.} \label{fig: functors}
	\end{figure}

	Figure \ref{fig: functors}  is a diagrammatic description of the relations between the functors noted in the three theorems above. Note in general that $\rho$ is not a bijection, though it is a bijection when $k$ is separably closed.


\section{Background regarding groups of type $A$,$D$,$E$} \label{sec: Background A,D,E}

	We will make use of the results of \cite[Section 1D]{thorne2016arithmetic}, which we state here for convenience. Our Theorem 3.2 below is slightly modified from Thorne's Theorem 1.10. The change allows us to state our results in the $\mathfrak{e}_8$ case without the extra data of a tangent vector required at various points for the $\mathfrak{e}_6$, $\mathfrak{e}_7$ cases of Thorne's results.

	\begin{theorem}[{\cite[Theorem 1.11]{thorne2016arithmetic}}] \label{thm: thorne1.11}
		\begin{enumerate}[(a)]
			\item
			Let $H$ be a split adjoint simple group of type $A$,$D$, or $E$. Let $Y := (H^{\theta(h) = h^{-1}})^\circ$ and let $G := (H^\theta)^\circ$. Let $T$ be a maximal subtorus of $Y$ and let $W_T$ be the Weyl group of this torus. Then the inclusion $T \ssq Y$ induces an isomorphism
			\[
				T \sslash W_T \iso Y \sslash G.
			\]
			\item 
			Suppose that $k=k^\sep$ and let $x,y$ be regular semisimple elements. Then $x$ is $G(k)$-conjugate to $y$ if and only if $x,y$ have the same image in $Y \sslash G$.
			\item 
			There exists a discriminant polynomial $\Delta \in k[Y]$ such that for all $x \in Y$, $x$ is regular semisimple if and only if $\Delta(x) \neq 0$. Furthermore, we have that $x$ is regular semi-simple if and only if the $G$-orbit of $x$ is closed in $Y$ and $\Stab_G(x)$ is finite.
		\end{enumerate}
	\end{theorem}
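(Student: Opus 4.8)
The plan is to derive all three parts from Richardson's structure theory for involutions of reductive groups, which in characteristic $0$ makes $G = (H^\theta)^\circ$ reductive, $k[Y]^G$ finitely generated, and the quotients in the statement the affine schemes $Y \sslash G = \Spec k[Y]^G$ and $T \sslash W_T = \Spec k[T]^{W_T}$. Under the standing hypotheses on $\theta$ the torus $T$ is a maximal $\theta$-split torus of $H$ of full rank, hence a maximal torus of $H$ on which $\theta$ acts by inversion, so that $W_T = N_G(T)/Z_G(T)$ is the full Weyl group $W(H,T)$ and $Z_G(T) = T^\theta = T[2]$ is finite. For part (a) I would first reduce to $k = k^\sep$: the map $T \sslash W_T \to Y \sslash G$ is induced by the inclusion $T \ssq Y$ (a $G$-invariant regular function on $Y$ restricts to an $N_G(T)$-, hence $W_T$-, invariant function on $T$), so it is defined over $k$, and a morphism of finite-type affine $k$-schemes is an isomorphism if and only if it is so after the faithfully flat base change $k \to k^\sep$, along which both quotients commute with base change (reductive, resp.\ finite, quotients do).

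Over $k^\sep$ I would then invoke the structure theory of the symmetric variety $Y$, on which $G$ acts by conjugation (conjugating $\theta(y) = y^{-1}$ by an element of $G$ preserves the defining equation): (i) every semisimple element of $Y$ lies in a $\theta$-split maximal torus of $H$, all of which are $G(k^\sep)$-conjugate, so every semisimple element is $G(k^\sep)$-conjugate into $T$, and two elements of $T$ are $G(k^\sep)$-conjugate if and only if they are $W_T$-conjugate; (ii) the $G$-orbit of $y \in Y$ is closed if and only if $y$ is semisimple --- one first checks that the Jordan factors $y_s, y_u$ of $y$ taken in $H$ already lie in $Y$ (by uniqueness of the Jordan decomposition and $\theta(y) = y^{-1}$), then degenerates $y$ to $y_s$ along a cocharacter of $G$ attached to a $\theta$-stable parabolic; and (iii) the regular semisimple locus $Y^\rss$ is a dense open subvariety, namely the set of $G$-conjugates of elements of $T$ lying off every root hyperplane, at which the centraliser in $G$ is the finite group $T^\theta$ (of the minimal dimension $0$).

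With these inputs the theorem unwinds. Part (a): the restriction map $k[Y]^G \to k[T]^{W_T}$ is injective because $G \cdot T \supseteq Y^\rss$ is dense in the irreducible variety $Y$ by (iii), and surjective by the Chevalley restriction theorem for symmetric varieties (Richardson), which extends every $W_T$-invariant regular function on $T$ to a $G$-invariant one on $Y$; hence it is an isomorphism, and therefore so is $T \sslash W_T \to Y \sslash G$, already over $k$. Part (c): let $\Delta_T \in k[T]^{W_T}$ be the discriminant of the root system $\Phi(H,T)$, a $W_T$-invariant polynomial whose zero locus in $T$ is the union of the root hyperplanes, and let $\Delta \in k[Y]^G$ be its image under the isomorphism of (a); for $x = x_s x_u \in Y$ one has $\Delta(x) = \Delta(x_s)$, so conjugating $x_s$ into $T$ shows that $\Delta(x) \neq 0$ if and only if $x_s$ lies off every root hyperplane, which by (iii) happens if and only if $x$ is regular semisimple. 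The last clause of (c) follows: if $x$ is regular semisimple then its $G$-orbit is closed by (ii) and $\Stab_G(x)$ is finite because $\Lie \Stab_G(x) = \mathfrak{g}^{\ad x}$ has minimal, hence zero, dimension; conversely a closed orbit forces $x = x_s$ semisimple, and a finite stabiliser then forces $\mathfrak{g}^{\ad x} = 0$, that is, $x$ regular. Part (b), over $k^\sep$: if $x, y \in Y^\rss$ have the same image in $Y \sslash G = T \sslash W_T$, then by (i) and (iii) they are $G(k^\sep)$-conjugate to elements $t_x, t_y \in T$ lying off every root hyperplane, at which $W_T$ acts freely; the fibres of $T \to T \sslash W_T$ through such points are single $W_T$-orbits, so equality of the images gives $t_x = w t_y w^{-1}$ for a lift $w \in N_G(T)(k^\sep)$ of the relevant Weyl element, whence $x$ and $y$ are $G(k^\sep)$-conjugate; the converse is immediate.

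The genuinely substantial inputs are the conjugacy of the $\theta$-split maximal tori in step (i) and the surjectivity half of the Chevalley restriction theorem used in part (a); the remainder is bookkeeping with the Jordan decomposition and general GIT, and the descent to $k$ is automatic. In characteristic $0$ I would simply quote these two facts from Richardson's work on involutions of reductive groups. Were one to insist on a self-contained treatment, essentially all of the effort would go into reproving them --- for instance by producing enough $G$-invariants by polarising the basic invariants of $H$ along the $\pm 1$-eigenspace decomposition of $d\theta$ and using that $Y$ is a spherical $G$-variety, so that $k[Y]^G$ is a polynomial ring of Krull dimension $\dim T$, which forces the restriction map to be an isomorphism.
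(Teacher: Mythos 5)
The paper does not prove this theorem --- it is stated as a direct citation of \cite[Theorem 1.11]{thorne2016arithmetic}, so there is no internal argument to compare against; your reconstruction should instead be measured against the standard treatment in the Vinberg/Richardson literature, which is what Thorne's proof rests on. By that measure your proposal is essentially correct and follows the same route: Richardson's structure theory of symmetric varieties (conjugacy of maximal $\theta$-split tori, closed orbit $\Leftrightarrow$ semisimple), the Jordan-decomposition degeneration to prove closedness, the Chevalley restriction theorem for symmetric pairs to establish $k[Y]^G \iso k[T]^{W_T}$, the freeness of $W_T$ on $T^\rss$ to get the orbit separation in (b), and faithfully flat descent along $k \to k^\sep$ to deduce (a) over the base field. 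Two points deserve to be flagged rather than asserted. First, you use that $T$ is actually a maximal torus of $H$ (not just of $Y$) with $\theta$ acting by inversion, and that the little Weyl group $N_G(T)/Z_G(T)$ is the full Weyl group $W(H,T)$; both are true, but only because of the standing hypothesis that $\theta$ is a stable (Thorne-type) involution --- the theorem statement as boxed does not itself constrain $\theta$, so these should be cited explicitly (they are part of what makes Thorne's setup work, cf.\ Proposition~\ref{prop: thorne involution} and \cite[Lemma 2.8]{thorne2013vinberg}). Second, in the Jordan-decomposition step you should note that $y_s, y_u$ land not merely in $H^{\theta(h)=h^{-1}}$ but in its identity component $Y$: since $y_s$ lies in the closure of the $\Gm$-orbit of $y$ under a suitable cocharacter and $y_u$ is a limit of powers of $y$ times $y_s^{-1}$, both lie in the connected component containing $y$, namely $Y$. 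With those caveats the argument is complete and matches the cited source in method; the extra generality you sketch at the end (producing invariants by polarisation and sphericity) is not needed for the present statement.
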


	\begin{theorem}[{\cite[Theorem 1.10]{thorne2016arithmetic}}] \label{thm: thorne1.10}
		\
		\begin{enumerate}[(a)]
			\item
				Let $H$ be a split adjoint simple group of type $A$,$D$, or $E$ and let $\mathfrak{h}$ be its Lie algebra. Let $V := \mathfrak{h}^{d\theta = -1}$ and let $G := (H^\theta)^\circ$. Let $\mathfrak{t}$ be a Cartan subalgebra of $V$ and let $W_\mathfrak{t}$ be the Weyl group. Then the inclusion $\mathfrak{t} \ssq V$ induces an isomorphism
				\[
					\mathfrak{t} \sslash W_\mathfrak{t} \iso V \sslash G.
				\]
				Additionally, $V \sslash G$ is isomorphic to affine space.
			\item 
				Suppose that $k=k^\sep$ and let $x,y$ be regular semisimple elements. Then $x$ is $G(k)$-conjugate to $y$ if and only if $x,y$ have the same image in $V \sslash G$.
			\item 
				There exists a discriminant polynomial $\Delta \in k[V]$ such that for all $x \in Y$, $x$ is regular semisimple if and only if $\Delta(x) \neq 0$. Furthermore, we have that $x$ is regular semisimple if and only if the $G$-orbit of $x$ is closed in $V$ and $\Stab_G(x)$ is finite.
			\item
				Moreover, we have that the isomorphism from part (a) induces isomorphisms
				\[
					\xym{
						\mbp \mathfrak{t} \sslash W_\mathfrak{t}  \ar@{}[d]|*=0[@]{\cong} \ar@{}[r]|*=0[@]{\cong} & \mbp V \sslash G \ar@{}[d]|*=0[@]{\cong} \\
						(\mathfrak{t} \bs \{0\}) \sslash (W_\mathfrak{t} \times \mbg_m) \ar@{}[r]|*=0[@]{\cong} & (V \bs \{0\}) \sslash (G \times \mbg_m) .
					}
				\]
		\end{enumerate}		
	\end{theorem}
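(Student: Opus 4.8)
Parts (a), (b), and (c) of the statement coincide with \cite[Theorem 1.10]{thorne2016arithmetic}, so the plan is to take those for granted and to derive part (d) from the isomorphism of part (a). The mechanism is to carry along one extra symmetry throughout, namely the homothety action of $\Gm$; this is precisely what lets us dispense with the auxiliary tangent-vector data that appears in Thorne's $\mathfrak{e}_6$ and $\mathfrak{e}_7$ cases. First I would equip $V = \mathfrak{h}^{d\theta=-1}$ with the scaling action $t\cdot v = tv$ of $\Gm$ and note that its restriction to the Cartan subalgebra $\mathfrak{t}\ssq V$ is again the scaling action. Since $G$ and $W_\mathfrak{t}$ act by linear automorphisms, both actions commute with $\Gm$, and the inclusion $\iota\colon\mathfrak{t}\inj V$ is $\Gm$-equivariant. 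Hence the composite $\mathfrak{t}\inj V\ra V\sslash G$ is simultaneously $W_\mathfrak{t}$-invariant (which is what makes the map of part (a) exist) and $\Gm$-equivariant, so the isomorphism $\phi\colon\mathfrak{t}\sslash W_\mathfrak{t}\ra V\sslash G$ of part (a), obtained by factoring this composite through the canonical map $\mathfrak{t}\ra\mathfrak{t}\sslash W_\mathfrak{t}$, is $\Gm$-equivariant.

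Next I would pass to coordinate rings. As $G$ is reductive, $k[V]^G$ is a finitely generated graded $k$-algebra with degree-zero part $k$, and by part (a) it is a polynomial ring; the same holds for $k[\mathfrak{t}]^{W_\mathfrak{t}}$, whose homogeneous generators may be taken of the degrees $d_1,\dots,d_n$ of a basic set of $W_\mathfrak{t}$-invariants on $\mathfrak{t}$ (for type $E_8$ these are $2,8,12,14,18,20,24,30$, the weights appearing in the family $(\star)$). The $\Gm$-equivariance of $\phi$ says exactly that $\phi^\ast\colon k[V]^G\ra k[\mathfrak{t}]^{W_\mathfrak{t}}$ is an isomorphism of graded rings, and applying $\proj$ to such an isomorphism of connected positively graded algebras yields an isomorphism $\proj k[\mathfrak{t}]^{W_\mathfrak{t}}\ra\proj k[V]^G$ of weighted projective spaces $\mbp(d_1,\dots,d_n)$, which I would identify with the bottom arrow $(\mathfrak{t}\setminus\{0\})\sslash(W_\mathfrak{t}\times\Gm)\ra(V\setminus\{0\})\sslash(G\times\Gm)$ of the square. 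The two vertical arrows are then formal: for commuting actions of a group $\Gamma$ and a reductive group $G'$ on a variety $Z$ the categorical quotient is associative, $(Z\sslash\Gamma)\sslash G'\iso Z\sslash(\Gamma\times G')\iso(Z\sslash G')\sslash\Gamma$; taking $Z = \mathfrak{t}\setminus\{0\}$, $\Gamma = \Gm$ acting freely by homothety (so $Z\sslash\Gm = \mbp\mathfrak{t}$ is a geometric quotient) and $G' = W_\mathfrak{t}$ gives the left vertical isomorphism, and $Z = V\setminus\{0\}$, $G' = G$ gives the right one. Commutativity of the square is immediate, since every arrow in it is induced by $\iota$ together with the relevant quotient maps. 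Nothing here leaves the base field: $H$ is split, $\theta$ and $\mathfrak{t}$ are defined over $k$, and the homothety action, the invariant rings and $\proj$ are all defined over $k$.

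The step I expect to be the main obstacle is making precise the identification $(V\setminus\{0\})\sslash(G\times\Gm)\iso\mbp V\sslash G\iso\proj k[V]^G$. The $G$-nullcone in $V$ can be strictly larger than $\{0\}$, so the convenient identity $(V\setminus\{0\})\sslash G = (V\sslash G)\setminus\{\bar 0\}$ is not automatic, and one must pin down which categorical quotient is meant for the non-affine variety $\mbp V$: the intended one is the GIT quotient for the $G$-linearized line bundle $\mathcal{O}_{\mbp V}(1)$, which in this situation is a categorical quotient and equals $\proj\bigoplus_{m\ge 0}(\Sym^m V^\ast)^G = \proj k[V]^G$. Once these conventions are fixed, the observation that each of $\mathfrak{t}\sslash W_\mathfrak{t}$ and $V\sslash G$ carries a unique $\Gm$-fixed point (the image of the origin, which $\phi$ must preserve) shows that $\phi$ identifies the two punctured affine cones compatibly with $\Gm$, and the remaining verifications are bookkeeping. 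No input about the geometry of $V$ beyond part (a) is needed.
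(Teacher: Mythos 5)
Your argument for part (d) is essentially the paper's: both observe that since the inclusion $\mathfrak{t} \inj V$ and the $G$- and $W_\mathfrak{t}$-actions are linear, the isomorphism of part (a) is $\Gm$-equivariant, and both then assemble the square by the associativity of categorical quotients with respect to the commuting actions of $G$ (resp.\ $W_\mathfrak{t}$) and $\Gm$. Your extra pass through graded invariant rings and $\proj$, and your flagging of the nullcone subtlety in identifying $(V\setminus\{0\})\sslash(G\times\Gm)$ with $\proj k[V]^G$, make explicit a point the paper's terse chain of $\sslash$-isomorphisms glides over, and it is resolved exactly by the linearization considerations you indicate.
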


	\begin{proof}
		The statements of parts (a-c) are directly from \cite[Theorem 1.10]{thorne2016arithmetic}, so we need only prove the last claim.
		
		By definition, we have that $\mbp V := (V \bs \{0\}) \sslash \mbg_m$, where $\mbg_m$ acts on $V$ via $\lambda g \mapsto (\lambda \cdot \id_V) (g)$. In particular, $\lambda$ acts on $V$ through the centre of $\GL(V)$, so we ascertain that $G \times \mbg_m$ acts on $V$. Moreover, we have that $G \times \mbg_m$ acts on $\mathfrak{t}$ via the inclusion $i\: \mathfrak{t} \inj V$. The action of $G \times \id_{\mbg_m}$ on $\mathfrak{t}$ factors through $W_\mathfrak{t}$ by the first part of the theorem, and the action of $\id_G \times \mbg_m$ on $\mathfrak{t}$ factors faithfully through the centre of $\GL(\mathfrak{t})$ since $i$ is a morphism of Lie algebras. Thus, we see that $W_\mathfrak{t} \times \mbg_m$ acts on $\mathfrak{t}$ and that this action is compatible with the action of $G \times \mbg_m$ on $V$. 
		
		From part (a), we have that $i$ induces an isomorphism $i\: \mathfrak{t} \sslash W_\mathfrak{t} \iso V \sslash G$. Since this induced isomorphism is also $\mbg_m$-equivariant, we have
			\begin{align*}
				& ((\mathfrak{t} \bs \{ 0 \}) \sslash \mbg_m) \sslash W_\mathfrak{t} \iso
				(\mathfrak{t} \bs \{0\}) \sslash ( W_\mathfrak{t} \times \mbg_m) \\ 
				\iso \ \ & ((\mathfrak{t} \bs \{0\}) \sslash W_\mathfrak{t} ) \sslash \mbg_m 
				\overset{i}{\iso} ((V \bs \{0\}) \sslash G) \sslash \mbg_m \\
				\iso \ \ & (V \bs \{0\}) \sslash (G \times \mbg_m) \iso
				((V \bs \{0\}) \sslash \mbg_m) \sslash G 
			\end{align*}
		which gives the result.	
	\end{proof}	
	
	The approach we use in this article requires several results regarding involutions of a split adjoint simple algebraic group. Historically, Vinberg theory has been of great help in understanding these involutions. The specific involutions we are interested in are the stable involutions (in the sense of \cite[Section 2]{thorne2013vinberg}) which are also split.
	
	\begin{proposition}[{\cite[Proposition 1.9]{thorne2016arithmetic}}] \label{prop: thorne involution}
		Let $H$ be a split adjoint simple group of type $A$, $D$, or $E$ over $k$. There exists a unique $H(k)$-conjugacy class of involutions $\theta$ of $H$ satisfying the following two conditions:
			\begin{enumerate}[(i)]
				\item 
					$\trace( d\theta\: \mathfrak{h} \ra \mathfrak{h}) = -\rank H$
				\item
					The group $(H^\theta)^\circ$ is split.
			\end{enumerate}
	\end{proposition}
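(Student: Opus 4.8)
The plan is to reduce to a statement about involutions of the Lie algebra, settle the separably closed case with Vinberg theory, and then descend to a general $k$. Since $H$ is adjoint, $\mathrm{Int}\colon H\xrightarrow{\ \sim\ }\Aut(\mathfrak h)^\circ$ and $\Aut(H)=\Aut(\mathfrak h)$, so an involution $\theta$ of $H$ is the same datum as an involution $d\theta$ of $\mathfrak h$; moreover $Z_H(\theta)=H^\theta$ (if $\mathrm{Int}(h)\theta=\theta\,\mathrm{Int}(h)$ then $\theta(h)^{-1}h\in Z(H)=\{1\}$), so the conjugacy class of $\theta$ is the $k$-variety $H/H^\theta$. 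Hence two $k$-rational involutions that become $H(k^\sep)$-conjugate are $H(k)$-conjugate precisely when their classes agree in $\ker\bigl(H^1(k,H^\theta)\to H^1(k,H)\bigr)$; this is the bookkeeping tool for the uniqueness half.

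First I would identify condition (i) with \emph{stability}: $\trace(d\theta)=-\rank H$ if and only if $\mathfrak h^{-d\theta}$ contains a Cartan subalgebra of $\mathfrak h$, i.e. $\theta$ is a stable involution in the sense of \cite[Section 2]{thorne2013vinberg}. If $\mathfrak c\ssq\mathfrak h^{-d\theta}$ is a Cartan subalgebra then $d\theta=-1$ on $\mathfrak c$, so $d\theta$ interchanges $\mathfrak h_\alpha\leftrightarrow\mathfrak h_{-\alpha}$ and $\trace(d\theta)=-\dim\mathfrak c=-\rank H$. Conversely, pick a Cartan subspace $\mathfrak a\ssq\mathfrak h^{-d\theta}$, set $\mathfrak m=Z_{\mathfrak h}(\mathfrak a)$ (reductive of full rank, $\theta$-stable, with $\mathfrak m\cap\mathfrak h^{-d\theta}=\mathfrak a$); since $d\theta$ has trace $0$ on $\mathfrak h/\mathfrak m$, a short computation with the restricted-root decomposition gives $\trace(d\theta)=\dim\mathfrak m-2\dim\mathfrak a$, so $-\rank H=\dim\mathfrak m-2\dim\mathfrak a\ge\rank H-2\dim\mathfrak a$ forces $\dim\mathfrak a=\rank H$ and $\mathfrak m=\mathfrak a$. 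Over $k=k^\sep$ every algebraic group is split, so (ii) is vacuous and the proposition reduces to: stable involutions exist and form one conjugacy class --- this is Vinberg theory as recalled in \cite{thorne2013vinberg}, which I would quote.

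For existence over a general $k$: fix a pinning $(T,B,\{X_i\in\mathfrak h_{\alpha_i}\})$ of $H$ over $k$, complete the $X_i$ to $\mathfrak{sl}_2$-triples $(X_i,\alpha_i^\vee,Y_i)$, and let $\theta$ be the automorphism with $d\theta|_{\mathfrak t}=-\id$ and $d\theta(X_i)=Y_i$; this is compatible with the Serre relations (symmetric in $X_i\leftrightarrow Y_i$ after inverting $\mathfrak t$), satisfies $\theta^2=\id$, and is defined over $k$. As $\mathfrak t\ssq\mathfrak h^{-d\theta}$ is a Cartan subalgebra, $\theta$ is stable, so (i) holds. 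For (ii) one must check $G:=(H^\theta)^\circ$ is split over $k$: I would produce a split maximal torus of $G$ over $k$ inside $\mathfrak h^{d\theta}$, noting that $\mathfrak h^{d\theta}$ is spanned by the root vectors $X_\alpha+d\theta(X_\alpha)$ (which are $k$-semisimple --- e.g. $X_i+Y_i$ is already $k$-diagonalizable in the $\mathfrak{sl}_2$ through $X_i$), and verifying that $G$ is the Chevalley (hence split) form of the symmetric subgroup attached to this grading --- for $E_8$ it is the half-spin group $\mathrm{HSpin}_{16}$; cf. the discussion around \cite[Proposition 1.9]{thorne2016arithmetic}. The other pinned Chevalley involution, with $d\theta'(X_i)=-Y_i$ (equivalently $\theta'=\mathrm{Int}(\rho^\vee(-1))\circ\theta$), is $H(k^\sep)$-conjugate to $\theta$ but has $(H^{\theta'})^\circ$ a non-split form in general (over $\mbr$ it is compact); so (ii) is exactly what pins the right representative down over a non-closed field.

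Uniqueness over $k$ is where the work lies. Given $\theta_0$ as above and another $\theta_1$ with (i)--(ii): by the $k^\sep$ case $\theta_1=\mathrm{Int}(a)\circ\theta_0$ for a unique $a\in H(k)$ with $\theta_0(a)=a^{-1}$ (adjointness gives $a\in H(k)$; $\theta_1^2=\id$ gives $a\,\theta_0(a)=1$), and $\theta_1$ is $H(k)$-conjugate to $\theta_0$ exactly when $a=h\,\theta_0(h)^{-1}$ for some $h\in H(k)$. Now $G_1:=(H^{\theta_1})^\circ$ is an inner twist of the split group $G_0:=(H^{\theta_0})^\circ$ by the class of $a$ (using that $H^{\theta_0}$ is connected for the $\theta$ in question --- certainly for $E_8$; in general one carries along its component group), and an inner form of a split group is split iff the twisting class dies in $H^1(k,G_0^{\mathrm{ad}})$, the quasi-split inner form being unique. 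So (ii) forces that vanishing, and combined with the fact that $a$ already trivializes in $H^1(k,H)$ I would conclude that $a$ is $\theta_0$-twisted conjugate to $1$. This implication --- that ``$(H^\theta)^\circ$ split'' collapses the $k^\sep$-conjugacy class to a single $k$-conjugacy class --- is the main obstacle; I expect to prove it by identifying the theta-inverted variety $H^{\theta_0(h)=h^{-1}}$ with the symmetric variety $H/G_0$ and using the rigidity that an embedded subgroup recovers its defining involution, so that isomorphic split $G_i\hookrightarrow H$ force $\theta_1\sim\theta_0$ --- this is essentially the computation of \cite[Section 1]{thorne2016arithmetic} in the $E_6,E_7$ cases, which I would adapt, the only genuinely new ingredient being the elementary identification of the split symmetric subgroup of $E_8$.
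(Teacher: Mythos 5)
The paper does not prove this proposition: it is quoted verbatim as Thorne's Proposition 1.9, so there is no ``paper's own proof'' to compare against. Evaluating your attempt on its own terms: the reduction to Lie-algebra involutions, the equivalence of the trace condition with stability, and the existence argument via a pinned Chevalley involution $d\theta(X_i)=Y_i$, $d\theta|_{\mathfrak t}=-\id$ are all sound in outline (and your sign discussion correctly distinguishes the split-fixed-group involution from the compact one $d\theta'(X_i)=-Y_i$).

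The genuine gap is in the uniqueness over $k$, and it is exactly where you yourself flag uncertainty. Two problems compound. First, the bookkeeping is off: the element $a\in H(k)$ with $\theta_1=\mathrm{Int}(a)\theta_0$ and $\theta_0(a)=a^{-1}$ is not the relevant cohomological datum. If $\theta_1=\mathrm{Int}(g)\theta_0\mathrm{Int}(g)^{-1}$ with $g\in H(k^\sep)$, the obstruction to $H(k)$-conjugacy is the class of $c_\sigma := g^{-1}\sigma(g)\in Z^1(k,H^{\theta_0})$, and $a=g\,\theta_0(g)^{-1}$; saying ``$a$ already trivializes in $H^1(k,H)$'' conflates an element with a cocycle and gives you no purchase on $[c]$. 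Second, and more seriously, even with the right cocycle, ``$(H^{\theta_1})^\circ$ is split'' only tells you the image of $[c]$ under $H^1(k,H^{\theta_0})\to H^1(k,\Aut(G_0))$ (via $H^{\theta_0}$ acting on its identity component by conjugation) is trivial. But $Z(H^{\theta_0})$ is nontrivial (for $E_8$, $G_0$ is a half-spin group, whose center has order $2$), so the map $H^{\theta_0}\to\Aut(G_0)$ has a kernel, and $H^1(k,H^{\theta_0})\to H^1(k,\Aut(G_0))$ can kill nontrivial classes. So ``$G_1$ split'' does not force $[c]=1$ by inner-form considerations alone; you need to use the splitness condition in a more refined way (for instance by producing a split maximal torus of $G_1$ over $k$, taking its centralizer in $H$ to get a split maximal torus of $H$ inverted by $\theta_1$, and comparing pinnings adapted to $\theta_0$ and $\theta_1$). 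The appeal to ``rigidity of $H/G_0$'' is a heuristic standing in for that missing argument, not a proof.
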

	
	It will be convenient to name the involutions satisfying the conditions of Proposition \ref{prop: thorne involution}.
	
	\begin{definition}
		Let $H$ be a split adjoint simple group of type $A$, $D$, or $E$ over $k$. We call an involution $\theta \: H \ra H$ satisfying the conditions of Proposition \ref{prop: thorne involution} an involution \emph{of Thorne type}.
	\end{definition}

\subsection{The refined construction of Lurie} \label{sec: Lurie}

We give a short summary of \cite[Section 2]{thorne2016arithmetic} as this will be helpful to state the main result we rely on. For details the reader is encouraged to consult the original article.

\pagebreak

Consider the following collection of abstract data: 

\bigskip
\noindent
\textbf{Data I:}		
\begin{enumerate}
	\item 
	An irreducible simply laced root lattice $(\Lambda, \gen{\cdot, \cdot})$ together with a continuous homomorphism $G_k \ra W_\Lambda \ssq \Aut(\Lambda)$. Here $W_\Lambda$ is the Weyl group of $\Lambda$.
	\item
	A central extension $\wtilde V$ of $V := \Lambda/2\Lambda$, where $\wtilde V$ satisfies
	\[
	\exactseq{\{\pm 1\}}{\wtilde V}{V}
	\]
	and for any $\wtilde v \in \wtilde V$ we have that $\wtilde v^2 = (-1)^{\frac{\gen{v,v}}{2}}$. 
	\item
	A continuous homomorphism $G_k \ra \Aut(\wtilde V)$ leaving $\{\pm 1 \}$ invariant, compatible with $G_k \ra \Aut(\Lambda) \ra \Aut(V)$.
	\item
	A finite dimensional $k$-vector space $W$ and a homomorphism $\rho\: \wtilde V \ra \GL(W)$ of $k$-groups.  
\end{enumerate}

We denote the ensemble of such data by a quadruplet $(\Lambda, \wtilde V, W, \rho)$. We may form the category $\mathcal{C}(k)$ of quadruplets satisfying conditions (1)-(4), whose morphisms are the obvious morphisms. Thorne \cite[Section 2]{thorne2016arithmetic} presents a modified construction of Lurie \cite{lurie2001simply}, which given a quadruplet $(\Lambda, \wtilde V, W, \rho)$ produces:

\bigskip
\noindent
\textbf{Data II:}	
\begin{enumerate}[(a)]
	\item 
	A simple Lie algebra $\mathfrak{h}$ over $k$ of type equal to the Dynkin type of $\Lambda$.
	\item
	A Lie group $H$, which is the adjoint group over $k$ whose Lie algebra is $\mathfrak{h}$, and a maximal torus $T$ of $H$. The torus $T$ is canonically isomorphic to $\Hom(\Lambda, \mbg_m)$.
	\item
	An involution $\theta\: H \ra H$ which acts on $T$ by $\theta(t) = t^{-1}$.
	\item
	An isomorphism $T[2](k^\sep) \iso V^\vee$ of Galois modules.
	\item
	A Lie algebra homomorphism $\rho\: \mathfrak{h}^{d\theta=1} \ra \mathfrak{gl}(W)$ defined over $k$.
\end{enumerate}

We denote the ensemble of such data by a tuple $(H, \theta, T, \rho)$, and we may form a category $\mathcal{D}(k)$ whose objects are tuples satisfying conditions (a)-(e) and whose morphisms are morphisms of the underlying Lie groups $\varphi\: H_1 \ra H_2$ over $k$ such that $\varphi$ intertwines the involutions $\theta$ (i.e, $\theta_2 \varphi = \varphi \theta_1$), satisfies $\rho_2 \circ d\varphi_e = \rho_1$, and such that $\varphi(T_1) \ssq T_2$. Note that while we require $\varphi(T_1) \ssq T_2$, this inclusion is not necessarily canonically defined. We point out that in Lemma \ref{lem: Lurie-and-descent}, we construct morphisms in $\mathcal{D}(k)$ where the tori \emph{are} canonically identified. We phrase the construction of Lurie as modified by Thorne in the following way:

\begin{theorem} \label{thm: Lurie construction}
	For each field $k$ of characteristic $0$, there is a functor $f_k\: \mathcal{C}(k) \ra \mathcal{D}(k)$ which is injective on objects.
\end{theorem}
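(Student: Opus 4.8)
The plan is to make Lurie's construction \cite{lurie2001simply} explicit in the form of \cite[Section 2]{thorne2016arithmetic}, and then to check functoriality and injectivity on objects. Given a quadruplet $(\Lambda, \wtilde V, W, \rho)$, I would first build everything over $k^\sep$. Let $\Phi \ssq \Lambda$ be the set of vectors of norm $2$; since $\Lambda$ is an irreducible simply laced root lattice, $\Phi$ is an irreducible root system of the corresponding Dynkin type, with Weyl group $W_\Lambda$. Put $T_{k^\sep} := \Hom(\Lambda, \Gm)$, so that $X^*(T_{k^\sep}) = \Lambda$, and let $\mathfrak{t}_{k^\sep} = \Lie T_{k^\sep}$. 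For each $\alpha \in \Phi$ choose a lift $\tilde\alpha \in \wtilde V$ of the image $\bar\alpha \in V = \Lambda/2\Lambda$; multiplication in $\wtilde V$ together with the normalization $\wtilde v^2 = (-1)^{\gen{v,v}/2}$ yields a $\{\pm1\}$-valued asymmetry function $\epsilon\: \Phi \times \Phi \ra \{\pm1\}$ with exactly the algebraic properties needed so that the Chevalley-style formulas $[x_\alpha, x_{-\alpha}] = \alpha^\vee$ and $[x_\alpha, x_\beta] = \epsilon(\alpha,\beta) x_{\alpha+\beta}$ (with $x_{\alpha+\beta}$ read as $0$ when $\alpha+\beta \notin \Phi$) define a Lie bracket on
\[
	\mathfrak{h}_{k^\sep} := \mathfrak{t}_{k^\sep} \oplus \bigoplus_{\alpha \in \Phi} k^\sep\, x_\alpha
\]
satisfying the Jacobi identity \cite{lurie2001simply}. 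This $\mathfrak{h}_{k^\sep}$ is simple of the Dynkin type of $\Lambda$ (item (a)), and I take $H_{k^\sep}$ to be its adjoint group with maximal torus $T_{k^\sep}$ (item (b)). I let $\theta$ be inversion on $T_{k^\sep}$, extended to $H_{k^\sep}$ by $x_\alpha \mapsto -\epsilon'(\alpha) x_{-\alpha}$ on root spaces, where the signs $\epsilon'(\alpha)$ — again determined by $\wtilde V$ — are pinned down so that $\theta^2 = \id$ on the nose (item (c)); and $T[2](k^\sep) = \Hom(\Lambda, \mu_2) = \Hom(V, \{\pm1\}) = V^\vee$ canonically (item (d)).

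Next I would descend to $k$. The homomorphisms $G_k \ra W_\Lambda \ssq \Aut(\Lambda)$ and $G_k \ra \Aut(\wtilde V)$, which by hypothesis are compatible with $G_k \ra \Aut(V)$, together equip $\mathfrak{h}_{k^\sep}$ with a semilinear $G_k$-action by Lie algebra automorphisms commuting with $\theta$ — the $\Aut(\wtilde V)$-component being what forces the asymmetry functions to transform consistently. Galois descent then yields $(\mathfrak{h}, \theta)$, hence $(H, \theta, T)$, over $k$, with $T$ canonically $\Hom(\Lambda, \Gm)$ and the identification in (d) $G_k$-equivariant. For item (e): the $+1$-eigenspace $\mathfrak{h}^{d\theta=1}$, which is the Lie algebra of $(H^\theta)^\circ$, is spanned over $k^\sep$ by the $\theta$-symmetric combinations of the root vectors $x_\alpha, x_{-\alpha}$, and Lurie's construction supplies a homomorphism from it into the Lie algebra of the image of $\rho\: \wtilde V \ra \GL(W)$ (a finite subgroup of $\GL(W)$ normalized by $H^\theta$); composing and descending — valid because the $G_k$-action on $\wtilde V$ is compatible with all the other Galois actions — produces the desired $k$-rational Lie algebra homomorphism $\rho\: \mathfrak{h}^{d\theta=1} \ra \mathfrak{gl}(W)$. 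This defines $f_k$ on objects, with values in $\mathcal{D}(k)$.

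For functoriality, a morphism $(\Lambda_1, \wtilde V_1, W_1, \rho_1) \ra (\Lambda_2, \wtilde V_2, W_2, \rho_2)$ in $\mathcal{C}(k)$ consists of a $G_k$-equivariant isometry $\Lambda_1 \ra \Lambda_2$ covered by a map $\wtilde V_1 \ra \wtilde V_2$ compatible with the $\rho_i$; this carries $\Phi_1$ into $\Phi_2$ and preserves the sign data, and so induces a morphism $\varphi\: H_1 \ra H_2$ over $k$ with $\theta_2 \varphi = \varphi \theta_1$, $\rho_2 \circ d\varphi_e = \rho_1$, and $\varphi(T_1) \ssq T_2$ — i.e. a morphism in $\mathcal{D}(k)$. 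For injectivity on objects, I would reconstruct the pieces of Data I from Data II: $\Lambda$ with its Galois action is the character lattice of $T \ssq H$; $V$ and $V^\vee$ are obtained from $T[2]$ via (d); the central extension $\wtilde V$, with its $G_k$-action, is recovered from the $\theta$-fixed (modulo $T[2]$) lifts of $V$ to $N_H(T)$, as in \cite[Section 2]{thorne2016arithmetic}; and $(W,\rho)$ is read off from item (e). Hence $f_k$ is injective on objects.

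I expect the main obstacle to be the bookkeeping of the $\{\pm1\}$-valued cocycles: one must check that the asymmetry function extracted from $\wtilde V$ really satisfies the identities forcing the Jacobi identity; that a coherent choice of the signs $\epsilon'$ makes $\theta$ an honest involution rather than an automorphism squaring into $T[2]$; that the $G_k$-action built from $\Aut(\wtilde V)$ preserves the bracket and commutes with $\theta$; and, for injectivity, that $\wtilde V$ with its $G_k$-action can be read back off from $(H, \theta)$. Each of these is exactly what the hypotheses packaged into Data I — particularly $\wtilde v^2 = (-1)^{\gen{v,v}/2}$ and the mutual compatibility of the various Galois actions — are arranged to guarantee, and the verifications are those carried out in \cite{lurie2001simply} and \cite[Section 2]{thorne2016arithmetic}.
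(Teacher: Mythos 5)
The paper does not actually prove this theorem; it states it as a repackaging of \cite[Section 2]{thorne2016arithmetic}, with the remark that injectivity on objects is extracted from Kaletha's appendix to that paper. So you are supplying a proof sketch where the paper gives a citation, which is fine, and your overall shape — build $\mathfrak{h}$ over $k^\sep$ via a Chevalley basis whose structure constants are read off from $\wtilde V$, descend via the compatible $G_k$-actions, check functoriality on morphisms of quadruplets, and recover Data I from Data II for injectivity — is indeed the strategy of Lurie/Thorne.

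There is, however, a genuine error in your description of item (e). You say Lurie's construction supplies a homomorphism from $\mathfrak{h}^{d\theta=1}$ "into the Lie algebra of the image of $\rho\colon \wtilde V \ra \GL(W)$ (a finite subgroup of $\GL(W)$)." The image of $\wtilde V$ in $\GL(W)$ is a finite group, whose Lie algebra is zero; a homomorphism into it is the zero map and cannot be the desired $\rho\colon \mathfrak{h}^{d\theta=1} \ra \mathfrak{gl}(W)$. What actually happens in \cite{lurie2001simply} and \cite[Section 2]{thorne2016arithmetic} is different in kind: the $\wtilde V$-action on $W$ is used to \emph{define} the action of the Chevalley generators of $\mathfrak{h}^{d\theta=1}$ on $W$ (roughly, the element of $\wtilde V$ lying over $\bar\alpha$ determines how $x_\alpha + x_{-\alpha}$ acts), and one then verifies these operators satisfy the Serre relations of the appropriate Lie subalgebra. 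The target is $\mathfrak{gl}(W)$ itself, not the (trivial) Lie algebra of a finite subgroup. You would need to replace this step with the actual mechanism. Your sketch of injectivity (recovering $\Lambda$ from $X^*(T)$, $V^\vee$ from $T[2]$, $\wtilde V$ from lifts of $V$ in $N_H(T)$, and $(W,\rho)$ from item (e)) is the right idea, though the paper flags this as a nontrivial point resolved by Kaletha; asserting it follows "as in Thorne Section 2" leaves the one step the paper itself does not claim to have handled.
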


\begin{remark}
	One can infer that this functor is injective on objects from the appendix of \cite{thorne2016arithmetic}, due to Kaletha. 
\end{remark}

\section{Uniquely trigonal genus $4$ curves, del Pezzo surfaces of degree $1$, and $E_8$} \label{sec: uniquely trigonal locus}

	We now describe the classically understood relation between the $E_8$ lattice, del Pezzo surfaces of degree $1$, and uniquely trigonal genus $4$ curves. We point out that \cite{thorne2016arithmetic} relies on the analogous connections between plane quartic curves, del Pezzo surfaces of degree $2$, and the $E_7$ lattice.

	Generally, a genus 4 curve is trigonal in two different ways, and the corresponding linear systems of divisors can be found as follows. A canonical model of a non-hyperelliptic genus 4 curve is a complete intersection of a quadric and a cubic in $\mbp^3$. A linear system of lines on the quadric induces a linear system of degree 3 and dimension 1 on the curve. In the general case, the quadric is nonsingular and has two such linear systems. In the special case where the quadric has a singular point, there is only one such linear system. In that case the curve is trigonal in only one way.

	\begin{definition} \label{def: uniquely trigonal}
		Let $C/k$ be a smooth curve of genus $g$. We say that $C$ is a \emph{uniquely trigonal} curve if for any two morphisms $\pi, \pi'\: C_{k^\sep} \ra \mbp_{k^\sep}^1$ of degree $3$, there is a $\tau \in \Aut(\mbp_{k^\sep}^1)$ such that $\pi = \tau \circ \pi'$.
	\end{definition}

	\begin{remark}
		It is impossible for a smooth curve of genus greater than $2$, defined over a characteristic $0$ field, to be both trigonal and hyperelliptic \cite[Section 2.8]{vakil2001twelve}.
	\end{remark}

	Let $S/k$ be a del Pezzo surface of degree $1$ and let $\mathcal{K}_S$ be the canonical divisor of $S$. We have that $-3\mathcal{K}_S$ is very ample, and the model of $S$ given by the associated linear system embeds $S$ as a smooth sextic hypersurface in the weighted projective space $\mbp(1:1:2:3)$. Conversely, any smooth sextic hypersurface in $\mbp(1:1:2:3)$ is a del Pezzo surface of degree $1$. Additionally, the linear system associated to the divisor $-2\mathcal{K}_S$ determines a rational map $S \dashrightarrow \mbp(1:1:2)$ of generic degree $2$. There is an order $2$ automorphism of $S$ which exchanges the branches of this rational map called the \emph{Bertini involution}. Moreover, the fixed locus of the Bertini involution is the union of a smooth irreducible curve and an isolated point. We call the component which is the smooth irreducible curve the \emph{branch curve}. For details, see \cite[Section 8.8.2]{dolgachev2012classical}.
	
	Let $C$ be the branch curve of the Bertini involution. The linear system associated to the divisor $-\mathcal{K}_S$ determines a rational map $\pi_S\: S \dashrightarrow \mbp^1$ which restricts to a degree $3$ morphism $\pi_C\: C \longrightarrow \mbp^1$. The sections of the anti-canonical bundle of $S$ are precisely the fibres of $\pi_S$. It is classically known that the set of $p \in \mbp^1(k^\sep)$ such that the curve $\pi_S^{-1}(p)$ is singular is exactly the set of $p \in \mbp^1(k^\sep)$ such that there is a ramification point of $\pi_C$ lying over $p$ (see \cite[Corollary 2.2]{kulkarni2016explicit}). For details, see \cite[Section 8.8.3]{dolgachev2012classical}. Theorem \ref{thm: uniquely trigonal genus 4} is a compilation of classical facts.
		
	\begin{theorem} \label{thm: uniquely trigonal genus 4}
		Let $C$ be a curve of genus $4$ which is not hyperelliptic. Then the following are equivalent:
			\begin{enumerate}[(a)]
				\item 
					The curve $C$ is a uniquely trigonal curve.
				\item
					The canonical model of $C$ is the intersection of a cubic and a quadric cone in $\mbp^3$.
				\item
					There is a vanishing even theta characteristic of $C$. That is, there is an isomorphism class of line bundles on $C_{k^\sep}$ such that for any member $\mathcal{L}_\theta$, we have that $\mathcal{L}_\theta^{\otimes 2}$ is isomorphic to the canonical bundle and $h^0(C_{k^\sep}, \mathcal{L}_\theta)$ is a positive even integer.
				\item
					There is a unique vanishing even theta characteristic of $C$.
				\item
					There is a del Pezzo surface $S$ of degree $1$ defined over $k$ such that $C$ is the branch curve of the Bertini involution on $S$. If $k = k^\sep$ then $S$ is unique up to isomorphism.
			\end{enumerate}
	\end{theorem}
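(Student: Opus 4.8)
The plan is to prove the cycle of equivalences by exploiting the canonical embedding of $C$ in $\mbp^3$ and the description of its quadric, together with the standard dictionary between theta characteristics, determinantal representations of the canonical curve, and del Pezzo surfaces. First I would establish (a) $\Leftrightarrow$ (b). A non-hyperelliptic genus $4$ curve has canonical model a complete intersection of a unique quadric $Q$ and a (non-unique) cubic in $\mbp^3$; the two rulings of $Q$, when $Q$ is smooth, cut out the two $g^1_3$'s, and when $Q$ is a cone of rank $3$ there is a single ruling, hence a single $g^1_3$. (The quadric cannot have rank $\le 2$, else the curve would be reducible or non-reduced.) So $C$ is uniquely trigonal exactly when its canonical quadric is a cone, giving (a) $\Leftrightarrow$ (b).

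Next I would handle (b) $\Leftrightarrow$ (c) $\Leftrightarrow$ (d) via theta characteristics. If $g^1_3 = |D|$ is the unique trigonal pencil, then $\mathcal{K}_C - D$ is again a $g^1_3$ of degree $3$; uniqueness forces $\mathcal{K}_C - D \sim D$, i.e. $2D \sim \mathcal{K}_C$, so $\mathcal{L}_\theta := \mathcal{O}_C(D)$ is a theta characteristic with $h^0 = 2$, which is even and positive — a vanishing even theta characteristic, and its uniqueness follows from the uniqueness of the pencil (and conversely a vanishing even theta characteristic of a genus $4$ curve has $h^0 = 2$, hence yields a $g^1_3$, and if there were two trigonal pencils the generic genus $4$ behavior would give a smooth quadric, contradicting the argument below). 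Concretely, the rank of the canonical quadric $Q$ is $6 - 2 h^0(\mathcal{L}_\theta)$-type data: a smooth $Q$ (two pencils) corresponds to $C$ having no vanishing even theta characteristic, while a quadric cone corresponds to exactly one such characteristic; this ties (b), (c), (d) together. I would cite the classical theory of even theta characteristics on genus $4$ curves for the numerical claim $h^0(\mathcal{L}_\theta) \le 2$ and the identification with the singular quadric.

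Finally, (a)--(d) $\Leftrightarrow$ (e) is the del Pezzo side, and this is where I expect the real work to lie. Given $S/k$ a del Pezzo surface of degree $1$, the discussion preceding the theorem (based on \cite[Section 8.8.2--8.8.3]{dolgachev2012classical}) already produces the branch curve $C$ of the Bertini involution as a smooth genus $4$ curve together with the trigonal morphism $\pi_C\colon C \to \mbp^1$ induced by $|-\mathcal{K}_S|$, which is trigonal; uniqueness of the trigonal structure needs to be checked, but it follows because the sextic model in $\mbp(1:1:2:3)$ forces $C$ to carry the vanishing even theta characteristic $\mathcal{O}_C(-\mathcal{K}_S|_C)$ coming from restricting $-\mathcal{K}_S$, so $C$ satisfies (c), hence (a). Conversely, given a uniquely trigonal $C$, I would reconstruct $S$: the line bundle $\mathcal{L}_\theta$ together with $\mathcal{K}_C$ furnishes a graded ring whose $\proj$ is a sextic in $\mbp(1:1:2:3)$ — equivalently, one takes the double cover of the cone over a conic (which is $\mbp(1:1:2)$) branched along $C$ suitably embedded, blows up / resolves as needed, and checks it is a del Pezzo of degree $1$ with Bertini branch curve $C$. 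The uniqueness of $S$ over $k^\sep$ follows because $S$ is determined by $C$ plus the choice of vanishing even theta characteristic, and that choice is unique by (d). The main obstacle is making the reconstruction $C \rightsquigarrow S$ precise and checking smoothness/degree of the resulting surface; I would lean on \cite[Section 8.8]{dolgachev2012classical} and \cite[Corollary 2.2]{kulkarni2016explicit} rather than redo these computations, and I would be careful that all constructions are defined over $k$, not merely over $k^\sep$, which is why the uniqueness clause in (e) is stated only over the separable closure.
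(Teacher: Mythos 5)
Your outline is correct, but it takes a genuinely different route from the paper at the one place the paper actually supplies an argument, namely (c) $\Rightarrow$ (d). The paper delegates the equivalences (a) $\Leftrightarrow$ (b) $\Leftrightarrow$ (c) $\Leftrightarrow$ (e) to Vakil (\cite[Section 2.8 and Proposition 3.2]{vakil2001twelve}) and then proves (c) $\Rightarrow$ (d) with a short self-contained argument: if $\mathcal{L}_{\theta_1},\mathcal{L}_{\theta_2}$ are two vanishing even theta characteristics then $h^0(\mathcal{L}_{\theta_1}\otimes\mathcal{L}_{\theta_2})\geq 4$, so by Riemann--Roch this degree-$6$ bundle must be $\mathcal{K}_C$, and then $\mathcal{L}_{\theta_1}\otimes\mathcal{L}_{\theta_2}\iso\mathcal{L}_{\theta_1}^{\otimes 2}$ forces $\mathcal{L}_{\theta_1}\iso\mathcal{L}_{\theta_2}$. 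Your proposal instead derives (d) by routing through (a) and (b): you identify the vanishing theta characteristic as the unique $g^1_3$ (noting $h^0=2$ by Clifford) and get uniqueness of the theta characteristic from uniqueness of the pencil. This works, but it makes (d) dependent on having already established (a) $\Leftrightarrow$ (b) $\Leftrightarrow$ (c), whereas the paper's tensor-product argument is modular and does not presuppose the other equivalences. You also propose to re-derive (a) $\Leftrightarrow$ (b) and the del Pezzo side (e) from scratch, which is more work than the paper's bare citation of Vakil (and of Dolgachev and \cite{kulkarni2016explicit} in the preamble).

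Two minor flags. First, the parenthetical ``the rank of the canonical quadric $Q$ is $6-2h^0(\mathcal{L}_\theta)$-type data'' is not correct as numerology (the quadric has rank $4$ when $C$ has no vanishing even theta characteristic and rank $3$ when it has one); you can safely drop that remark and argue just via the number of rulings. Second, your sketch of (c) $\Rightarrow$ (a) (``if there were two trigonal pencils the generic behaviour would give a smooth quadric, contradicting the argument below'') should be tightened: if $|D_1|,|D_2|$ are two distinct $g^1_3$'s then $D_1+D_2\sim\mathcal{K}_C$ with $D_1\not\sim D_2$, so neither can be a theta characteristic; a vanishing even theta characteristic has $h^0=2$ and hence is a $g^1_3$, which would then coincide with one of the $D_i$ and force $D_1\sim D_2$, a contradiction. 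With that filled in, the cycle closes.
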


	\begin{proof}
		Parts (a), (b), (c), and (e) can be found in  \cite[Section 2.8]{vakil2001twelve} and \cite[Proposition 3.2]{vakil2001twelve}. Part (c) follows trivially from part (d). To show (c) implies (d), if $\mathcal{L}_{\theta_1}$ and $\mathcal{L}_{\theta_2}$ are two line bundles corresponding to vanishing theta characteristics, then $h^0(C_{k^\sep}, \mathcal{L}_{\theta_1} \otimes \mathcal{L}_{\theta_2}) \geq 4$. The Riemann-Roch theorem shows that $\mathcal{L}_{\theta_1} \otimes \mathcal{L}_{\theta_2}$ is equivalent to the canonical bundle, so it follows after a simple calculation that $\mathcal{L}_{\theta_1} \iso \mathcal{L}_{\theta_2}$.  
	\end{proof}

	Let $S/k$ be a del Pezzo surface of degree $1$. It is a classical fact that $\Pic S_{k^\sep} \iso \mbz^9$ as a group, and that the intersection pairing $\gen{\cdot, \cdot}$ on $S$ imbues $\Pic S_{k^\sep}$ with the structure of a lattice. As the canonical class $\mathcal{K}_S$ is always defined over $k$, the sublattice
		\[
			(\Pic S_{k^\sep})^\perp := \{x \in \Pic S_{k^\sep} : \gen{x , \mathcal{K}_S} = 0 \}
		\] 
	is also defined over $k$. Moreover, it is a classical fact that $(\Pic S_{k^\sep})^\perp$ is isomorphic to a simply laced lattice of Dynkin type $E_8$. We denote the Weyl group of the lattice of Dynkin type $E_8$ by $W_{E_8}$ and note that $W_{E_8}$ is also the isometry group for this lattice. Additionally, the Bertini involution of $S$ acts on $(\Pic S_{k^\sep})^\perp$ via the unique nontrivial element of the centre of $W_{E_8}$. We label the elements of the centre of $W_{E_8}$ by $\pm 1$ and we let $W_+ := W_{E_8}/\gen{\pm 1}$. For details, see \cite[Section 2.2]{zarhin2008delpezzo}.
	
	\begin{definition}
		A \emph{marked del Pezzo surface} of degree $1$ is a pair $(S/k, B)$ with $S/k$ a del Pezzo surface of degree $1$ and $B = \{e_1, \ldots, e_8 \}$ a basis for $\Pic(S_{k^\sep})^\perp$ such that $\gen{e_i, e_i} = -1$ for each $i$ and $\gen{e_i, e_j}=0$ for all $i \neq j$. We refer to the choice of basis $B$ as a \emph{marking}.
	\end{definition}
	
	We will make use of the following two classical results regarding degree $1$ del Pezzo surfaces. These are of course analogous to the results used in ~\cite{thorne2016arithmetic} for degree $2$ del Pezzo surfaces. 
	
	\begin{proposition} \label{prop: del Pezzo 1}
		Let $C$ be the branch curve of the Bertini involution of a degree $1$ del Pezzo surface $S/k$ and let $\Lambda := (\Pic S_{k^\sep})^\perp$. Then there is a commutative diagram of finite $k$-groups
		\[
			\xym{
				\Lambda^\vee/2\Lambda^\vee \ar[r]^-{\iso} & (\Pic(C)/\mbz \theta_C)[2] \\
				N_C \ar@{^{(}->}[u] \ar[r]^-\iso & \Pic^0(C)[2] \ar@{^{(}->}[u] 
			}
		\]
		where $N_C$ is the image of 
			\mapdef{\gamma}{\Lambda/2\Lambda}{\Lambda^\vee/2\Lambda^\vee}{v}{\gen{v, \cdot}}
		and $\theta_C$ is the divisor class of the vanishing even theta characteristic of $C$. In particular, there is a canonical surjection $\Lambda/2\Lambda \ra \Pic^0(C)[2]$.
	\end{proposition}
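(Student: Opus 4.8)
Here is the plan. The whole statement will be built from the single restriction homomorphism $r:=\iota^*\colon \Pic S_{k^\sep}\ra \Pic C_{k^\sep}$ attached to the closed immersion $\iota\colon C\inj S$. I would first realize both rows of the diagram via $r$, then dispatch all the formal bookkeeping, and finally reduce the entire proposition to the single assertion that $r$ induces an \emph{isomorphism} $\Lambda/2\Lambda\iso J_C[2]$. Since $\iota$, the class $\mathcal{K}_S$, the sublattice $\Lambda=\mathcal{K}_S^\perp$, and (by uniqueness, Theorem~\ref{thm: uniquely trigonal genus 4}(d)) the class $\theta_C$ are all defined over $k$, every map produced below is automatically $G_k$-equivariant, so the ``finite $k$-group'' aspect of the statement is free; I work over $k^\sep$ from here on.

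The two geometric inputs about $r$ are as follows. (i) $r(\mathcal{K}_S)=-\theta_C$: the anticanonical pencil $|{-}\mathcal{K}_S|$ restricts to the trigonal map $\pi_C$ (discussion before Theorem~\ref{thm: uniquely trigonal genus 4}), so $|{-}r(\mathcal{K}_S)|$ is a $g^1_3$ on $C$, necessarily the unique one $|\theta_C|$; and applying adjunction on $S$ to $C\sim -3\mathcal{K}_S$ (which holds because $C$ is the zero locus of the weight-$3$ coordinate in the model $S\ssq\mbp(1{:}1{:}2{:}3)$, where $\mbo(1)|_S=-\mathcal{K}_S$) gives $-2\,r(\mathcal{K}_S)=\mathcal{K}_C$, so $-r(\mathcal{K}_S)$ is exactly the vanishing even theta characteristic. (ii) For $\lambda\in\Lambda$ one has $r(\lambda)\in J_C[2]$: the Bertini involution $\sigma$ fixes $C$ pointwise, so $\sigma\circ\iota=\iota$, hence $r\circ\sigma^*=r$; and $\sigma^*$ fixes $\mathcal{K}_S$ while acting by $-1$ on $\Lambda$ (the central element of $W_{E_8}$ is $-\id$), so $r(\lambda)=r(\sigma^*\lambda)=-r(\lambda)$. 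Using $\langle\mathcal{K}_S,\mathcal{K}_S\rangle=1$ and the unimodularity of $E_8$ to split $\Pic S_{k^\sep}=\mbz\mathcal{K}_S\oplus\Lambda$, facts (i)--(ii) say that $r$ induces compatible homomorphisms $\bar r\colon \Lambda/2\Lambda\ra J_C[2]$ and $\bar r'\colon \Lambda/2\Lambda\cong \Pic S_{k^\sep}/\mbz\mathcal{K}_S\ra (\Pic C_{k^\sep}/\mbz\theta_C)[2]$, with $\bar r'$ equal to $\bar r$ composed with the natural map $J_C[2]\ra \Pic C_{k^\sep}/\mbz\theta_C$.

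Next I would clear away the formal parts. Because $E_8$ is even and unimodular, the mod-$2$ intersection form on $\Lambda/2\Lambda$ is a non-degenerate alternating form, so $\gamma\colon \Lambda/2\Lambda\ra\Lambda^\vee/2\Lambda^\vee$ is an isomorphism; hence $N_C=\Lambda^\vee/2\Lambda^\vee$ and the two vertical maps of the diagram are isomorphisms. On the curve side, the degree map descends to $\Pic C_{k^\sep}/\mbz\theta_C\ra\mbz/3$; if $[L]$ is $2$-torsion then $2L=n\theta_C$, which forces $3\mid\deg L$, say $\deg L=3m$ and $n=2m$, so $L-m\theta_C\in J_C[2]$ represents $[L]$; together with the obvious injectivity of $J_C[2]\ra\Pic C_{k^\sep}/\mbz\theta_C$ this gives $J_C[2]\iso (\Pic C_{k^\sep}/\mbz\theta_C)[2]$. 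Thus all four corners are $8$-dimensional $\mbf_2$-spaces, the square (all of whose arrows come from $r$, twisted by the isomorphism $\gamma$ along the left edge) commutes, and the whole proposition — including the ``in particular'' clause, the composite $\Lambda/2\Lambda\xrightarrow{\gamma}N_C\iso J_C[2]$ — follows once $\bar r$ is shown to be an isomorphism.

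The crux, and the step I expect to be the main obstacle, is the bijectivity of $\bar r$; since $|\Lambda/2\Lambda|=2^8=|J_C[2]|$ it suffices to prove surjectivity, and here I would invoke the classical geometry of degree-$1$ del Pezzo surfaces (see \cite[\S8.8]{dolgachev2012classical}): the $240$ exceptional curves of $S_{k^\sep}$ are the classes $e=-\mathcal{K}_S+\lambda$ with $\lambda=e+\mathcal{K}_S$ a root of $\Lambda$, $\sigma$ interchanges the curves in each Bertini pair $\{e,\sigma^*e\}$ (classes summing to $-2\mathcal{K}_S$), each $e$ meets $C$ in a degree-$3$ divisor $D_e$ with $r(\mbo_S(e))=\mbo_C(D_e)=:\vartheta_e$ a theta characteristic (as $2\vartheta_e=r(\mbo_S(-2\mathcal{K}_S))=\mathcal{K}_C$), and $e\mapsto\vartheta_e$ is a bijection between the $120$ Bertini pairs and the $120$ odd theta characteristics of $C$. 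Since $\bar r(\lambda\bmod 2)=\vartheta_e\otimes\theta_C^{-1}$, the image of $\bar r$ is the subgroup of $J_C[2]$ generated by $\{\vartheta-\theta_C:\vartheta\text{ odd}\}$; identifying theta characteristics with $J_C[2]$ by $\vartheta\mapsto\vartheta-\theta_C$, the parity function becomes (using $h^0(\theta_C)$ even) exactly the indicator of $q^{-1}(1)$ for a non-degenerate quadratic form $q$ refining the Weil pairing (Mumford), and an elementary argument shows $q^{-1}(1)$ cannot lie in a hyperplane $H$ when $g\ge 2$: otherwise $q$ vanishes on the complementary coset, forcing $q(z)=\langle x,z\rangle$ for all $z\in H$ and all $x\notin H$, whence $H\ssq H^\perp$, impossible as $\dim H^\perp=1$ while $\dim H=2g-1\ge 3$. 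Therefore $q^{-1}(1)$ spans $J_C[2]$, $\bar r$ is surjective, and hence an isomorphism. (One could instead aim at injectivity — a root in $\ker\bar r$ would force some $\vartheta_e$ to be the even class $\theta_C$, contradicting its oddness — but controlling the non-root classes mod $2$ seems to require the same classical input, so I would argue surjectivity as above.)
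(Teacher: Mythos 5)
The paper does not prove this proposition; it defers to Zarhin (\cite[Lemma~2.4, Lemma~2.7, Theorem~2.10]{zarhin2008delpezzo}). Your self-contained argument is correct and is built on the same classical pipeline that Zarhin uses: the restriction homomorphism $r = \iota^*$, the identification $r(-\mathcal{K}_S) = \theta_C$ (your adjunction computation from $C \sim -3\mathcal{K}_S$ together with uniqueness of the $g^1_3$), and the fact that the Bertini involution fixes $C$ pointwise while acting as $-\id$ on $\Lambda$, which is what forces $r(\Lambda) \subseteq J_C[2]$. All of that, the unimodularity of $E_8$ making $\gamma$ an isomorphism, and the diagram-chase reducing everything to the bijectivity of $\bar r\colon \Lambda/2\Lambda \ra J_C[2]$, is clean and complete. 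The one place you lean on an external classical fact is the bijection between the $120$ Bertini pairs of exceptional curves and the $120$ odd theta characteristics (\cite[\S 8.8.3]{dolgachev2012classical}); that citation is fair and is indeed the same input Zarhin uses, but note that you only use the weaker statement that every $\vartheta_e$ is odd and every odd theta characteristic is some $\vartheta_e$. Your finishing move — that $q^{-1}(1)$ cannot lie in a hyperplane because otherwise $H \ssq H^\perp$ with $\dim H = 2g-1 > 1 = \dim H^\perp$ — is a nice elementary way to get the spanning statement, and then a cardinality count yields bijectivity; this is a little slicker than an explicit blowup computation. One small observation worth recording: because $E_8$ is unimodular, in this case both vertical arrows of the diagram are in fact isomorphisms, so the ``injection'' in the statement is only genuinely a proper inclusion in the $E_6$ and $E_7$ analogues appearing for del Pezzo surfaces of higher degree.
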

	
	\begin{proposition} \label{prop: del Pezzo 2}
		Let $\phi\: \Lambda/2\Lambda \ra \Pic^0(C)[2]$ be the surjection of Proposition \ref{prop: del Pezzo 1}. Let $\gen{\cdot, \cdot}_2$ be the natural symplectic form on $\Lambda/2\Lambda$ induced by the form $\gen{\cdot, \cdot}$ on the even lattice $\Lambda$ and let $e_2(\cdot, \cdot)$ be the Weil pairing on $\Pic^0(C)[2]$. Then for all $v_1, v_2 \in \Lambda/2\Lambda$ we have
		\[
			\gen{ v_1, v_2 }_2 = e_2( \phi v_1, \phi v_2).	
		\]
	\end{proposition}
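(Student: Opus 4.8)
The plan is to deduce the equality of the two symplectic forms from an equality of their quadratic refinements on $\Lambda/2\Lambda$. First I would recall the geometry underlying Proposition~\ref{prop: del Pezzo 1}. Write $\mathcal{K}_S$ for the canonical class of $S$ and $\beta$ for the Bertini involution. The branch curve satisfies $C \sim -3\mathcal{K}_S$, so by adjunction $\mathcal{K}_C = (\mathcal{K}_S+C)|_C = -2\mathcal{K}_S|_C$, and hence $\theta_C = (-\mathcal{K}_S)|_C$ is the vanishing even theta characteristic of Theorem~\ref{thm: uniquely trigonal genus 4}; it has degree $3$, and $h^0(C,\theta_C)=2$ because the anticanonical pencil on $S$ restricts to the $g^1_3$ on $C$. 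The surjection $\phi$ of Proposition~\ref{prop: del Pezzo 1} is induced by restriction of divisor classes, $\phi(\overline{D}) = D|_C$; this lands in $J_C[2]$ precisely because $\beta$ fixes $C$ pointwise while acting by $-1$ on $\Lambda = (\Pic S_{k^\sep})^\perp$, so $2(D|_C) = (D+\beta^*D)|_C = 0$. Since $\Lambda$ is the $E_8$ lattice, $\dim_{\mathbb{F}_2}\Lambda/2\Lambda = 8 = 2g = \dim_{\mathbb{F}_2} J_C[2]$, so $\phi$ is in fact an isomorphism. Because $\Lambda$ is an even lattice, $Q(\overline{v}) := \tfrac12\langle v,v\rangle \bmod 2$ is a well-defined quadratic form on $\Lambda/2\Lambda$ whose polarization is $\langle\cdot,\cdot\rangle_2$; and by Mumford's theorem on theta characteristics, $q(x) := h^0(C,\theta_C+x) + h^0(C,\theta_C) \bmod 2$ is a quadratic form on $J_C[2]$ whose polarization is the Weil pairing $e_2$. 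Since $\phi$ is additive, polarizing reduces the proposition to the single congruence
\[
	\tfrac12\langle D,D\rangle \ \equiv\ h^0\bigl(C,\ (-\mathcal{K}_S+D)|_C\bigr) \pmod 2 \qquad\text{for every } D \in \Lambda,
\]
where I have used $h^0(C,\theta_C)=2$ and $\theta_C + \phi(\overline{D}) = (-\mathcal{K}_S+D)|_C$.

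To prove this congruence I would note that both sides depend only on $\overline{D}$, so $D$ may be taken minimal in its coset, and then invoke the classical structure of $E_8$ modulo $2E_8$: $(\Lambda/2\Lambda, Q)$ is a non-degenerate quadratic $\mathbb{F}_2$-space of plus type, and every class contains a vector $D$ with $\langle D,D\rangle \in \{0,-2,-4\}$ — namely the zero class, the $120$ classes of roots (the anisotropic classes), and the $135$ remaining nonzero classes, each containing vectors of norm $-4$ (the isotropic classes). I would then verify the congruence in each case. For $D=0$ it reads $0 \equiv h^0(C,\theta_C) = 2$. For a root $D=\alpha$, the class $\alpha - \mathcal{K}_S$ satisfies $(\alpha-\mathcal{K}_S)^2 = (\alpha-\mathcal{K}_S)\cdot\mathcal{K}_S = -1$, so by Riemann--Roch on $S$ it is effective, represented by the unique $(-1)$-curve $E$ in its class; since $E\cdot C = 3$ and $C\not\subseteq E$, the restriction $(-\mathcal{K}_S+\alpha)|_C = E|_C$ is an effective divisor class of degree $3$, and it cannot be $\sim \theta_C$ (that would force $\phi(\overline{\alpha})=0$, hence $\overline{\alpha}=0$), so by the uniqueness of the $g^1_3$ on $C$ (Theorem~\ref{thm: uniquely trigonal genus 4}) we get $h^0(C,E|_C) = 1 \equiv \tfrac12\langle\alpha,\alpha\rangle$. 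Finally, for $D$ with $\langle D,D\rangle = -4$ one computes $(-\mathcal{K}_S+D)^2 = 1 + \langle D,D\rangle = -3$ while $(-\mathcal{K}_S+D)\cdot(-\mathcal{K}_S) = 1$; an effective divisor meeting the ample class $-\mathcal{K}_S$ with multiplicity $1$ would be a single reduced irreducible curve of non-negative arithmetic genus, which contradicts self-intersection $-3$, so $-\mathcal{K}_S+D$ is not effective and $h^0(S,-\mathcal{K}_S+D)=0$. Riemann--Roch gives $\chi(S,-\mathcal{K}_S+D) = 2 + \tfrac12\langle D,D\rangle = 0$; Serre duality identifies $h^2(S,-\mathcal{K}_S+D)$ with $h^0(S,2\mathcal{K}_S-D)$, which vanishes because $2\mathcal{K}_S-D$ has negative intersection with $-\mathcal{K}_S$; hence $h^1(S,-\mathcal{K}_S+D)=0$, and then $h^1(S,2\mathcal{K}_S+D) = h^1(S,-\mathcal{K}_S-D) = h^1(S,-\mathcal{K}_S+D) = 0$ by Serre duality and the $\beta^*$-equivariance $\beta^*\mathcal{O}_S(-\mathcal{K}_S-D)=\mathcal{O}_S(-\mathcal{K}_S+D)$. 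Feeding these vanishings into the restriction sequence
\[
	0 \to \mathcal{O}_S(2\mathcal{K}_S+D) \to \mathcal{O}_S(-\mathcal{K}_S+D) \to \mathcal{O}_C\bigl((-\mathcal{K}_S+D)|_C\bigr) \to 0
\]
forces $h^0\bigl(C,(-\mathcal{K}_S+D)|_C\bigr) = 0 \equiv \tfrac12\langle D,D\rangle$. This establishes $Q = q\circ\phi$, and polarizing (using that $\phi$ is additive) gives $\langle v_1,v_2\rangle_2 = e_2(\phi v_1,\phi v_2)$.

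I expect the main obstacle to be the reduction step together with the norm-$(-4)$ case. One needs the coset structure of $E_8/2E_8$ precisely enough to know that representatives of norms $0,-2,-4$ exhaust all classes — equivalently, that the anisotropic classes are exactly the classes of roots and the nonzero isotropic classes are exactly the classes of norm-$(-4)$ vectors — and one must arrange the cohomological vanishing on $S$ so that the \emph{parity} of $h^0\bigl(C,(-\mathcal{K}_S+D)|_C\bigr)$, and not merely its Euler characteristic (which vanishes in every case and so gives nothing by itself), is pinned down. The remaining ingredients — Mumford's identification of the polarization of the theta-characteristic quadratic form with the Weil pairing, and the uniqueness of the trigonal pencil from Theorem~\ref{thm: uniquely trigonal genus 4} — are standard. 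As an alternative, once $\phi$ is known to be an isomorphism the asserted identity of forms is a closed condition on the irreducible moduli of degree-$1$ del Pezzo surfaces and could be checked on a single explicit surface, but the quadratic-refinement argument above is uniform and seems preferable.
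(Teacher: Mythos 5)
The paper does not give its own argument for Proposition~\ref{prop: del Pezzo 2}; it cites \cite[Lemma 2.4, Lemma 2.7, Theorem 2.10]{zarhin2008delpezzo} and leaves the proof there. So what you have written is a genuine proof rather than a restatement of one in the paper, and it deserves to be judged on its own. Your route is the natural one and I believe it is correct: since $\Lambda$ is even, $Q(\overline{v})=\tfrac12\langle v,v\rangle \bmod 2$ is a quadratic refinement of $\langle\cdot,\cdot\rangle_2$, and the Riemann--Mumford parity $q(x)=h^0(C,\theta_C+x)+h^0(C,\theta_C)\bmod 2$ is a quadratic refinement of the Weil pairing; $\phi$ is additive and in the $E_8$ case an isomorphism (since $E_8$ is unimodular, $N_C=\Lambda^\vee/2\Lambda^\vee$ has dimension $8=2g$), so it suffices to prove $Q=q\circ\phi$. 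Your case analysis over representatives of $E_8/2E_8$ of norm $0,-2,-4$ is sound, and the three verifications check out: the trivial case; the root case via the $(-1)$-curve $E$ in $|\alpha-\mathcal{K}_S|$ and the uniqueness of the $g^1_3$; and the norm-$(-4)$ case, where the non-effectivity argument (an effective class with $(-\mathcal{K}_S)$-degree $1$ would be a reduced irreducible curve, contradicting $p_a=-1$), the Riemann--Roch/Serre-duality vanishings, and the $\beta^*$-equivariance $\beta^*\mathcal{O}_S(-\mathcal{K}_S-D)\cong\mathcal{O}_S(-\mathcal{K}_S+D)$ all combine correctly with the restriction sequence $0\to\mathcal{O}_S(2\mathcal{K}_S+D)\to\mathcal{O}_S(-\mathcal{K}_S+D)\to\mathcal{O}_C((-\mathcal{K}_S+D)|_C)\to 0$ to force $h^0=0$. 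You are right that the Euler characteristic alone is useless here and that the cohomological vanishing is what pins down the parity.

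Two small points worth making explicit if this were to be written up. First, the structural fact about $E_8/2E_8$ --- that the $256$ classes split as the zero class, $120$ anisotropic classes each containing exactly the pair $\pm\alpha$ of roots, and $135$ nonzero isotropic classes each represented by norm-$(-4)$ vectors --- is standard but should be cited or proved (it is equivalent to $(\Lambda/2\Lambda,Q)$ being the plus-type form). Second, your count is internally consistent with theta characteristics ($136$ even, $120$ odd on a genus-$4$ curve), which is a useful sanity check. The alternative you mention at the end (specialize to one surface and invoke irreducibility of moduli) is a plausible strategy but would need care to formulate the compatibility as a closed condition; the refinement argument you actually give is cleaner. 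Overall: your proposal supplies what the paper delegates to Zarhin, by what is almost certainly the same underlying mechanism (parity of theta characteristics), and it is complete.
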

	
	For the proofs of Proposition \ref{prop: del Pezzo 1} and Proposition \ref{prop: del Pezzo 2}, see \cite[Lemma 2.4]{zarhin2008delpezzo}, \cite[Lemma 2.7]{zarhin2008delpezzo}, and \cite[Theorem 2.10]{zarhin2008delpezzo}. From these propositions we see that a marking of a degree $1$ del Pezzo surface, up to $\pm 1$, uniquely determines a marking of the $2$-torsion of its branch curve.

\subsection{Points on maximal tori in $E_8$}
	
	Theorem \ref{thm: uniquely trigonal genus 4} provides a relationship between the uniquely trigonal genus $4$ curves and del Pezzo surfaces of degree $1$. Similar to the relationship between plane quartic curves and degree $2$ del Pezzo surfaces in \cite{looijenga1993cohomology}, we can ask what the datum of an anti-canonical section of a degree $1$ del Pezzo surface marks on the associated uniquely trigonal genus $4$ curve. Corollary \ref{cor: e8 3.4} provides the answer to this question.
	
	Before progressing, it is useful to have an explicit description for the root lattice $\Lambda$ of type $E_8$. We use the following description which is from \cite[Section 8.2.2]{dolgachev2012classical}. Let $\{l, e_1, \ldots, e_8 \}$ be the free generators for a $9$-dimensional lattice where the generators are pairwise orthogonal, $\gen{e_i , e_i} = -1$ for each $i$, and $ \gen{l , l} = 1$. One recovers $\Lambda$ as the sublattice spanned by elements satisfying $\gen{\alpha , \alpha} = -2$. The roots of $\Lambda$ are exactly those elements such that $\gen{\alpha , \alpha} = -2$.  
	
	\begin{remark}
		Let $\Lambda$ be a root lattice of type $E_8$ and let $T = \Hom_k(\Lambda,\mbg_m)$ be a torus. Then $\chi \in T^\rss$ if and only if $\chi$ does not lie on a root hyperplane, which is equivalent to $\chi(\alpha) \neq 1$ for every root $\alpha$ \cite[Section 1]{looijenga1993cohomology}.  
	\end{remark}

	We prove an analogue of ~\cite[Proposition 1.8]{looijenga1993cohomology} for degree $1$ del Pezzo surfaces. We let $\wtilde \mcDP_1$ denote the moduli space of marked degree 1 del Pezzo surfaces and let $\wtilde \mcDP_1(\node)$ denote the moduli space of marked degree 1 del Pezzo surfaces together with a singular nodal anti-canonical section.

	\begin{proposition} \label{prop: torusE8}
		If $k=k^\sep$, then there is a $W_+$-equivariant isomorphism
			\[
				\wtilde \mcDP_1(\node) \iso T^{\rss} / \gen{\pm 1}
			\]
		with $T := \Hom(\Lambda, \Gm)$ and $\Lambda$ a split root lattice of type $E_8$.
	\end{proposition}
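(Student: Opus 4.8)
The plan is to construct the \emph{period morphism} directly, exactly as in the proof of \cite[Proposition~1.8]{looijenga1993cohomology} for degree $2$ del Pezzo surfaces, and to verify it is an isomorphism onto $T^\rss/\gen{\pm1}$. Given a marked degree $1$ del Pezzo surface $(S,B)$ over $k=k^\sep$ and an irreducible nodal anticanonical curve $E\in|-\mathcal{K}_S|$, the curve $E$ is an integral curve of arithmetic genus $1$ with one node, so its generalized Jacobian $\Pic^0(E)$ is isomorphic to $\Gm$, canonically up to the inversion automorphism. Since $E\sim-\mathcal{K}_S$, restriction of line bundles $\Pic S_{k^\sep}\to\Pic E$ sends $x$ to a bundle of degree $x\cdot E=-\gen{x,\mathcal{K}_S}$, hence carries $\Lambda=(\Pic S_{k^\sep})^\perp$ into $\Pic^0(E)\iso\Gm$; composing with the marking $B$, which identifies $\Lambda$ with the standard $E_8$ root lattice, produces a character $\chi_{(S,B,E)}\in\Hom(\Lambda,\Gm)=T$, well defined up to inversion. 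This defines $\Phi\colon\wtilde\mcDP_1(\node)\to T/\gen{\pm1}$, and $W_+$-equivariance is immediate: changing the marking by $w\in W_{E_8}$ changes $\chi$ by $w$, and the central element $-1\in W_{E_8}$ acts on $T$ by inversion.

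Next I would check that the image lies in $T^\rss/\gen{\pm1}$. For a root $\alpha\in\Lambda$, the restriction sequence $0\to\mathcal{O}_S(\alpha+\mathcal{K}_S)\to\mathcal{O}_S(\alpha)\to\mathcal{O}_E(\alpha|_E)\to0$, together with $H^0(\mathcal{O}_S(\alpha+\mathcal{K}_S))=0$ (because $(\alpha+\mathcal{K}_S)\cdot(-\mathcal{K}_S)=-1<0$ and $-\mathcal{K}_S$ is ample) and the short Riemann--Roch and Serre-duality computation $h^1(\mathcal{O}_S(\alpha+\mathcal{K}_S))=h^0(\mathcal{O}_S(-\alpha))$, shows that $\chi(\alpha)=1$ would force one of $\pm\alpha$ to be effective. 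But on a degree $1$ del Pezzo surface no nonzero class of $\mathcal{K}_S^\perp$ is effective: if $\beta=\sum a_iC_i$ with $a_i\geq0$ and $C_i$ irreducible then $0=\beta\cdot(-\mathcal{K}_S)=\sum a_i\,C_i\cdot(-\mathcal{K}_S)$ forces all $a_i=0$ since $-\mathcal{K}_S$ is ample. Hence $\chi(\alpha)\neq1$ for every root.

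For bijectivity I would pass to the blow-up model. Every integral nodal plane cubic $\bar E\subset\mbp^2$ is projectively unique, its smooth locus is isomorphic to $\Gm$ via the normalization, and $\Aut_{\PGL_3}(\bar E)\iso\mu_3\rtimes\gen{\text{inversion}}$ is the symmetric group on the three flexes of $\bar E$. Normalizing $\bar E^{\mathrm{sm}}\iso\Gm$ so that collinear triples have product $1$, a marked degree $1$ del Pezzo surface with nodal anticanonical section is, after a choice of homogeneous coordinates, the blow-up of $\mbp^2$ at eight points $p_1,\dots,p_8\in\bar E^{\mathrm{sm}}$ in general position together with an ordering, and if $l,e_1,\dots,e_8$ denotes the standard basis attached to the marking then $\Phi$ sends it to the character $\chi$ with $\chi(e_i-e_j)=p_ip_j^{-1}$ and $\chi(l-e_i-e_j-e_k)=(p_ip_jp_k)^{-1}$. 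These relations are consistent, and for any $\chi\in T$ one can solve for the $p_i$, the solution being unique up to a common scaling by $\mu_3$ --- which is precisely the translation part of $\Aut_{\PGL_3}(\bar E)$ and acts trivially on $\chi$ because $\Lambda=\mathcal{K}_S^\perp$ kills the total-degree direction. Moreover the eight points are in general position (equivalently $\mathrm{Bl}_{p_\bullet}\mbp^2$ is an honest, not merely weak, del Pezzo surface) exactly when no root $\alpha$ satisfies $\chi(\alpha)=1$, since the $240$ roots of the $E_8$ lattice in this model correspond bijectively to the forbidden collinearity, conic, and singular-cubic configurations. Accounting for the two inversions --- that of $\bar E$ and that of $\Pic^0(E)\iso\Gm$ --- as the $\gen{\pm1}$, this makes $\Phi$ a bijection on $k^\sep$-points; as the target $T^\rss/\gen{\pm1}$ is normal, $\Phi$ is an isomorphism.

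I expect the last step to be the real obstacle. Computing $\Aut_{\PGL_3}(\bar E)$ and checking that only its inversion acts nontrivially on characters of $\Lambda$, and matching ``eight points in general position'' with ``$\chi$ regular semisimple'' so that the image is exactly $T^\rss/\gen{\pm1}$, is the degree $1$ counterpart of the combinatorial core of \cite[Section~1]{looijenga1993cohomology}; the differences from the $E_7$ case are the number of blown-up points and the use of the Bertini involution in place of the Geiser involution. One should also keep track of irreducibility of $\wtilde\mcDP_1(\node)$ --- which holds by monodromy permuting the nodal members of the anticanonical pencil --- so that a bijective morphism to a normal variety is genuinely an isomorphism rather than merely a normalization.
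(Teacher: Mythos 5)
Your proposal is correct and reaches the same conclusion, but you construct the map in the direction opposite to the paper and verify the regularity of the image by a different device. The paper starts from a regular semisimple $\chi\in T^\rss$, builds eight points $P_1,\dots,P_8$ on an abstract nodal genus-zero curve $K$ from the values $\chi(e_i-e_j)$, embeds $K$ into $\mbp^2$ via the linear system of $\chi(l-e_1-e_2-e_3)+P_1+P_2+P_3$, and checks by direct linear-equivalence computations on $K$ that the $P_i$ are in general position precisely when $\chi(\alpha)\neq1$ for every root $\alpha$; the marked surface is then the blow-up. You instead construct the forward period map by restricting $\Lambda=(\Pic S_{k^\sep})^\perp$ to $\Pic^0(E)\iso\Gm$, and you prove the image lands in $T^\rss$ by the restriction sequence $0\to\mathcal{O}_S(\alpha+\mathcal{K}_S)\to\mathcal{O}_S(\alpha)\to\mathcal{O}_E(\alpha|_E)\to0$ together with $H^0(\mathcal{O}_S(\alpha+\mathcal{K}_S))=0$, Riemann--Roch, Serre duality, and the fact that no nonzero class in $\mathcal{K}_S^\perp$ is effective on a degree $1$ del Pezzo. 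Both arguments encode the same dictionary between the $240$ roots and the forbidden coincidence, collinearity, conic, and singular-cubic configurations, and your bijectivity step ultimately passes to the same blow-up model as the paper. Your cohomological argument is arguably cleaner and more intrinsic (it never needs an explicit embedding of $K$), while the paper's explicit construction of the inverse has the advantage of making surjectivity and the $\gen{\pm1}$ identification essentially immediate. The extra attention you pay to $\Aut_{\PGL_3}(\bar E)$, and in particular to the $\mu_3$ of translations acting trivially on characters of $\mathcal{K}_S^\perp$, is a real refinement that the paper handles only implicitly through the choice of the degree-$3$ linear system up to $\PGL_3$; it is worth recording.
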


	\begin{proof}
		Let $K$ be an abstract nodal genus $0$ curve. Then $\Pic^0(K)$ is isomorphic to $\mbg_m$ in a unique way up to inversion. Fix a choice of $P_1 \in K_{\mathrm{reg}}$ and corresponding isomorphism $\tau_{P_1}\: \Pic^0(K) \ra \Pic^1(K) \iso K_{\reg}$ given by translation by $P_1$. 
		
		\bigskip
		Let $\chi \in T^{\mathrm{rss}}$ be a point of the torus $T = \Hom(\Lambda, \mbg_m)$. For each $i \in \{1, \ldots, 7\}$ we define $P_{i+1}$ to be the unique point of $K_\reg(k^\sep)$ such that the divisor $P_{i+1}$ is linearly equivalent to $P_i + \chi(e_{i+1}-e_i)$. Notice that for any $i,j \in \{1, \ldots, 8\}$ we have that $(P_i - P_j) = \chi(e_i - e_j)$. 

		The linear system associated to the degree $3$ divisor
			\[
				D := \chi(l - e_1 - e_2 - e_3) + P_1 + P_2 + P_3
			\] 
		determines an embedding of $K$ into the projective plane. We claim under this embedding that the points $P_1, \ldots, P_8$ are in general position. 
		
		\bigskip
		
		We see that two points coincide if and only if $\chi(e_i - e_j) = \id$ for some distinct $i,j$. We see that $P_{i_1}, P_{i_2}, P_{i_3}$ lie on a line if and only if we have
			\begin{align*}
				\div h &=  D - (P_{i_1} + P_{i_2} + P_{i_3}) \\
					&= \chi(l - e_1 - e_2 - e_3) + P_1 - P_{i_1} + P_2 - P_{i_2} + P_3 - P_{i_3} \\
					&= \chi(l - e_1 - e_2 - e_3) + \chi(e_{i_1} - e_1) + \chi(e_{i_2} - e_1) + \chi(e_{i_3} - e_1) \\
					&= \chi(l - e_{i_1} - e_{i_2} - e_{i_3})
			\end{align*}
		for some $h \in k(K)$. Similar calculations show that six of these points lie on a conic if and only if $\chi(2l - e_{i_1} - \ldots - e_{i_6}) = \id$ for some distinct $i_j \in \{1, \ldots, 8\}$ and there is a cubic passing through all eight of these points with a singularity at one of them if and only if $\chi(3l - e_{i_1} - \ldots - e_{i_7} - 2e_{i_8}) = \id$ for some distinct $i_j \in \{1, \ldots, 8\}$. To recapitulate, the points $P_1, \ldots, P_8$ lie in general position if and only if $\chi(\alpha) \neq 1$ for each root $\alpha$ of $\Lambda$. 
		
		\bigskip
		
		The blow-up of $\mbp^2$ at the eight points $P_1, \ldots, P_8$ is a marked degree 1 del Pezzo surface with a marked nodal anti-canonical curve $(S, \{e_1, \ldots, e_8 \}, K')$, where $K'$ is the strict transform of $K$ under the blow-up. 
		
		If the construction above sends $\chi$ to $(S, \{e_1, \ldots, e_8 \}, K')$, then it sends $-\chi$ to $(S, \{\iota(e_1), \ldots, \iota(e_8)\}, K')$, where $\iota$ is the Bertini involution of $S$. It is clear that every marked degree 1 del Pezzo surface with a marked nodal section is obtained by this construction in a unique way up to inversion on $T$. Additionally, this map is $W_{E_8}$-equivariant, so we obtain a $W_+$-equivariant map $T/\gen{\pm 1} \ra \wtilde \mcDP_1(\node)$.
	\end{proof}

	\begin{corollary} \label{cor: e8 3.4}
		There is a $W_+$-equivariant isomorphism from $T^\rss$ to $\mathcal{T}_4^{\sram}(2)$.
	\end{corollary}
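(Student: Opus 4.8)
The plan is to deduce the corollary from Proposition~\ref{prop: torusE8} together with the classical correspondence between a degree $1$ del Pezzo surface and the branch curve of its Bertini involution. I would produce a $W_+$-equivariant isomorphism of moduli functors $\mcT_4^{\sram}(2) \iso \wtilde \mcDP_1(\node)$ and then compose with Proposition~\ref{prop: torusE8}. We work over $k = k^\sep$, this being the hypothesis of Proposition~\ref{prop: torusE8} and the setting in which a uniquely trigonal genus $4$ curve determines a del Pezzo surface up to isomorphism (Theorem~\ref{thm: uniquely trigonal genus 4}(e)).

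First I would write down the assignment on $k^\sep$-points. Given a marked degree $1$ del Pezzo surface with a nodal anti-canonical section $(S, B, K)$, let $C$ be the branch curve of the Bertini involution; by Theorem~\ref{thm: uniquely trigonal genus 4} it is a uniquely trigonal genus $4$ curve whose trigonal morphism $\pi_C$ is the restriction of the anti-canonical map $\pi_S$. The section $K$ lies over a point $p \in \mbp^1(k^\sep)$ over which $\pi_C$ ramifies, and I claim (i) that the ramification point $Q$ of $\pi_C$ over $p$ is canonically the node of $K$ and that $\pi_C^{-1}(p) = 2Q + R$ with $R \ne Q$, so that $(C,Q) \in \mcT_4^{\sram}$. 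For the level structure: since the root lattice $\Lambda$ of type $E_8$ is unimodular, the pairing induces an isomorphism $\Lambda/2\Lambda \iso \Lambda^\vee/2\Lambda^\vee$, so the canonical surjection of Proposition~\ref{prop: del Pezzo 1} is an isomorphism $\Lambda/2\Lambda \iso \Pic^0(C)[2]$; transporting $B$ along it gives a marked $2$-torsion structure on $J_C$, compatible with the symplectic pairings by Proposition~\ref{prop: del Pezzo 2}. The point of care, (ii), is that the Bertini involution acts on $\Lambda$ by $-1$, hence trivially on $\Lambda/2\Lambda$, so $B$ and its Bertini translate induce the same marked $2$-torsion; thus markings of $S$ modulo the Bertini involution correspond to marked $2$-torsion of $J_C$, and the $W_{E_8}$-action on the former descends to the $W_+$-action on the latter.

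To see that the assignment is bijective I would reverse each step: from $(C, Q, \mu) \in \mcT_4^{\sram}(2)$ recover $S$ (Theorem~\ref{thm: uniquely trigonal genus 4}(e)), then $K = \pi_S^{-1}(\pi_C(Q))$, then the marking of $S$ modulo Bertini from $\mu$ via Proposition~\ref{prop: del Pezzo 1}; well-definedness of these inverse assignments is exactly claims (i) and (ii). Equivariance under $W_+$ is then formal, given the naturality of the constructions and the pairing compatibility of Proposition~\ref{prop: del Pezzo 2}. Composing $\mcT_4^{\sram}(2) \iso \wtilde \mcDP_1(\node)$ with Proposition~\ref{prop: torusE8} gives the corollary.

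I expect the main obstacle to be the geometric input (i): refining the classical statement that the singular anti-canonical sections of $S$ lie exactly over the ramification values of $\pi_C$ (cf. \cite[Corollary 2.2]{kulkarni2016explicit}) to a statement about the type of singularity. The argument I anticipate is local near a singular fibre: since the Bertini involution fixes the tangent direction of $C$ but interchanges the two branches of a nodal section $K$ at its node, $C$ meets each branch transversally, so $C$ meets $K$ with multiplicity $2$ at the node, forcing $\pi_C^{-1}(p) = 2Q + R$ with $Q$ the node --- a simple ramification point; a cuspidal section gives instead a single point of multiplicity $3$, i.e.\ a totally ramified point, which is the $\mathfrak{e}_8$ case and the reason $\wtilde\mcDP_1$ must be cut down to $\wtilde\mcDP_1(\node)$ here. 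The remaining care is bookkeeping, in (ii): matching the definition of ``marked $2$-torsion'' in $\mcT_4^{\sram}(2)$ with that of a del Pezzo marking modulo the Bertini involution, so that the two torsor structures --- hence the two $W_+$-actions --- are identified.
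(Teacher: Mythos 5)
Your proposal takes the same route as the paper: compose Proposition~\ref{prop: torusE8} with a $W_+$-equivariant branch-curve correspondence $\wtilde \mcDP_1(\node) \iso \mcT_4^{\sram}(2)$ coming from Theorem~\ref{thm: uniquely trigonal genus 4}(e) and Propositions~\ref{prop: del Pezzo 1} and~\ref{prop: del Pezzo 2}. The paper's proof is terser — it disposes of your claim (i) by citing \cite[Corollary 2.2]{kulkarni2016explicit} rather than giving the local argument with the Bertini involution swapping the branches at the node, and it handles (ii) and the $\gen{\pm 1}$ bookkeeping with the same brevity as you do.
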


	\begin{proof}
		Proposition \ref{prop: torusE8} supplies a $W_+$-equivariant isomorphism \break $T^\rss \iso \wtilde \mcDP_1(\node)$. However, there is a $W_+$-equivariant isomorphism from $ \wtilde \mcDP_1(\node)$ to $\mathcal{T}_4^{\ram}(2)$. Explicitly, if $(S,s)$ is a marked degree $1$ del Pezzo surface, we obtain a uniquely trigonal genus $4$ curve with marked $2$-level structure by taking the branch curve. By ~\cite[Corollary 2.2]{kulkarni2016explicit}, the section $s$ corresponds to a unique ramification point of index $2$. 
	\end{proof}
	
	If $K$ is an abstract cuspidal genus $0$ curve, then we may choose an isomorphism $\mbg_a \iso \Pic^0(K)$. This choice of isomorphism is unique up $\Aut(\mbg_a) = \mbg_m$, and the $-1$ element of $\mbg_m$ acts on $\mbg_a$ by inversion. Thus, if $\Lambda$ is the character lattice of a torus $T$, then $\Aut(\mbg_a)$ acts on $\mathfrak{t} := \Hom(\Lambda, \mbg_a)$ and we have $(\mathfrak{t} \bs \{0\}) \sslash\Aut(\mbg_a) = \mbp \mathfrak{t}$.
	
	Let $\wtilde \mcDP_1(\cusp)$ denote the moduli space of marked degree 1 del Pezzo surfaces together with a singular cuspidal anti-canonical section. By replacing the singular nodal cubic in the proof of Proposition \ref{prop: torusE8} with a cuspidal plane cubic we can prove using an identical argument:
	
	\begin{proposition} \label{prop: addtorusE8}
		If $k=k^\sep$, there is a $W_+$-equivariant isomorphism
		\[
		\wtilde \mcDP_1(\cusp) \iso  \mbp \mathfrak{t}^{\rss}
		\]
		with $\mathfrak{t} := \Hom(\Lambda, \Ga)$ and $\Lambda$ a split root lattice of type $E_8$.
	\end{proposition}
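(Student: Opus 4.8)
The plan is to mimic the proof of Proposition~\ref{prop: torusE8} essentially verbatim, replacing the nodal rational curve by a cuspidal one and the multiplicative group $\Gm$ by the additive group $\Ga$. First I would fix an abstract cuspidal genus $0$ curve $K$, recall that $\Pic^0(K) \iso \Ga$ canonically up to the $\Aut(\Ga) = \Gm$ ambiguity recorded just before the statement, choose a smooth point $P_1 \in K_\reg$, and use the translation isomorphism $\Pic^0(K) \iso \Pic^1(K) \iso K_\reg$. Then, given $\phi \in \mathfrak{t}^\rss$ — a homomorphism $\Lambda \ra \Ga$ vanishing on no root — together with a choice of identification $\Pic^0(K) \iso \Ga$, I would define $P_2, \ldots, P_8 \in K_\reg$ inductively by requiring $P_{i+1}$ to be linearly equivalent to $P_i + \phi(e_{i+1} - e_i)$, so that $(P_i - P_j) = \phi(e_i - e_j)$ for all $i,j$, and I would embed $K$ into $\mbp^2$ by the degree $3$ linear system of
\[
D := \phi(l - e_1 - e_2 - e_3) + P_1 + P_2 + P_3 .
\]

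Next I would rerun the four intersection-theoretic computations from the proof of Proposition~\ref{prop: torusE8}, now performed inside $\Pic^0(K) \iso \Ga$ instead of $\Pic^0(K) \iso \Gm$: two of the points coincide, three are collinear, six lie on a conic, or a cubic passes through all eight with a singularity at one of them exactly when $\phi$ annihilates a vector of the form $e_i - e_j$, $\, l - e_{i_1} - e_{i_2} - e_{i_3}$, $\, 2l - e_{i_1} - \cdots - e_{i_6}$, or $\, 3l - e_{i_1} - \cdots - e_{i_7} - 2e_{i_8}$ respectively. As these vectors exhaust the roots of $\Lambda$, the eight points are in general position precisely when $\phi \in \mathfrak{t}^\rss$. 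Blowing up $\mbp^2$ at $P_1, \ldots, P_8$ then produces a marked degree $1$ del Pezzo surface $S$ with exceptional classes $e_1, \ldots, e_8$ and with the strict transform of $K$ as a cuspidal anti-canonical section, giving a point of $\wtilde\mcDP_1(\cusp)$; conversely, contracting the eight exceptional curves of such a surface defined by the marking returns $\mbp^2$ with eight points on a cuspidal plane cubic, and reading off their classes in $\Pic^0 \iso \Ga$ recovers $\phi$. So the assignment is bijective once a single identification $\Pic^0(K) \iso \Ga$ has been fixed.

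Finally I would descend to the quotient and check equivariance. Rescaling $\phi$ by $\lambda \in \Gm$ amounts to post-composing the chosen identification $\Pic^0(K) \iso \Ga$ with multiplication by $\lambda$, and this is induced by an automorphism of $\mbp^2$ preserving $K$ and $P_1$; hence the marked del Pezzo surface with its cuspidal section depends only on the class of $\phi$ in $(\mathfrak{t} \bs \{0\}) \sslash \Aut(\Ga) = \mbp\mathfrak{t}$, which yields the bijection $\mbp\mathfrak{t}^\rss \iso \wtilde\mcDP_1(\cusp)$. Since $W_{E_8}$ acts on $\mathfrak{t} = \Hom(\Lambda, \Ga)$ through its action on $\Lambda$ and the construction is manifestly $W_{E_8}$-equivariant, and since the central element of $W_{E_8}$ acts on $\mathfrak{t}$ by the scalar $-1 \in \Gm = \Aut(\Ga)$ and hence trivially on $\mbp\mathfrak{t}$, the bijection descends to a $W_+$-equivariant isomorphism.

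The step I expect to require the most care is this last descent: one must check that the \emph{only} ambiguity in the construction is the $\Aut(\Ga) = \Gm$ rescaling of $\phi$ — equivalently, that the automorphisms of $\mbp^2$ fixing a cuspidal plane cubic $K$ and a chosen smooth point of it induce exactly the rescalings of $\Pic^0(K) \iso \Ga$, no more and no fewer. Because $\Aut(\Ga) = \Gm$ is one-dimensional (in contrast to $\Aut(\Gm) = \langle \pm 1\rangle$), this is precisely what replaces the \emph{unique up to inversion} remark of Proposition~\ref{prop: torusE8} and what forces the target to be the weighted quotient $\mbp\mathfrak{t}$ rather than $\mathfrak{t}/\langle\pm1\rangle$; everything else is formally identical to the nodal argument, as the authors assert.
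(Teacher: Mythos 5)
Your proposal is correct and is essentially the paper's argument: the paper proves Proposition~\ref{prop: addtorusE8} by observing that one replaces the nodal plane cubic in the proof of Proposition~\ref{prop: torusE8} with a cuspidal one and runs the identical argument, which is precisely what you do, including the replacement of the $\langle\pm 1\rangle$-ambiguity by the $\Aut(\Ga)=\Gm$ ambiguity (discussed in the paper just before the statement) that forces the target to be $\mbp\mathfrak{t}^\rss$. One small imprecision in your descent step: the configurations attached to $\phi$ and $\lambda\phi$ are matched by a $\PGL_3$-transformation intertwining two \emph{different} anticanonical embeddings of the abstract cuspidal curve $K$, not by an automorphism of $\mbp^2$ preserving a fixed plane cubic and the (generically non-flex) point $P_1$ --- the automorphism group of a fixed cuspidal plane cubic fixes only its flex --- but this does not affect the validity of the conclusion.
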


	In the proposition above, one can think of $\mathfrak{t}$ as the Lie algebra of a maximal subtorus of a split adjoint simple group of type $E_8$. As before we also obtain:

	\begin{corollary} \label{cor: e8lie 3.4}
		There is a $W_+$-equivariant isomorphism from $\mbp \mathfrak{t}^\rss$ to $\mathcal{T}_4^{\tram}(2)$.
	\end{corollary}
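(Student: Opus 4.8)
The plan is to mirror the proof of Corollary \ref{cor: e8 3.4} in the cuspidal setting. Proposition \ref{prop: addtorusE8} already furnishes a $W_+$-equivariant isomorphism $\mbp\mathfrak{t}^\rss \iso \wtilde\mcDP_1(\cusp)$, so the task reduces to producing a $W_+$-equivariant isomorphism $\wtilde\mcDP_1(\cusp) \iso \mcT_4^\tram(2)$ and composing.

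First I would send a marked degree $1$ del Pezzo surface $(S,\{e_1,\dots,e_8\})$ equipped with a distinguished cuspidal anti-canonical section to its branch curve $C$ under the Bertini involution. By Theorem \ref{thm: uniquely trigonal genus 4} this $C$ is a uniquely trigonal genus $4$ curve, and Propositions \ref{prop: del Pezzo 1} and \ref{prop: del Pezzo 2} convert the ($\pm 1$-reduced) marking of $(\Pic S_{k^\sep})^\perp$ into a symplectic marking of $\Pic^0(C)[2]$. This is exactly the data that occurred in the nodal case of Corollary \ref{cor: e8 3.4}, so the verification that the assignment is a well-defined bijection of moduli functors, and that it is $W_+$-equivariant, is the same as in Corollary \ref{cor: e8 3.4}.

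The only genuinely new point is to identify what a cuspidal section records on $C$. A singular anti-canonical section $\pi_S^{-1}(p)$ sits over a point $p\in\mbp^1$ lying under a ramification point of $\pi_C$, and by \cite[Corollary 2.2]{kulkarni2016explicit} the singularity type of the section determines the ramification index: a nodal section corresponds to a simple ramification point of index $2$, and a cuspidal section corresponds to a totally ramified point of index $3$. Thus a marked cuspidal section is precisely the datum of a totally ramified point of the trigonal morphism, so the branch curve construction lands in $\mcT_4^\tram(2)$ and, run backwards as in Corollary \ref{cor: e8 3.4}, is an isomorphism onto it; composing with Proposition \ref{prop: addtorusE8} gives $\mbp\mathfrak{t}^\rss\iso\mcT_4^\tram(2)$. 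The step I expect to need the most care is this node-versus-cusp dichotomy, but it is precisely the content of \cite[Corollary 2.2]{kulkarni2016explicit} together with the discussion preceding Theorem \ref{thm: uniquely trigonal genus 4}; everything else transcribes directly from the nodal argument.
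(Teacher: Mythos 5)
Your argument is exactly what the paper intends; the paper omits the proof with the phrase ``As before we also obtain'' because it is the cuspidal translation of Corollary~\ref{cor: e8 3.4}, and you reproduce that translation correctly, including the crucial identification (via \cite[Corollary 2.2]{kulkarni2016explicit}) that a cuspidal anti-canonical section corresponds to a totally ramified point of the trigonal morphism. Nothing to add.
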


	\begin{remark} \label{rem: assignment-map}
		Let $\Lambda$ be the simply laced lattice of Dynkin type $E_8$. Let $C/k$ be a uniquely trigonal genus $4$ curve, let $\pi\: C \ra \mbp^1$ be the unique up to $\PGL_2$ trigonal morphism, and let $P$ be a ramified point of $\pi$. We give the details of the assignment of $(C,P) \in \mathcal{T}_4^{\sram}(k)$ to a rational point on the torus $T = \Hom(\Lambda, \mbg_m)(k)$ which is well-defined up to $W_{E_8} \iso \Aut(\Lambda)$.
		
		\bigskip
		
		The canonical model of $C$ lies on a quadric cone in $\mbp^3$. We may identify the quadric cone with $\mbp(1:1:2)$ and up to automorphisms of $\mbp^3$ we see that $C$ is given by a model of the form
		\[
			C\: 0 = f_0 w^3 + f_2(s,t) w^2 + f_4(s,t) w + f_6(s,t).
		\]
		Since $C$ is not hyperelliptic, we have that $f_0 \neq 0$. The uniquely trigonal morphism $\pi$ is induced by the projection $\pi\: (s,t,w) \mapsto (s,t)$. We let $S/k$ be the degree $1$ del Pezzo surface defined by the sextic equation in $\mbp(1:1:2:3)$
		\[
			S\: z^2 = f_0 w^3 + f_2(s,t) w^2 + f_4(s,t) w + f_6(s,t).
		\]
		Using the marked point $P = (s_0,t_0,w_0)$ on $C$ we define an anti-canonical section on $S$. The map 
			\mapdef{\pi}{X}{\mbp^1}{(s,t,w,z)}{(s,t)}
		is a rational map which is regular outside the base-point of $S$. The Weil divisor $D = \overline{ \pi^{-1}(s_0,t_0) }$ is an anti-canonical divisor of $S$. Moreover, by the assumption that $P$ is a simply ramified point of $\pi$, we see that $D$ as a scheme is isomorphic to a nodal genus $0$ curve. The restriction $\Pic(S_{k^\sep}) \ra \Pic(D)$
		induces a homomorphism of Galois modules
		\[
			\Pic(S_{k^\sep})^\perp \ra \Pic^0(D). 
		\]
		Up to $\Aut(\Lambda)$ we may choose an identification $\Lambda \iso \Pic(S_{k^\sep})^\perp$. We may also choose an identification $\Pic^0(D) \iso \mbg_m$, and the choice of this identification is unique up to inversion. We obtain a Galois action on $\Lambda$ by inheriting the Galois action on $\Pic(S_{k^\sep})^\perp$. Thus, we obtain a point $\kappa_C \in \Hom(\Lambda, \mbg_m)(k)$ and this point is unambiguously defined up to $\Aut(\Lambda)$. We produce a similar assignment $\mathcal{T}_4^{\tram}(k) \dashrightarrow \mbp \Hom(\Lambda, \mbg_a)(k)$ in an analogous fashion.
		
	\end{remark}
	
	Via Corollary \ref{cor: e8 3.4}, Corollary \ref{cor: e8lie 3.4}, and Remark \ref{rem: assignment-map} we obtain:
	
	\begin{reptheorem}{conj: e8 3.4}
		\
		\begin{enumerate}
			\item[$E_8\:$]
			If $k=k^\sep$, then the assignment of Remark \ref{rem: assignment-map} induces a bijection
			\[
			\mcT^\sram(k) \ra (T^\rss \sslash W_T)(k)
			\]
			with $\Lambda$ the root lattice of type $E_8$ and $T = \Hom(\Lambda, \Gm) $ the split torus of rank $8$.
			
			\item[$\mathfrak{e}_8\:$]
			If $k=k^\sep$, then the assignment of Remark \ref{rem: assignment-map} induces a bijection
			\[
			\mcT^\tram(k) \ra (\mbp \mathfrak{t}^\rss \sslash W_T)(k)
			\]
			with $\mathfrak{t}$ the Lie algebra of the torus $T$.
		\end{enumerate}
	\end{reptheorem}

	\begin{remark}
		An additional advantage of using the space $\mbp \mathfrak{t}$ instead of $\mathfrak{t}$ is pointed out in \cite[Section 1.16]{looijenga1993cohomology}. Namely, that there is a natural way to fit together the isomorphisms from Proposition \ref{prop: torusE8} and Proposition \ref{prop: addtorusE8}.
	\end{remark}

\section{Representation data and $G^\circ$-conjugacy classes} \label{sec: Reeder calculation}

	In this section, we perform a minor but important technical calculation using the method of ~\cite{reeder2010torsion} to compute a certain component group. The main results from this section are Corollary \ref{cor: component counting}, Lemma \ref{lem: representation dimension}, and Lemma \ref{lem: h^theta is D8}. 

	Let $H$ be a split adjoint simple group over $k$ of type $E_8$. For an involution $\theta$ we let $G := (H^\theta)^\circ$. By Proposition \ref{prop: thorne involution}, if $\theta_0, \theta$ are involutions of $H$ of Thorne type, there is a unique $\phi \in \Aut(H)$ up to $H^\theta(k)$-conjugacy such that $\phi \theta_0 \phi^{-1} = \theta$. However, we will require a way to determine $\phi$ uniquely up to $G(k)$-conjugacy. \cite{reeder2010torsion} shows that discrepancy is explained by a finite group of connected components. We calculate this component group in Corollary \ref{cor: component counting} and show that it is trivial. Nevertheless, it is still useful to supply the extra data of a representation, as in ~\cite{thorne2016arithmetic}, to resolve other technicalities. 

\subsection{Preliminaries for root systems}

	Let $\mathfrak{h}$ be the Lie algebra of $H$ and let $\mathfrak{t}$ be the Lie algebra of a maximal split subtorus $T$. In other words, $\mathfrak{t}$ is a Cartan subalgebra of $\mathfrak{h}$. Let $\mathfrak{t}^\vee := \Hom_k(\mathfrak{t},k)$ denote the linear dual to $\mathfrak{t}$. For any $\alpha \in \mathfrak{t}^\vee$, we let
		\[
			\mathfrak{h}^\alpha := \{ x \in \mathfrak{h} : (\ad_\mathfrak{h} t) x = \alpha(t) x \textbf{ for all } t \in \mathfrak{t}  \}.
		\]
	There are only finitely many $\alpha$ for which $\mathfrak{h}^\alpha \neq 0$; these $\alpha$ are called the \emph{roots} of $(\mathfrak{h}, \mathfrak{t})$ \cite[Chapter 8, Section 2]{bourbaki1975lie}.
	
	Let $\Phi$ denote the set of roots of $(\mathfrak{h}, \mathfrak{t})$. We see that the $\mbz$-linear combinations of elements of $\Phi$, which we denote by $\mbz \Phi$, form a lattice called the \emph{root lattice} of $(\mathfrak{h},\mathfrak{t})$. We see that $\mbz \Phi$ is also (by definition) the character lattice of $\mathfrak{t}$. The character lattice of $T$ can be canonically identified with the character lattice of $\mathfrak{t}$, so we write $\Lambda = \mbz \Phi$.
	
\subsection{Computing the connected components of $H^\theta$}

	We compute the Zariski connected components of $H^\theta$ using the method of \cite{reeder2010torsion}, on which we give an exposition below. We remark that it suffices to determine the connected components in the analytic topology as these are exactly the connected components in the Zariski topology.

		Let $\Lambda$ be the character lattice of $T$ and let $Y = \Hom(\Lambda, \mbz)$ be the co-character lattice of $T$.
	Let $V := Y \otimes_\mbz \mbr$. We may regard $V$ as the Lie algebra of the maximal (analytically) compact subtorus $S \ssq T$ via the exponential map
		\[
			\exp\: V \ra S,
		\]
	which is a surjective group homomorphism. The kernel of $\exp$ is exactly $Y$, so in particular we have an isomorphism
		\[
			\exp\: V/Y \overset{\sim}{\longrightarrow} S.
		\]
	The action of the Weyl group $W_T$ on $\Lambda$ gives rise to a dual action on $Y$ and hence on $V$. This action is faithfully represented by the action of the reflection group of the coroot lattice $\mbz \Phi^\vee \ssq Y$ extended to $V$. Note that in general $\mbz\Phi^\vee \neq Y$, though we do have equality when $\Lambda$ is a root lattice of type $E_8$. This fact is not true for $E_7$, and we shall discuss exactly how this difference manifests in terms of invariant theory in Remark \ref{rem: E7 versus E8}.

	We now recall some facts stated in ~\cite[Section 2.2]{reeder2010torsion}. Any element of $H$ which acts on $\Lie H$ diagonalizably (over $\mbc$) is $H$-conjugate to an element of $T$. Additionally, we have that two elements of $T$ are $H$-conjugate if and only if they are conjugate by $W_T$. Any torsion element of $H$ acts diagonalizably over $\mbc$ and is conjugate to an element of $S$. The elements $s = \exp(x)$ and $s' = \exp(x')$ of $S$ are conjugate if and only if $x,x'$ are in the same orbit under action of the \emph{extended affine Weyl group}
		\[
			\wtilde W_T := W_T \ltimes Y
		\]
	where $Y$ acts on $V$ by translations. 
	
	Let $\Delta := \{\alpha_1, \ldots, \alpha_\ell\}$ be a basis of simple roots of $\Lambda$ and let $\{\check \omega_1, \ldots, \check \omega_\ell \}$ be the basis of the co-weight lattice dual to $\Delta$. Which is to say, under the natural pairing we have that $\gen{\alpha_i, \check \omega_j } = \delta_{ij}$. Let $\alpha_0 = \sum_{i=1}^\ell a_i \alpha_i$ be the highest root of $\Lambda$ with respect to $\Delta$ (here the $a_i$ are positive integers). Let $v_i := a_i^{-1} \check \omega_i$ for $1 \leq i \leq \ell$ and let $v_0 := 0$.
	
	The set
		\[
			C = \set{\sum_{i=0}^\ell x_i v_i \in V : 0 < x_i < 1  \text{ and } \sum_{i=0}^\ell x_i = 1 }
		\]
	is the \emph{alcove} determined by $\Delta$ (for a proper definition of alcove see ~\cite[Section 2.2]{reeder2010torsion}). For each $g \in \wtilde W_T$ we may write $g \cdot C := \{x \in V : g^{-1}(x) \in C \}$. Note that the closure of $C$ is the simplex of dimension $\ell$ given by the convex hull of $\{ v_0, \ldots, v_\ell\}$. It is a fact that $g \cdot C \cap C \neq \emptyset$ if and only if $g \cdot C = C$. An important group related to $H$ is the \emph{alcove stabilizer}, which is defined as
		\[
			\Omega := \{g \in \wtilde W_T : g \cdot C = C\}.
		\]
	For $x \in V$, we further define $\Omega_x$ to be the subgroup of $\Omega$ stabilizing $x$. It is computationally useful to note that $\Omega_b = \Omega$ if $b$ is the barycentre of $C$. One has that $\Omega \iso Y / \mbz \Phi^\vee$ and that $\Omega$ is isomorphic to the fundamental group of $H$ (see ~\cite[Section 2.2]{reeder2010torsion}). The following useful result appears as ~\cite[Proposition 2.1]{reeder2010torsion}.
	
	\begin{proposition}[Reeder] \label{prop: reeder2.1}
		Let $\bar C$ denote the closure of $C$ in $V$. Denote by $C_H(s)$ the centralizer of $s$ in $H$. For $s = \exp(x)$ with $x \in \bar C$, the component group of $C_H(s)$ is isomorphic to $\Omega_x$.
	\end{proposition}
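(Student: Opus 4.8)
The plan is to compute $\pi_0(C_H(s))$ by cutting $C_H(s)$ down to its intersection with the normaliser of a maximal torus — thereby converting the problem into one about stabiliser subgroups of the extended affine Weyl group $\wtilde W_T = W_T \ltimes Y$ — and then to identify the resulting quotient with $\Omega_x$ using that the closed alcove $\bar C$ is a strict fundamental domain for the affine Weyl group $W_{\aff} := W_T \ltimes \mbz\Phi^\vee$. Throughout, $H$ denotes the complex (equivalently $k^\sep$-) points of the split adjoint simple group and $s = \exp(x) \in S \ssq T$.

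First I would recall the structure of $C_H(s)$. Since $s$ lies in $T$ it is semisimple, so $C_H(s)$ is reductive, contains $T$ as a maximal torus, and $C_H(s)^\circ$ is generated by $T$ together with the root subgroups $U_\alpha$ for which $\alpha(s) = 1$; by the description of $\exp$ this last condition is precisely $\gen{\alpha, x} \in \mbz$. Write $\Psi_x := \{\alpha \in \Phi : \gen{\alpha, x} \in \mbz\}$ for the resulting root subsystem and $W_x := \gen{s_\alpha : \alpha \in \Psi_x} \ssq W_T$ for the Weyl group of $C_H(s)^\circ$. As every element of $C_H(s)$ normalises some maximal torus of $C_H(s)^\circ$ and all such tori are $C_H(s)^\circ$-conjugate, one has $C_H(s) = C_H(s)^\circ \cdot \big(N_H(T) \cap C_H(s)\big)$, hence $\pi_0(C_H(s)) \iso (N_H(T) \cap C_H(s))/(N_H(T) \cap C_H(s)^\circ)$. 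Feeding in the exact sequence $1 \to T \to N_H(T) \to W_T \to 1$, noting that $T \ssq C_H(s)$ and that a representative $n_w$ of $w \in W_T$ (times an element of $T$) centralises $s = \exp(x)$ exactly when $\exp(wx) = \exp(x)$, i.e. $wx - x \in Y$, gives a canonical isomorphism
\[
\pi_0(C_H(s)) \;\iso\; \{w \in W_T : wx - x \in Y\}\,\big/\,W_x .
\]

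Next I would realise both groups inside $\wtilde W_T$. The projection $\wtilde W_T \to W_T$ forgetting translations restricts to an isomorphism of $\wtilde W_x := \Stab_{\wtilde W_T}(x)$ onto $\{w \in W_T : wx - x \in Y\}$, the lift of such a $w$ being $w$ post-composed with translation by $x - wx$. Under this identification each $s_\alpha$ with $\alpha \in \Psi_x$ corresponds to the affine reflection in the hyperplane $\{\gen{\alpha, \cdot} = \gen{\alpha, x}\}$, which passes through $x$; and by the standard fact that a point stabiliser in an affine reflection group is generated by the reflections in the hyperplanes through that point, $W_x$ is carried exactly onto $W_{\aff, x} := \Stab_{W_{\aff}}(x) = \wtilde W_x \cap W_{\aff}$. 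Hence $\pi_0(C_H(s)) \iso \wtilde W_x / W_{\aff, x}$. Finally, $\Omega = \Stab_{\wtilde W_T}(C)$ is a complement to $W_{\aff}$ in $\wtilde W_T$ (mapping isomorphically onto $Y/\mbz\Phi^\vee$), so there is a homomorphism $p\colon \wtilde W_T \to \Omega$ with kernel $W_{\aff}$. Its restriction to $\wtilde W_x$ has kernel $W_{\aff, x}$ and is visibly onto $\Omega_x$; and its image lies in $\Omega_x$ because if $g \in \wtilde W_x$ is written $g = u\omega$ with $u \in W_{\aff}$, $\omega \in \Omega$, then $\omega(x) \in \bar C$ (as $\omega$ preserves $\bar C$ and $x \in \bar C$) while $\omega(x) = u^{-1}(x)$ lies in the $W_{\aff}$-orbit of $x$, so $\omega(x) = x$ since $\bar C$ is a strict fundamental domain for $W_{\aff}$. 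Thus $p$ induces $\wtilde W_x / W_{\aff, x} \iso \Omega_x$, and composing with the previous step proves the proposition.

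I expect the main obstacle to be the first step: identifying $\pi_0(C_H(s))$ with a Weyl-theoretic quotient rests on the structure theory of centralisers of semisimple elements in a reductive group — that $C_H(s)^\circ$ is reductive of full rank with root system $\Psi_x$, and that every connected component of $C_H(s)$ meets $N_H(T)$ — and one must be careful that the intrinsic Weyl group $W_x$ of $C_H(s)^\circ$ is matched, via the reflection-generation of point stabilisers, with the geometric stabiliser $W_{\aff, x}$. Once that matching is in place, the final combinatorial identification with $\Omega_x$ is comparatively soft, being essentially the strict-fundamental-domain property of $\bar C$.
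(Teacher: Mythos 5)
The paper does not reprove this statement: it simply quotes it from Reeder \cite[Proposition 2.1]{reeder2010torsion}, so there is no ``paper proof'' to compare against line by line. That said, your argument is a correct and complete reconstruction, and it follows what is essentially the standard (and Reeder's) route. All the key steps check out: (i) the structure theory of centralizers of semisimple elements gives $C_H(s)^\circ$ reductive of full rank with root system $\Psi_x = \{\alpha : \langle\alpha, x\rangle \in \mathbb{Z}\}$ and Weyl group $W_x$, and since any $g \in C_H(s)$ conjugates $T$ to another maximal torus of $C_H(s)^\circ$, every component meets $N_H(T)$, yielding $\pi_0(C_H(s)) \cong \{w \in W_T : wx - x \in Y\}/W_x$; (ii) the map $w \mapsto (w,\, x - wx)$ identifies that numerator with $\Stab_{\wtilde W_T}(x)$, carries $s_\alpha$ (for $\alpha \in \Psi_x$) to the affine reflection through the hyperplane $\{\langle\alpha,\cdot\rangle = \langle\alpha,x\rangle\}$ containing $x$, and the Bourbaki/Steinberg fact that point stabilizers in reflection groups are generated by the reflections through the point pins $W_x$ onto $\Stab_{W_{\aff}}(x)$; (iii) the retraction $p: \wtilde W_T \to \Omega$ along $W_{\aff}$ restricted to $\Stab_{\wtilde W_T}(x)$ has kernel $\Stab_{W_{\aff}}(x)$ and image $\Omega_x$, the containment of the image in $\Omega_x$ using precisely that $\bar C$ is a strict fundamental domain for $W_{\aff}$. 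Two minor nits worth being aware of, though neither is an error: the phrase ``a representative $n_w$ \dots (times an element of $T$)'' is superfluous since $T$ centralizes $s$ and hence whether $n_w$ centralizes $s$ is independent of the lift; and it is worth saying explicitly, as the paper does, that one computes $\pi_0$ in the analytic topology and then transfers to the Zariski topology, since $\exp$ and the compact form $S$ are analytic constructions.
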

	
	We now prove the main result of this section. 

	\begin{corollary} \label{cor: component counting}
		Let $H$ be a split adjoint simple group of type $E_8$ and $\theta\: H \ra H$ an involution. Then $H^\theta$ has a single connected component. In particular, $H^\theta = (H^\theta)^\circ$.
	\end{corollary}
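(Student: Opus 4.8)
The plan is to realize $H^\theta$ as the centralizer in $H$ of a torsion element and then read off its component group from the $E_8$ root datum via Proposition \ref{prop: reeder2.1}. First I would reduce to the case $k=\mbc$: the identity component of an algebraic group is geometrically connected and commutes with base field extension, so $H^\theta=(H^\theta)^\circ$ holds over $k$ as soon as it holds over $\mbc$, and by the remark above it suffices to count the analytic connected components there.

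Next, since $H$ is the split adjoint simple group of type $E_8$ and the $E_8$ Dynkin diagram has no nontrivial automorphisms, every automorphism of $H$ is inner, so $\theta=\mathrm{Int}(g)$ for some $g\in H$. Because $E_8$ is also simply connected, $Z(H)$ is trivial, so $\theta^2=\mathrm{id}$ forces $g^2=1$; thus $g$ is a torsion element of $H$ and, by the definition of $H^\theta$, we have $H^\theta=C_H(g)$.

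Finally, being torsion, $g$ is $H$-conjugate to an element $\exp(x)$ with $x$ in the closed alcove $\bar C$, so $H^\theta=C_H(g)\iso C_H(\exp(x))$, whose component group is $\Omega_x$ by Proposition \ref{prop: reeder2.1}. Since $\Omega_x\ssq\Omega\iso Y/\mbz\Phi^\vee$ and, for a root lattice of type $E_8$, one has $Y=\mbz\Phi^\vee$ (equivalently, $H$ has trivial fundamental group because the $E_8$ root lattice equals its weight lattice), we get $\Omega=1$, hence $\Omega_x=1$. Therefore $C_H(g)=H^\theta$ is connected. The argument is mostly bookkeeping once Reeder's description is available; the only step needing genuine care is the second one --- verifying that $\theta$ is inner and that $H^\theta$ is literally the centralizer of an honest involution of $H$ --- since that is exactly what makes Proposition \ref{prop: reeder2.1} applicable.
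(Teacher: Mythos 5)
Your proof is correct and follows essentially the same route as the paper's: pass to $\mbc$, observe that the $E_8$ Dynkin diagram has no automorphisms so $\theta$ is inner, use triviality of $Z(H)$ to get a $2$-torsion element $s$ with $H^\theta=C_H(s)$, and then invoke Proposition \ref{prop: reeder2.1} together with $\Omega\iso Y/\mbz\Phi^\vee=1$ for $E_8$ to conclude the centralizer is connected. One small stylistic point: triviality of $Z(H)$ is most directly a consequence of $H$ being adjoint (which is a hypothesis), rather than of simple connectedness; for $E_8$ the two forms coincide so your reasoning is not wrong, but the paper's phrasing goes straight from the adjoint hypothesis.
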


	\begin{proof}
		Recall that $\Omega \iso Y/\mbz\Phi^\vee$ and that $Y = \mbz\Phi^\vee$ for root lattices of type $E_8$. Thus $\Omega$ is trivial, so by the preceding discussion we have that $H$ is simply connected. Since the Dynkin diagram of $E_8$ admits no automorphisms, we have that every automorphism of $H$ is inner (see \cite[Exercise 8.28]{fulton1991representation}, \cite[Proposition D.40]{fulton1991representation} ). Note that as $H$ has trivial centre, it is isomorphic to its group of inner automorphisms, so we may choose an element $s \in H(\mbc)$ such that $s = s^{-1}$ and $\theta(h) = s h s^{-1}$ for all $h \in H$. But $\Omega$ is trivial, so by Proposition \ref{prop: reeder2.1} the centralizer of any $2$-torsion element has a single connected component. Since $H^\theta = C_{H}(s)$ we are done. 
	\end{proof}

	\begin{remark} \label{rem: E7 versus E8}
		If $H$ is a split adjoint simple group of type $E_7$, then the alcove stabilizer has order $2$. Thorne uses exactly this computation to show $H^\theta$ has two connected components if $\theta$ is a split involution of trace $-\rank H$. It is mostly for this reason that his argument requires the extra data of a representation (see the bulleted remarks in Section 3A of ~\cite{thorne2016arithmetic}, as well as the proof of Theorem 3.5). 
	\end{remark}

	A consequence of Corollary \ref{cor: component counting} is that the data consisting of a point on a maximal torus and a split involution acting by inversion on this torus is sufficient to determine a unique $(H^\theta)^\circ$-orbit. In particular, we do not need to keep track of any representation data to identify the correct connected component of $H^\theta$.

\subsection{Two representations of dimension $16$}

	Here we state some well-known facts regarding two $16$-dimensional representations used in Section \ref{sec: Construction of orbits}. 
	
	\begin{lemma} \label{lem: representation dimension}
		Let $B$ be a genus $4$ curve with theta characteristic $\kappa$. Let $W_{3}$ be the theta divisor on $\Pic^{3} B$. Let $\Theta_\kappa := t_\kappa^* W_{3} \in \Div (\Pic^0 B)$ be the pullback of $W_{3}$ along translation by $\kappa$. Then
		\[
			h^0(\Pic^0 B, 2\Theta_\kappa) = 16.
		\]
	\end{lemma}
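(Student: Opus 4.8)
The plan is to compute $h^0(\Pic^0 B, 2\Theta_\kappa)$ via the theory of theta divisors on a principally polarized abelian variety, using that $\Theta_\kappa$ is (by construction) a symmetric principal polarization on the $g$-dimensional abelian variety $\Pic^0 B$. Since $B$ has genus $4$, we have $\dim \Pic^0 B = 4$, and $\Theta_\kappa$ is algebraically (indeed, as a divisor class up to translation) equivalent to the theta divisor of the principal polarization. The essential input is the classical fact that for a principal polarization $\Theta$ on an abelian variety $A$ of dimension $g$, and any positive integer $n$, one has $h^0(A, n\Theta) = n^g$. Applying this with $n = 2$ and $g = 4$ immediately gives $h^0 = 2^4 = 16$.

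First I would recall that $\Theta_\kappa := t_\kappa^* W_3$ is a translate of the theta divisor $W_3 \subset \Pic^3 B$, hence defines the principal polarization on $\Pic^0 B$; in particular $h^0(\Pic^0 B, \Theta_\kappa) = 1$ by Riemann--Roch for abelian varieties (the self-intersection $(\Theta_\kappa^g)/g!$ equals $1$ for a principal polarization, and $\Theta_\kappa$ is nondegenerate so the only nonzero cohomology is in degree $0$). Next I would invoke the multiplication formula: for an ample symmetric line bundle $L$ defining a principal polarization, $L^{\otimes n}$ has $h^0 = n^g$. The cleanest justification is again Riemann--Roch combined with the vanishing of higher cohomology for ample line bundles on abelian varieties (Mumford, \emph{Abelian Varieties}): $\chi(L^{\otimes n}) = (L^{\otimes n})^g/g! = n^g \cdot (L^g/g!) = n^g \cdot \chi(L) = n^g$, and since $L^{\otimes n}$ is ample, $h^i = 0$ for $i > 0$, so $h^0 = \chi = n^g$. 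With $g = 4$ and $n = 2$ this yields $h^0(\Pic^0 B, 2\Theta_\kappa) = 16$.

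There is essentially no obstacle here; the only point requiring a small remark is that the answer is independent of the choice of theta characteristic $\kappa$ (and of the genus-$4$ curve $B$), which is clear because translation by $\kappa$ is an automorphism of the abelian variety, so $t_\kappa^* W_3$ is merely a translate of $W_3$ and thus defines the same polarization; cohomology of line bundles is invariant under translation in the sense that $h^0(A, t_a^* L) = h^0(A, L)$ for any $a$. Hence the computation reduces to the standard $n^g$ formula, and I would simply cite Mumford's \emph{Abelian Varieties} (or Birkenhake--Lange) for the Riemann--Roch and vanishing statements rather than reproving them.
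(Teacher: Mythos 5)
Your proof is correct and is essentially the same as the paper's, which simply cites Birkenhake--Lange (Chapter IV, Section 8) for the standard fact that a principal polarization $\Theta$ on a $g$-dimensional abelian variety satisfies $h^0(n\Theta) = n^g$, giving $2^4 = 16$ here. You spell out the Riemann--Roch plus vanishing argument behind that citation, but the underlying route is identical.
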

	
	\begin{proof}
		See ~\cite[Chapter IV, Section 8]{birkenhake2004complex}.
	\end{proof}

	The statement of Lemma \ref{lem: representation dimension} does not require any property of $B$ beyond the genus.

	\begin{lemma} \label{lem: h^theta is D8}
		Let $H$ be a split adjoint simple group of type $E_8$ defined over $k$ and let $\theta\: H \ra H$ be an involution of Thorne type. Then
			\begin{enumerate}[(a)]
				\item 
					The Lie algebra $\Lie H^\theta$ is of Dynkin type $D_8$. In particular there is an isomorphism
						\[
							\Lie H^\theta \iso \mathfrak{so}_{16}.	
						\]
				\item
					The Standard representation of $\Lie H^\theta$ is a $16$-dimensional representation.
			\end{enumerate}
	\end{lemma}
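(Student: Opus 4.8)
The plan is to identify the involution $\theta$ of Thorne type explicitly using its trace invariant and then read off the fixed subalgebra from the theory of symmetric spaces / Kac-coordinates. By Proposition \ref{prop: thorne involution}, an involution of Thorne type satisfies $\trace(d\theta) = -\rank H = -8$. Since $\mathfrak{h}$ is of type $E_8$ with $\dim \mathfrak{h} = 248$, writing $\mathfrak{h} = \mathfrak{h}^{d\theta=1} \oplus \mathfrak{h}^{d\theta=-1}$ and letting $d = \dim \mathfrak{h}^{d\theta=1} = \dim \Lie H^\theta$, we get $\trace(d\theta) = d - (248-d) = 2d - 248 = -8$, hence $d = 120$. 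The only semisimple Lie algebra of dimension $120$ appearing as the fixed points of an involution of $\mathfrak{e}_8$ is $\mathfrak{so}_{16}$ (type $D_8$), since $\dim \mathfrak{so}_{16} = \binom{16}{2} = 120$; the other involution of $\mathfrak{e}_8$, up to conjugacy, has fixed subalgebra $\mathfrak{e}_7 \oplus \mathfrak{sl}_2$ of dimension $133 + 3 = 136 \neq 120$. This pins down part (a), modulo knowing the classification of involutions of $\mathfrak{e}_8$ (there are exactly two nontrivial conjugacy classes over $k^\sep$, corresponding to the symmetric spaces $E\,VIII$ and $E\,IX$). One should also note that $\Lie H^\theta$ is reductive (fixed points of a reductive group under an involution), and then a dimension/rank count forces it to be semisimple of type $D_8$: indeed $H^\theta$ has the same rank $8$ as $H$ (a maximal torus $T$ on which $\theta$ acts by inversion meets $H^\theta$ in $T[2]$, but the relevant statement is that one can choose $\theta$ fixing a maximal torus, and a rank count via the $-1$-eigenspace on $\mathfrak{t}$ shows the fixed subalgebra has rank $8$), so a rank-$8$, dimension-$120$ semisimple Lie algebra is of type $D_8$, hence isomorphic to $\mathfrak{so}_{16}$.

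For part (b), once $\Lie H^\theta \cong \mathfrak{so}_{16}$ is established, the standard (vector) representation of $\mathfrak{so}_{16}$ is by definition the $16$-dimensional representation coming from the defining embedding $\mathfrak{so}_{16} \hookrightarrow \mathfrak{gl}_{16}$, so this is immediate from the identification in part (a). The only subtlety is to make sure the isomorphism in (a) is available over $k$ (not merely over $k^\sep$): since $\theta$ is of Thorne type, condition (ii) of Proposition \ref{prop: thorne involution} says $(H^\theta)^\circ$ is split over $k$, and by Corollary \ref{cor: component counting} $H^\theta = (H^\theta)^\circ$, so $H^\theta$ is a split semisimple group of type $D_8$ over $k$, hence $H^\theta \cong \SO_{16}$ (the split form) or an isogenous split form; in any case its Lie algebra is $\mathfrak{so}_{16}$ over $k$ and carries the standard $16$-dimensional representation over $k$.

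The main obstacle I anticipate is justifying the classification step cleanly — namely, that an involution of Thorne type on $E_8$ has fixed subalgebra exactly $D_8$ and not $E_7 A_1$ — without invoking a heavy black box. The slick route is the dimension count above: it suffices to know that the two conjugacy classes of involutions of $\mathfrak{e}_8$ have fixed subalgebras $\mathfrak{so}_{16}$ and $\mathfrak{e}_7 \oplus \mathfrak{sl}_2$ (a standard fact from the Cartan classification of real forms / the Kac classification of finite-order automorphisms, see e.g. \cite{bourbaki1975lie} or standard references on symmetric spaces), and then the trace condition $\trace(d\theta) = -8$, equivalently $\dim \Lie H^\theta = 120$, selects $\mathfrak{so}_{16}$ since $\dim(\mathfrak{e}_7 \oplus \mathfrak{sl}_2) = 136$. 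Alternatively, one can argue directly: $\theta$ corresponds via Kac coordinates on the affine $E_8$ diagram to a labelling, the trace condition forces the labelling giving the maximal split form, and the corresponding finite diagram obtained by deleting the relevant node is $D_8$; this is the same computation Reeder-type methods give, and is consistent with the alcove computation already performed in this section. I would present the dimension-count argument as the main line and relegate the classification input to a citation.
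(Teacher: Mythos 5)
Your proposal is correct, but it takes a genuinely different route from the paper. The paper constructs an explicit involution of Thorne type: working with the Bourbaki realization of the $E_8$ root system in $\mathbb{R}^8$, it defines $d\theta$ to be $+1$ on $\mathfrak{t}$ and on root spaces $\mathfrak{h}^\alpha$ with $\alpha$ having integer coordinates, and $-1$ on root spaces with half-integer coordinates. It checks directly that this preserves the bracket, that the trace is $-8$, and then simply observes that the integer roots form a $D_8$ root system, so the fixed subalgebra is of type $D_8$ by inspection. This is self-contained and avoids any classification input. Your approach instead computes $\dim \Lie H^\theta = 120$ from the trace condition and then selects $\mathfrak{so}_{16}$ over $\mathfrak{e}_7 \oplus \mathfrak{sl}_2$ by dimension, using the Cartan classification of involutions of $\mathfrak{e}_8$ as a black box (or, in the variant you sketch, using reductivity, equal-rank-ness because $\theta$ is inner, and an enumeration of rank-$8$ semisimple algebras of dimension $120$). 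Both are valid; the paper's argument buys self-containedness and simultaneously produces a concrete model of $\theta$, while yours is shorter but defers to a classification theorem. One small remark on your fallback argument: the claim that ``a rank-$8$, dimension-$120$ semisimple Lie algebra is of type $D_8$'' is true but requires a short case check over products (e.g.\ $E_7 \times A_1$ has dimension $136$, $B_7 \times A_1$ has $108$, etc.); if you go that route it would be cleaner to say a word about why no product works, or just cite the classification as you suggest doing in your final sentence. Your handling of the rank computation (correctly noting that the torus inverted by $\theta$ is the wrong one to use, and that instead one should use the fact that $\theta$ is inner and hence fixes some maximal torus pointwise) and of rationality over $k$ via condition (ii) of Proposition~\ref{prop: thorne involution} is sound.
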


	\begin{proof}
		One need only determine the Dynkin type of $H^\theta$ as the remaining statements are standard facts. By functoriality of $\Lie$, we have that $\Lie H^\theta = \mathfrak{h}^{d\theta=1}$, where $\mathfrak{h}^{d\theta=1}$ denotes the $+1$-eigenspace of the involution $d\theta$ acting on $\mathfrak{h}$. Additionally, we have that $H \iso \Aut(\mathfrak{h})^\circ$ since $H$ is a split adjoint simple Lie group of type $E$.
		
		By Proposition \ref{prop: thorne involution} it suffices to demonstrate the claim for a particular involution of Thorne type. Let $\mathfrak{t}$ be a maximal split Cartan subalgebra of $\mathfrak{h}$ and write 
			\[
				\mathfrak{h} = \mathfrak{t} \oplus \bigoplus_{\alpha \in \Phi(\mathfrak{h}, \mathfrak{t})} \mathfrak{h}^\alpha
			\]
		with $\Phi(\mathfrak{h},\mathfrak{t})$ denoting the roots of $\mathfrak{h}$ with respect to $\mathfrak{t}$. We fix an identification of $\Phi(\mathfrak{h}, \mathfrak{t})$ with the $E_8$ root system in $\mbr^8$ (as in \cite[Planche 7]{bourbaki1968lie}) and let $\Phi_\mbz(\mathfrak{h}, \mathfrak{t}) \ssq \Phi(\mathfrak{h},\mathfrak{t})$ be the roots whose coordinates have integer entries. Define $d\theta$ by linearly extending
			\[
				d\theta(x) := 
					\begin{cases}
						\phantom{ -} \id &\textbf{ if } x \in \mathfrak{h}^\alpha,\  \alpha \in \Phi_\mbz(\mathfrak{h},\mathfrak{t}) \\
						\phantom{-}\id &\textbf{ if } x \in \mathfrak{t} \\
						-\id &\textbf{ if } x \in \mathfrak{h}^\alpha, \ \alpha \in \Phi(\mathfrak{h},\mathfrak{t}) \bs \Phi_\mbz(\mathfrak{h},\mathfrak{t})
					\end{cases}.
			\]
		We note that for $\alpha, \beta \in \Phi(\mathfrak{h},\mathfrak{t})$, we have that $\alpha + \beta$ has non-integer coordinates precisely when exactly one of $\alpha, \beta$ has non-integer coordinates. Using \cite[Excercise D.5]{fulton1991representation} it is a simple calculation to check that $d\theta$ preserves the Lie bracket. Thus $d\theta$ is a split involution of $\mathfrak{h}$, which corresponds to a split involution $\theta \in H$. Moreover, we see that $\trace(d\theta) = -\rank H$, so $\theta$ is of Thorne type. It is a standard fact that the Dynkin type of the root system $\Phi_\mbz(\mathfrak{h}, \mathfrak{t})$ is $D_8$.
	\end{proof}


\section{Construction of orbits for the $E_8$ case} \label{sec: Construction of orbits}

	In this section we prove the main results stated in Section \ref{subsec: main statements}.

	\subsection{Proof of Theorem \ref{conj: e8 3.5}}

		\begin{reptheorem}{conj: e8 3.5}
			\
			\begin{enumerate}
				\item[$E_8\:$] 
					Let $H$ be the split adjoint simple group of type $E_8$ over $k$, let $\theta$ be an involution of $H$ of Thorne type, and let $X := (H^{\theta(h)=h^{-1}})^\circ$ be the theta inverted subvariety. Let $G := (H^\theta)^\circ$. Let $X^\rss$ be the open subset of regular semi-simple elements. Then the assignment $(C,P) \mapsto \kappa_C$ of Theorem \ref{conj: e8 3.4}-$E_8$ determines a map
						\[
							\mcT^\sram(k) \ra G(k) \bs X^\rss(k).
						\]
					If $k=k^\sep$, then this map is a bijection.	
				\item[$\mathfrak{e}_8\:$] 
					Let $H$ be as above, let $X := \mathfrak{h}^{d\theta=-1}$ be the Lie algebra of $(H^{\theta(h)=h^{-1}})^\circ$, and let $X^\rss$ be the open subset of regular semi-simple elements. Then the assignment $(C,P) \mapsto \kappa_C$ of Theorem \ref{conj: e8 3.4}-$\mathfrak{e}_8$ determines a map
						\[
							\mcT^\tram(k) \ra G(k) \bs \mbp X^\rss(k).
						\]
					If $k=k^\sep$, then this map is a bijection.
			\end{enumerate}
		\end{reptheorem}
		
		\begin{remark} \label{rem: W_T ambiguity}
			We often make use of an assignment $(C,P) \dashrightarrow \Hom(\Lambda, \mbg_m)(k)$ which is only well-defined up to $W_{E_8}$. However, we are only interested in this assignment insofar as to construct a $G(k)$-orbit. Via the isomorphisms of Theorem \ref{thm: thorne1.10} and Theorem \ref{thm: thorne1.11} we resolve the ambiguity introduced by $W_{E_8}$ when assigning $(C,P)$ to a point $\kappa_C \in \Hom(\Lambda, \mbg_m)(k)$ (resp. $\Hom(\Lambda, \mbg_a)(k)$).
		\end{remark}
		
		We prove this theorem by closely following the proof of \cite[Theorem 3.5]{thorne2016arithmetic}.
		
		\begin{proof}
			Let $(C,P) \in \mcT^\sram(k)$, let $V = \Lambda/2\Lambda$, and let $T$ be the maximal subtorus of $H$ on which $\theta$ acts by inversion. Let $\gen{\cdot, \cdot}$ be the pairing defined on $\Lambda$ and let $\gen{\cdot, \cdot}_2\: V \ra \mbf_2$ denote the reduction of $\gen{\cdot, \cdot}$ modulo 2. Let $q\: V \ra \mbf_2$ be the quadratic form defined by $q(v) := \frac{\gen{v',v'}}{2} \pmod 2$, where $v'$ is a lift of $v \in V$ to $\Lambda$. We remark that $q$ is well-defined since the bilinear form on any lattice of type $E$ is even. Recall from Remark \ref{rem: assignment-map} that we may view $\kappa_C$ as an element of $\Hom(\Lambda,\mbg_m)(k)$ which is well-defined up to inversion and the Weyl group of $T$. Additionally, we have that $\Lambda$ is endowed with a Galois action from this assignment.
			
			\bigskip
			
			We now use the curve $C$ to produce the data needed for Lurie's construction. Let $\mathcal{L}$ be twice a theta divisor of $\Pic^0(C)$ and let $\wtilde H_\mathcal{L}$ be the associated Heisenberg group (see \cite[Section 1C]{thorne2016arithmetic}). We have the exact sequence 
				\[
					\xym{
						1 \ar[r] & \mbg_m \ar[r] & \heisenberg{\mathcal{L}} \ar[r] & \Pic^0(C)[2] \ar[r] & 1
					}
				\]
			and an injection $\Pic^0(C)[2] \inj \Lambda^\vee/2\Lambda^\vee$ induced from restriction of divisor classes. This gives us the diagram with exact rows via duality and pullback
				\[
					\xym{
						1 \ar[r] & \mbg_m \ar[r] & \heisenberg{\mathcal{L}} \ar[r] & \Pic^0(C)[2] \ar[r] & 1 \\
						1 \ar[r] & \mbg_m \ar[r] \ar@{=}[u] & \wtilde E \ar[r]^\psi \ar[u] & V \ar[r] \ar[u]_\gamma & 1.
					}
				\]
			Note that the commutator pairing on $\heisenberg{\mathcal{L}}$ descends to the Weil pairing on $\Pic^0(C)[2]$. Since by definition the kernel of $\gamma$ is the radical of $\gen{\cdot, \cdot}_2$, it follows that the commutator pairing on $\wtilde E$ descends to $\gen{\cdot, \cdot}_2$ on $V$.
			
			Define a character of $\wtilde E$ by $\chi_q(\wtilde e) = \wtilde e^2 (-1)^{q(\psi \wtilde e)}$. Note that $\chi_q$ is well-defined since $\wtilde e^2 \in \mbg_m$. Letting $\wtilde V := \ker \chi_q$ gives us the extension
				\[
					\xym{
						1 \ar[r] & \{\pm 1 \} \ar[r] & \wtilde V \ar[r] & V \ar[r] & 1.
					}				
				\]
			We define $W := H^0(\Pic^0(C), \mathcal{L})$. Note that $\heisenberg{\mathcal{L}}$ acts on $W$ by pullback of sections, so we define $\wtilde V$ to act on $W$ via the surjective homomorphism $\wtilde V \ra \heisenberg{\mathcal{L}}$. If $k = k^\sep$, then this is a $16$-dimensional irreducible representation of $\wtilde V(k^\sep)$ sending $-1$ to $-\id_W$. It is clear this action is Galois equivariant.

			\bigskip
	
			We have now constructed a quadruplet $(\Lambda, \wtilde V, W, \rho)$ satisfying the conditions of Data I in Section \ref{sec: Lurie}. Thus, by Theorem \ref{thm: Lurie construction}, we obtain a simple adjoint group $H_0$ of type $E_8$, an involution $\theta_0$ leaving the maximal torus $T_0 \ssq H_0$ stable, and a representation of $\mathfrak{g}_0 = \mathfrak{h}_0^{d\theta=1}$. We have that $\theta_0$ acts on $T_0$ by $t \mapsto t^{-1}$ and that $T_0$ is canonically identified with $\Hom(\Lambda, \mbg_m)$. From now on we view $\kappa_C$ as a point of $T_0$.
			
			Observe that we have constructed a 16-dimensional representation $\rho\: \mathfrak{g}_0 \ra \mathfrak{gl}(W)$ of a Lie algebra of Dynkin type $D_8$ defined over $k$, so $\mathfrak{g}_0$ is split. Since $\mathfrak{g}_0$ is a split Lie subalgebra of the same rank as $H_0$ by Lemma \ref{lem: h^theta is D8}, we have that $H_0$ is split. 
			
			By Proposition \ref{prop: thorne involution} there is an isomorphism $\varphi\: H \ra H_0$, unique up to $H^\theta(k)$-conjugacy, satisfying $\theta_0 \varphi = \varphi \theta$. By Corollary \ref{cor: component counting} this isomorphism is unique up to $G(k)$-conjugacy as well. 
			
			It follows that the orbit $G(k) \cdot \varphi^{-1}(\kappa_C) \in G(k) \bs X(k)$ is well-defined. By Theorem \ref{conj: e8 3.4} and Theorem \ref{thm: thorne1.11} this orbit is stable (regular semi-simple) and the association $\mcT^\sram(k) \ra G(k) \bs X^\rss(k)$ is bijective when $k=k^\sep$. We are now finished with the $E_8$ case. The proof of the $\mathfrak{e}_8$ case is nearly identical, with the maximal tori of $H$ replaced by Cartan subalgebras of $\mathfrak{h}$. 
		\end{proof}

	\subsection{Proof of Theorem \ref{conj: e8 3.6}}

		To prove Theorem \ref{conj: e8 3.6}, one could transplant the proof of \cite[Theorem 3.6]{thorne2016arithmetic} as the arguments almost directly apply to our situation. We have chosen to corral the parts of the argument of \cite[Theorem 3.6]{thorne2016arithmetic} that depend only on Lurie's construction into Lemma \ref{lem: Lurie-and-exts} and Lemma \ref{lem: Lurie-and-descent} in the hopes that it might be of referential convenience. 

		Let $C/k$ be a curve. A \emph{theta group} of $C$ is a central extension of $k$-groups
				\[
				\xym{
					0 \ar[r] & \mbg_m \ar[r] & \Theta \ar[r] & J_C[2] \ar[r] & 0
				}
				\]
		such that the commutator pairing on $\Theta$ descends to the Weil pairing on $J_C[2]$. A morphism of theta groups is a morphism such that the diagram
				\[
				\xym{
					0 \ar[r] & \mbg_m \ar[r] \ar@{=}[d] & \Theta_1 \ar[r] \ar[d]^{\psi} & J_C[2] \ar[r] \ar@{=}[d] & 0 \\
					0 \ar[r] & \mbg_m \ar[r] & \Theta_2 \ar[r] & J_C[2] \ar[r] & 0
				}
				\]
		commutes. Every theta group admits a natural action on the vector space $H^0( \Pic^0(C), 2\Theta)$, with $\Theta$ a theta divisor on $\Pic^0(C)$; an element $(\lambda, \omega)$ acts via pullback of sections by the translation $\tau_\omega$ and scaling by $\lambda$. Every Heisenberg group $\heisenberg{\mathcal{L}_B}$ is a theta group \cite[Section 1C]{thorne2016arithmetic}, and the natural action of $\heisenberg{\mathcal{L}_B}$ on $H^0(\Pic^0(C), \mathcal{L}_B)$ agrees with the action from above. The following well-known result underscores the importance of theta groups.

		\begin{lemma} \label{lem: mum-theta}
			\begin{enumerate}[(a)]
				\item
					Let $C/k$ be a curve and let $\mathcal{L}$ be twice a theta divisor on $\Pic^0(C)$. Then there is a canonical identification between isomorphism classes of theta groups of $C$ and elements of $H^1(k, J_C[2])$ such that the isomorphism class of the Heisenberg group $\heisenberg{\mathcal{L}}$ is identified with the trivial cocycle. 
					
				\item
					Let $A \in J_C(k)$, and let $B \in J_C(k^\sep)$ such that $[2]B = A$. Let $\tau_B\: J_C \ra J_C$ be the translation-by-$B$ morphism and let $\mathcal{L}_B = \tau_B^*\mathcal{L}$. Then the cocycle class $(\sigma \mapsto [B^\sigma - B]) \in H^1(k,J_C[2])$ corresponds to the isomorphism class of $\heisenberg{\mathcal{L}_B}$, and the isomorphism class of $\heisenberg{\mathcal{L}_B}$ is independent of the choice of $B$. 
				\end{enumerate}
		\end{lemma}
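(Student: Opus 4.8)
The plan is to deduce both parts from the classical theory of theta (Heisenberg) groups of abelian varieties together with a routine Galois descent argument; no genuinely new ingredient is needed. Throughout I write $\Gamma = \Gal(k^\sep/k)$ and recall that $\mathcal{L}$, being twice a theta divisor --- hence, as $2\Theta_\kappa$ does not depend on the theta characteristic $\kappa$, canonically attached to $C$ --- is defined over $k$, so that $\heisenberg{\mathcal{L}}$ is a theta group of $C$ over $k$ in the sense of the definition above (cf. \cite[Section 1C]{thorne2016arithmetic}).

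For (a), I would first note that over $k^\sep$ any two theta groups of $C$ are isomorphic. The commutator pairing is multiplicative under Baer sum, so the Baer difference of two theta groups --- each having the Weil pairing as its commutator --- is a central, hence abelian, extension of $J_C[2]$ by $\mbg_m$; and since $\mbg_m(k^\sep)$ is divisible, hence injective as an abelian group, every such extension splits. Thus the two theta groups are isomorphic as extensions, i.e. as theta groups. Next I would identify the automorphisms of $\heisenberg{\mathcal{L}}$ in the category of theta groups: such an automorphism has the form $\tilde e \mapsto f(\bar e)\,\tilde e$ for a unique $f \in \Hom(J_C[2], \mbg_m)$, and every such $f$ occurs, so $\underline{\Aut}(\heisenberg{\mathcal{L}}) = \underline{\Hom}(J_C[2], \mbg_m)$. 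Because $J_C$ is principally polarized, the Weil pairing provides a $\Gamma$-equivariant isomorphism $J_C[2] \iso \underline{\Hom}(J_C[2], \mbg_m)$, $t \mapsto e_2(t, \cdot)$. Since every theta group of $C$ over $k$ becomes isomorphic to $\heisenberg{\mathcal{L}}$ over $k^\sep$, the standard twisting correspondence then yields a bijection between isomorphism classes of theta groups of $C$ and $H^1(\Gamma, \underline{\Aut}(\heisenberg{\mathcal{L}})) = H^1(k, J_C[2])$, under which $\heisenberg{\mathcal{L}}$ is the trivial class. This gives (a).

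For (b), first observe that $2(\sigma(B) - B) = \sigma(A) - A = 0$, so $\sigma \mapsto \sigma(B) - B$ is a $1$-cocycle valued in $J_C[2]$ --- indeed it represents the image of $A$ under the Kummer connecting map $J_C(k)/2 \inj H^1(k, J_C[2])$ --- and replacing $B$ by $B + u$ with $u \in J_C[2](k^\sep)$ changes it by the coboundary $\sigma \mapsto \sigma(u) - u$, so its class is independent of the chosen square root. I would then use two standard facts: (i) the theta group of a line bundle $\mathcal{M}$ depends, up to canonical isomorphism, only on the class of $\mathcal{M}$ in $\Pic$; and (ii) for $x \in J_C$, pullback along the translation $\tau_x$ gives a canonical isomorphism between the theta group of $\mathcal{M}$ and that of $\tau_x^*\mathcal{M}$, covering the identity on $\mbg_m$ and on $K(\mathcal{M})$, and when $x = t$ lies in $K(\mathcal{M})$ the composite of this with the identification of (i) is an automorphism of the theta group of $\mathcal{M}$ equal to conjugation by any lift of $t$, i.e. to the element $e_{\mathcal{M}}(t, \cdot)$ of $\underline{\Hom}(K(\mathcal{M}), \mbg_m)$. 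Since $\mathcal{L}$ has type $(2,\dots,2)$, the class $\tau_B^*\mathcal{L}\otimes\mathcal{L}^{-1} \in \Pic^0(C_{k^\sep})$ depends on $B$ only through $2B = A$; hence $\mathcal{L}_B = \tau_B^*\mathcal{L}$ is $\Gamma$-invariant in $\Pic(C_{k^\sep})$ and so, since $0 \in \Pic^0(C)(k)$, descends to a line bundle over $k$, making $\heisenberg{\mathcal{L}_B}$ a theta group of $C$ over $k$ (this also re-proves its independence of $B$). Finally I would take the $k^\sep$-isomorphism $\psi := \tau_B^*\colon \heisenberg{\mathcal{L}} \iso \heisenberg{\mathcal{L}_B}$ of (ii) and compute its twisting cocycle $c_\sigma = \psi^{-1}\circ{}^\sigma\psi$: writing $t_\sigma = \sigma(B) - B$ and using $\tau_{\sigma(B)} = \tau_{t_\sigma}\circ\tau_B$ on $J_C$, fact (ii) gives ${}^\sigma\psi = \tau_{\sigma(B)}^* = n_{t_\sigma}\circ\psi$ where $n_{t_\sigma}\in\Aut(\heisenberg{\mathcal{L}_B})$ is conjugation by a lift of $t_\sigma$; transporting through $\psi$, which is the identity on $J_C[2]$, gives $c_\sigma = e_2(t_\sigma, \cdot)$, i.e. $c_\sigma = \sigma(B) - B$ under the Weil-pairing identification of (a). Hence $\heisenberg{\mathcal{L}_B}$ corresponds to the class $(\sigma \mapsto \sigma(B) - B)$, which is what (b) asserts.

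The hard part is the second half of fact (ii): that translating by a point $t$ of $K(\mathcal{M})$ and re-identifying the bundle with itself produces exactly conjugation by a lift of $t$, equivalently pairing against $t$ by the commutator (Weil) pairing. This is the one genuinely Mumford-theoretic input; once granted, the rest is formal, the only care needed being to track which of $\mathcal{L}$, $\mathcal{L}_B$, $\heisenberg{\mathcal{L}}$, $\heisenberg{\mathcal{L}_B}$ are defined over $k$ as opposed to only over $k^\sep$, and to use that morphisms of theta groups are required to fix both $\mbg_m$ and $J_C[2]$ --- which is precisely what forces $\underline{\Aut}(\heisenberg{\mathcal{L}}) = \underline{\Hom}(J_C[2], \mbg_m)$ and makes the descent argument run.
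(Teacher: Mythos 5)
Your proof is correct and is, in substance, a reconstruction of the arguments the paper delegates entirely to its references --- the paper's own proof of this lemma is only a citation to \cite[Section 1.6]{cremona2008explicit}, \cite[Section 23]{mumford1970abelian}, and \cite[Section 1C]{thorne2016arithmetic}, and you have supplied exactly the Baer-sum/descent argument for (a) and the translation-cocycle computation for (b) that those sources contain, correctly flagging the single genuine Mumford-theoretic input (that $\tau_t^*$ for $t\in K(\mathcal{L})$ induces conjugation by a lift of $t$). The one small slip is notational: $\tau_B^*\mathcal{L}\otimes\mathcal{L}^{-1}$ lives in $\Pic^0(J_C)$ (the dual abelian variety, identified with $J_C$ via the principal polarization), not in $\Pic^0(C_{k^\sep})$, but this does not affect the argument.
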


		\begin{proof}
			The first statement can be found in \cite[Section 1.6]{cremona2008explicit} for elliptic curves, or in \cite[Section 23]{mumford1970abelian} more generally. The second statement is from \cite[Section 1C]{thorne2016arithmetic}.
		\end{proof}

		The data that a curve $C$ provides to the construction of Lurie is functorial in the theta groups of $C$. We express this fact as the following two lemmas. Note that the data of a marked ramification point on $C$ is not necessary to produce the data needed for Lurie's construction. Rather, it is needed at a later point in the argument to mark an orbit in the appropriate orbit space.

		\begin{lemma} \label{lem: Lurie-and-exts}
			Let $\Lambda$ be an irreducible simply laced root lattice and let $V = \Lambda/2\Lambda$. Let $C$ be any curve such that there exists a surjection $\gamma\: V \ra \Pic^0(C)[2]$ such that the natural pairing on $V$ descends to the Weil pairing. Then the construction of Lurie from Section \ref{sec: Lurie} defines a map
				\[
					\{ \text{theta groups of $J_C[2]$} \} \ra \mathcal{D}(k).
				\]
			Moreover, if $(H_0,\theta_0, T_0, \rho_0)$ is a quadruple in the image of this map, then $H_0$ and $(H_0^{\theta_0})^\circ$ are split and $T_0$ is canonically identified with the torus $\Hom(\Lambda, \mbg_m)$.
				
		\end{lemma}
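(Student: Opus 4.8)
The plan is to run the construction from the proof of Theorem~\ref{conj: e8 3.5}, but starting from an arbitrary theta group $\Theta$ of $J_C[2]$ instead of a Heisenberg group, reading $\Lambda$ together with the Galois action $G_k \to W_\Lambda$ for which $\gamma$ is a map of Galois modules. Given $\Theta$, first pull it back along $\gamma$: set $\wtilde E := V \times_{J_C[2]} \Theta$, a central extension of $V$ by $\mbg_m$ over $k$, with projection $\psi\: \wtilde E \to V$; since $\gamma$ carries $\gen{\cdot,\cdot}_2$ to the nondegenerate Weil pairing, $\ker\gamma$ is the radical of $\gen{\cdot,\cdot}_2$ and the commutator pairing of $\wtilde E$ descends to $\gen{\cdot,\cdot}_2$ on $V$. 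Next, use the even quadratic form $q\: V \to \mbf_2$, $q(v) = \tfrac{1}{2}\gen{v',v'} \bmod 2$, which is well defined (the form on $\Lambda$ is even) and $G_k$-invariant (as $G_k$ acts through $W_\Lambda$): the cocycle identity $q(v+w) = q(v)+q(w)+\gen{v,w}_2$ together with the relation $(\wtilde e\wtilde f)^2 = [\wtilde e,\wtilde f]\,\wtilde e^2\wtilde f^2$ in $\wtilde E$ shows that $\chi_q(\wtilde e) := \wtilde e^2 (-1)^{q(\psi\wtilde e)}$ is a character of $\wtilde E$. Put $\wtilde V := \ker\chi_q$; as $\chi_q$ restricts to $\lambda \mapsto \lambda^2$ on $\mbg_m$ and is surjective after rescaling representatives, this yields a central extension $1 \to \{\pm1\} \to \wtilde V \to V \to 1$ over $k$ with $\wtilde v^2 = (-1)^{q(\psi\wtilde v)} = (-1)^{\gen{v,v}/2}$. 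Finally, let $W := H^0(\Pic^0 C, \mathcal{L})$ with $\mathcal{L}$ twice a theta divisor, carrying the natural $\Theta$-action, pulled back to $\wtilde V$ along $\wtilde V \hookrightarrow \wtilde E \to \Theta$; this action is $k$-linear, Galois-equivariant, and sends $-1$ to $-\id_W$. The quadruplet $(\Lambda, \wtilde V, W, \rho)$ satisfies conditions (1)--(4) of Data~I, so Theorem~\ref{thm: Lurie construction} applied to it produces an object $(H_0, \theta_0, T_0, \rho_0)$ of $\mathcal{D}(k)$.

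To see this defines a map out of the groupoid of theta groups, I would check functoriality: a morphism $\varphi\: \Theta_1 \to \Theta_2$ of theta groups induces $\wtilde E_1 \to \wtilde E_2$ over $V$ commuting with $\psi$ and with squaring, hence compatible with $\chi_q$, hence restricting to a morphism $\wtilde V_1 \to \wtilde V_2$ of the $\{\pm1\}$-extensions; as $\varphi$ intertwines the two $\Theta_i$-actions on the common space $W$, this assembles into a morphism of quadruplets in $\mathcal{C}(k)$, and $f_k$ of it lies in $\mathcal{D}(k)$. The tori condition on $\mathcal{D}(k)$-morphisms is automatic here, since every theta group of $J_C[2]$ feeds in the same $\Lambda$, so all the output tori are literally $\Hom(\Lambda,\mbg_m)$. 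This produces the asserted map $\{\text{theta groups of } J_C[2]\} \to \mathcal{D}(k)$, and by conditions (b) and (c) of Data~II its values have $T_0$ canonically identified with $\Hom(\Lambda,\mbg_m)$ and $\theta_0$ acting on $T_0$ by inversion.

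For the remaining claims I would argue exactly as in the proof of Theorem~\ref{conj: e8 3.5}. The construction furnishes $\rho_0\: \mathfrak{g}_0 := \mathfrak{h}_0^{d\theta_0=1} \to \mathfrak{gl}(W)$ defined over $k$; by Lemma~\ref{lem: representation dimension}, $\dim_k W = 16$, and by Lemma~\ref{lem: h^theta is D8}, $\mathfrak{g}_0$ has Dynkin type $D_8$ and is a subalgebra of $\mathfrak{h}_0$ of full rank. A $16$-dimensional representation of a type-$D_8$ Lie algebra over $k$ forces $\mathfrak{g}_0$ to be split, and a split reductive subalgebra of full rank forces $\mathfrak{h}_0$ --- hence $H_0 = \Aut(\mathfrak{h}_0)^\circ$ --- to be split. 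Since $(H_0^{\theta_0})^\circ$ has Lie algebra $\mathfrak{g}_0$, it is split as well; together with the canonical identification $T_0 \iso \Hom(\Lambda,\mbg_m)$ of Data~II, this is the ``moreover''.

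The main difficulty is bookkeeping rather than a single hard point: one must verify that an abstract theta group --- not assumed to be a geometric Heisenberg group --- genuinely acts $k$-linearly on $W = H^0(\Pic^0 C, \mathcal{L})$, that this action is functorial in $\Theta$ and restricts to $\wtilde V$ with $-1 \mapsto -\id_W$, and that all the Galois compatibilities of Data~I hold once $\Lambda$ carries the Galois action making $\gamma$ equivariant. The splitness step is also somewhat delicate, but it is word-for-word the corresponding step in the proof of Theorem~\ref{conj: e8 3.5} and can be cited directly.
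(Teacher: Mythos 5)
Your proposal is correct and takes essentially the same route as the paper, which proves the lemma by simply directing the reader to the first two paragraphs of Thorne's Theorem 3.6 and, alternatively, to ``the proof of Theorem \ref{conj: e8 3.5} above [with] the choice of Heisenberg group [replaced] with any theta group of $C$'' --- exactly the substitution you carry out in detail. Your expansion (pulling back $\Theta$ along $\gamma$, forming $\chi_q$ and $\wtilde V = \ker\chi_q$, the $16$-dimensional $W$, and the splitness argument via Lemmas \ref{lem: representation dimension} and \ref{lem: h^theta is D8}) matches the paper's own steps in the proof of Theorem \ref{conj: e8 3.5}, and the subtleties you flag (the $\Theta$-action on $W$ for an abstract theta group, and the Galois compatibilities) are implicit in the paper as well; note only that your ``moreover'' argument is written for the $E_8$ case specifically, which suffices here since the lemma is applied only with $\Lambda$ of type $E_8$ --- the paper inherits this same restriction by pointing to the $E_8$ proof.
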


		\begin{proof}
			The result is established by the first two paragraphs of the proof of \cite[Theorem 3.6]{thorne2016arithmetic}. Alternatively, one can consult the proof of Theorem \ref{conj: e8 3.5} above and replace the choice of Heisenberg group with any theta group of $C$.
		\end{proof}

		\begin{lemma} \label{lem: Lurie-and-descent}
			Let $C/k$ be a curve satisfying the conditions of the previous lemma, let $A \in J_C(k)$ represent a class in $J_C(k)/2$ and let $B \in J_C(k^\sep)$ be such that $[2]B = A$. Let $\psi_B$ be the morphism of theta groups 
				\[
				\xym{
					0 \ar[r] & \mbg_m \ar[r] \ar@{=}[d] & \heisenberg{\mathcal{L}} \ar[r] \ar[d]^{\psi_B} & J_C[2] \ar[r] \ar@{=}[d] & 0 \\
					0 \ar[r] & \mbg_m \ar[r] & \heisenberg{\mathcal{L}_B} \ar[r] & J_C[2] \ar[r] & 0
				}
				\]
			defined over $k^\sep$ induced by translation by $B$. Then the construction of Lurie gives a corresponding morphism of tuples $\psi_B\: (H_0,\theta_0, T_0, \rho_0) \ra (H_B,\theta_B, T_B, \rho_B)$. Moreover, if $i_0, i_B$ are the canonical identifications of $\Hom(\Lambda, \mbg_m)$ with $T_0, T_B$ respectively, then $\psi_B i_0 = i_B$. Furthermore, this morphism is exactly the morphism induced by the image of $\sigma \mapsto [B^\sigma - B]$ under the inclusion $J_C[2] \inj V^\vee$ via \cite[Lemma 2.4]{thorne2016arithmetic}. 
		\end{lemma}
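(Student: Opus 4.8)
The plan is to realize $\psi_B$ as the image, under the functor $f_{k^\sep}$ of Theorem \ref{thm: Lurie construction}, of an explicit morphism in $\mathcal{C}(k^\sep)$ built from translation by $B$, and then to match this with the cohomological recipe of \cite[Section 2]{thorne2016arithmetic} attached to the class $\sigma \mapsto [B^\sigma - B]$. Since $\psi_B$ is only defined over $k^\sep$ we work over $k^\sep$ throughout; the objects of $\mathcal{C}(k)$ attached to $(C,\mathcal{L})$ and to $(C,\mathcal{L}_B)$ in the proof of Theorem \ref{conj: e8 3.5} (see also Lemma \ref{lem: Lurie-and-exts}) base change to objects of $\mathcal{C}(k^\sep)$ on which $f_{k^\sep}$ recovers the base changes of $(H_0,\theta_0,T_0,\rho_0)$ and $(H_B,\theta_B,T_B,\rho_B)$, since Lurie's construction is compatible with field extension.

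First I would check that translation by $B$ gives a morphism of quadruples. Recall that $\wtilde V$ is obtained from $\heisenberg{\mathcal{L}}$ by pulling back along $\gamma\colon V \to \Pic^0(C)[2]$ to form $\wtilde E$ and passing to the kernel of the character $\chi_q$, while $W = H^0(\Pic^0(C),\mathcal{L})$ carries the Heisenberg action. The morphism $\psi_B\colon \heisenberg{\mathcal{L}} \to \heisenberg{\mathcal{L}_B}$ is the identity on $\mbg_m$ and on $J_C[2]$; pulling it back along $\gamma$ yields $\wtilde E \to \wtilde E_B$ over $\id_V$, it carries $\chi_q$ to $\chi_q$, hence descends to $\wtilde V \to \wtilde V_B$ over $\id_V$, that is over $\id_\Lambda$, and $\tau_B^*\colon W \to W_B$ is an isomorphism intertwining the two Heisenberg actions, hence the $\wtilde V$-actions. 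This is a morphism $(\Lambda,\wtilde V,W,\rho) \to (\Lambda,\wtilde V_B,W_B,\rho_B)$ in $\mathcal{C}(k^\sep)$ whose underlying lattice map is $\id_\Lambda$; applying $f_{k^\sep}$ produces the promised morphism $\psi_B\colon (H_0,\theta_0,T_0,\rho_0) \to (H_B,\theta_B,T_B,\rho_B)$, an isomorphism since $\psi_B$ on Heisenberg groups is. For the equality $\psi_B i_0 = i_B$ I would use that in Lurie's construction the maximal torus is manufactured directly from the lattice as $\Hom(\Lambda,\mbg_m)$, with $i_0,i_B$ the resulting canonical identifications as in Section \ref{sec: Lurie}; because the morphism above is $\id_\Lambda$ on lattices, the induced map $T_0 \to T_B$ is the identity when read through $i_0$ and $i_B$, and here I would invoke the precise naturality of $\Lambda \mapsto \Hom(\Lambda,\mbg_m)$ in Thorne's formulation.

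For the last assertion, Lemma \ref{lem: mum-theta}(b) identifies the isomorphism class of $\heisenberg{\mathcal{L}_B}$, as a theta group over $k$, with $c = (\sigma \mapsto [B^\sigma - B]) \in H^1(k,J_C[2])$, and via the inclusion $J_C[2] \inj V^\vee$ of \cite[Lemma 2.4]{thorne2016arithmetic} with the corresponding class in $H^1(k,V^\vee)$, to which \cite[Section 2]{thorne2016arithmetic} attaches a morphism out of $(H_0,\theta_0,T_0,\rho_0)$. It then remains to verify that twisting the quadruple $(\Lambda,\wtilde V,W,\rho)$ by $c$ and applying $f$ returns precisely the morphism $\psi_B$ built above, which amounts to observing that in both the explicit and the cohomological descriptions every constituent map is induced by translation by $B$. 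I expect this final compatibility check, unwinding Lurie's construction on both sides simultaneously, to be the main obstacle; it is nonetheless essentially the computation already carried out in the proof of \cite[Theorem 3.6]{thorne2016arithmetic}, which the present lemma is designed to isolate.
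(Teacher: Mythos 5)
Your proposal is correct and follows essentially the same route as the paper, which disposes of the lemma in two sentences by citing the functoriality remarks preceding Thorne's Lemma 2.4 and the fifth paragraph of Thorne's proof of Theorem 3.6; you have simply unwound those citations into an explicit argument (constructing the morphism in $\mathcal{C}(k^\sep)$ over $\id_\Lambda$, observing that $\chi_q$ is preserved, and deducing $\psi_B i_0 = i_B$ from the lattice map being the identity), which is the intended content.
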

		
		\begin{proof}
			Functoriality is established by the comments preceding in \cite[Lemma 2.4]{thorne2016arithmetic}. The remaining details can be found in paragraph 5 of the proof of \cite[Theorem 3.6]{thorne2016arithmetic}. 
		\end{proof}
		
		We now arrive at the main result of this section

		\begin{reptheorem}{conj: e8 3.6}
			\
			\begin{enumerate}
				\item[$E_8\:$] 
				Fix an $x = (C,P) \in \mcT^\sram(k)$ and by abuse of notation we denote by $x$ the image of $(C,P)$ in $T \sslash W_T$. Let $\pi\: X \ra X \sslash G$ denote the natural quotient map, where $X$ is as in Theorem \ref{conj: e8 3.5}-$E_8$. Note that $X \sslash G$ is canonically isomorphic to $T \sslash W_T$. Let $X_x$ be the fibre of $\pi$ over $x$ and let $J_x = J_C$. Then there is a canonical injective map
				\[
				\frac{J_x(k)}{2} \inj G(k) \bs X_x(k).
				\]
				\item[$\mathfrak{e}_8\:$] 
				Fix an $x = (C,P) \in \mcT^\tram(k)$ and by abuse of notation we denote by $x$ the image of $(C,P)$ in $\mbp \mathfrak{t} \sslash W_T$. Let $\pi\: \mbp X \ra \mbp X \sslash G$ denote the natural quotient map, where $X$ is as in Theorem \ref{conj: e8 3.5}-$\mathfrak{e}_8$. Let $\mbp X_x$ be the fibre of $\pi$ over $x$ and let $J_x = J_C$. Then there is a canonical injective map
				\[
				\frac{J_x(k)}{2} \inj G(k) \bs \mbp X_x(k).
				\]
			\end{enumerate}
		\end{reptheorem}
	
		\begin{proof}
			Let $\Lambda$ be a simply laced lattice of Dynkin type $E_8$ and let $V = \Lambda/2\Lambda$. Let $A \in J_x(k)$ be a rational point and choose $B \in J_x(k^\sep)$ such that $[2]B = A$. As before, we have by Proposition \ref{prop: del Pezzo 1} that there is an isomorphism $V \ra J_x[2]$ of $k$-groups. As before, the natural pairing on $\Lambda$ descends to the Weil pairing on $J_x[2]$.
			
			Translation by $B$ induces an isomorphism of theta groups $\heisenberg{\mathcal{L}} \iso \heisenberg{\mathcal{L}_B}$ defined over $k^\sep$. By Lemma \ref{lem: Lurie-and-exts} we obtain quadruples $(H_0, \theta_0, T_0, \rho_0)$ and $(H_B, \theta_B, T_B, \rho_B)$ from $\heisenberg{\mathcal{L}}$ and $\heisenberg{\mathcal{L}_B}$ respectively, and by Lemma \ref{lem: Lurie-and-descent} we obtain an isomorphism of these tuples $F\: (H_0, \theta_0, T_0, \rho_0) \overset{\sim}{\rightarrow} (H_B, \theta_B, T_B, \rho_B)$ defined over $k^\sep$. From Lemma \ref{lem: Lurie-and-descent}, we see that the cocycle $\sigma \mapsto F^{-1}F^\sigma$ is canonically identified with the cocycle $\sigma \mapsto [B^\sigma - B]$. Note if $A=B=0$ then all of these isomorphisms are in fact identity maps.
						
			By Remark \ref{rem: assignment-map} we choose a point in $\Hom(\Lambda, \mbg_m)^\rss (k)$ lying over $x \in T \sslash W_T(k)$, which we will denote by $\kappa_C$. We use $\kappa_C$ to construct a $G(k)$-orbit in $X_x(k)$. As in Theorem \ref{conj: e8 3.5} we obtain morphisms 
				\begin{align*}
					\varphi_0\: & (H, \theta, T, \rho) \ra (H_0, \theta_0, T_0, \rho_0) \\
					\varphi_B \: & (H, \theta, T, \rho) \ra (H_B, \theta_B, T_B, \rho_B) 
				\end{align*}
			from the standard tuple defined over $k$. The morphisms $\varphi_0, \varphi_B$ are unique up to $G(k)$-conjugacy. Via the canonical identifications of $\Hom(\Lambda, \mbg_m)$ with $T_0, T_B$ we obtain points $\kappa_C^0 \in T_0(k), \kappa_C^B \in T_B(k)$. We have that $\varphi_0^{-1}(\kappa_C^0) \in X_x(k)$ is the point constructed in Theorem \ref{conj: e8 3.5} and we obtain the corresponding $G(k)$-orbit $G(k) \cdot \varphi_0^{-1}(\kappa_C^0)$. Similarly, we obtain the point $\varphi_B^{-1}(\kappa_C^B)$ and the orbit $G(k) \cdot \varphi_B^{-1}(\kappa_C^B)$. Note by Remark \ref{rem: W_T ambiguity} the $G(k)$-orbits $G(k) \cdot \varphi_0^{-1}(\kappa_C^0)$ and $G(k) \cdot \varphi_B^{-1}(\kappa_C^B)$ are independent of the choice of $\kappa_C \in \Hom(\Lambda, \mbg_m)^\rss (k)$ lying over $x$.
						
			Note that $F_x := \varphi_B^{-1} F \varphi_0$ is an automorphism of $H$ which commutes with $\theta$. Additionally, we see that the image under $F$ of $T_0$ is exactly $T_B$ by Lemma \ref{lem: Lurie-and-descent}, and again by Lemma \ref{lem: Lurie-and-descent} have that $F(\kappa_C^0) = \kappa_C^B$. Thus, both orbits $G(k) \cdot \varphi_0^{-1}(\kappa_C^0)$ and $G(k) \cdot \varphi_B^{-1}(\kappa_C^B)$ lie in the slice $X_x$ and we have that $F$ induces an automorphism $F_x \: X_x \ra X_x$ defined over $k^\sep$ sending the orbit of $\varphi_0^{-1}(\kappa_C^0)$ to the orbit of $\varphi_B^{-1}(\kappa_C^B)$.

			As in the proof of \cite[Theorem 3.6]{thorne2016arithmetic} we have a canonical bijection from \cite[Proposition 1]{bhargava2014arithmetic}
				\[
					G(k) \bs X_x(k) \iso \ker( H^1(k, Z_G(\varphi_0^{-1}(\kappa_C^0))) \ra H^1(k,G))
				\]
			under which the orbit $G(k) \cdot \varphi_0^{-1}(\kappa_C^0)$ is sent to the zero element and under which the orbit $G(k) \cdot \varphi_B^{-1}(\kappa_C^B)$ is sent to the cocycle $\sigma \mapsto F_x^{-1}F_x^\sigma$. We also have the canonical isomorphisms
				\[
					Z_G(\varphi_0^{-1}(\kappa_C^0)) \iso Z_{G_0}(\kappa_C^0) \iso \Im(V \ra V^\vee)
				\]
			from \cite[Corollary 2.9]{thorne2013vinberg}. By Proposition \ref{prop: del Pezzo 1} we have that $\Im(V \ra V^\vee) \iso J_x[2]$. Thus there is an injection
				\[
					G(k) \bs X_x(k) \inj H^1(k, Z_G(\varphi_0^{-1}(\kappa_C^0))) \iso H^1(k, J_x[2]).
				\]

			The map in the statement of Theorem \ref{conj: e8 3.6} is defined by sending $A \in J_x(k)/2$ to the orbit $G(k) \cdot \varphi_B^{-1}(\kappa_C^B)$. We have shown that the coboundary map $\delta\: J_x(k)/2 \inj H^1(k, J_x[2])$ factors through $G(k) \bs X_x(k)$, so we see that the mapping $A \mapsto G(k) \cdot \varphi_B^{-1}(\kappa_C^B)$ is injective. This finishes the proof in the $E_8$ case. The $\mathfrak{e}_8$ case is obtained similarly.			
		\end{proof}
	

\section{Acknowledgements}

I am grateful to Nils Bruin for the careful and detailed comments throughout the writing of this article.


\begin{bibdiv}
	\begin{biblist}

		\bib{bhargava2013most}{unpublished}{
			author={Bhargava, Manjul},
			title={Most hyperelliptic curves over Q have no rational points	},
			year={2013},
			note={Arxiv prepreint arXiv:1308.0395},
		}
					
		\bib{bhargava2013average}{article}{
			author={Bhargava, Manjul},
			author={Gross, Benedict H.},
			title={The average size of the 2-Selmer group of Jacobians of
				hyperelliptic curves having a rational Weierstrass point},
			conference={
				title={Automorphic representations and $L$-functions},
			},
			book={
				series={Tata Inst. Fundam. Res. Stud. Math.},
				volume={22},
				publisher={Tata Inst. Fund. Res., Mumbai},
			},
			date={2013},
			pages={23--91},
			review={\MR{3156850}},
		}
				
		\bib{bhargava2014arithmetic}{article}{
			author={Bhargava, Manjul},
			author={Gross, Benedict H.},
			title={Arithmetic invariant theory},
			conference={
				title={Symmetry: representation theory and its applications},
			},
			book={
				series={Progr. Math.},
				volume={257},
				publisher={Birkh\"auser/Springer, New York},
			},
			date={2014},
			pages={33--54},
			review={\MR{3363006}},
			doi={10.1007/978-1-4939-1590-3\_3},
		}

		\bib{bhargava2017positive}{article}{
			author={Bhargava, Manjul},
			author={Gross, Benedict H.},
			author={Wang, Xiaoheng},
			title={A positive proportion of locally soluble hyperelliptic curves over
				$\Bbb Q$ have no point over any odd degree extension},
			note={With an appendix by Tim Dokchitser and Vladimir Dokchitser},
			journal={J. Amer. Math. Soc.},
			volume={30},
			date={2017},
			number={2},
			pages={451--493},
			issn={0894-0347},
			review={\MR{3600041}},
		}
			
		\bib{bhargava2015elliptic}{article}{
			author={Bhargava, Manjul},
			author={Shankar, Arul},
			title={Binary quartic forms having bounded invariants, and the
				boundedness of the average rank of elliptic curves},
			journal={Ann. of Math. (2)},
			volume={181},
			date={2015},
			number={1},
			pages={191--242},
			issn={0003-486X},
			review={\MR{3272925}},
			doi={10.4007/annals.2015.181.1.3},
		}
			
		\bib{bhargava2017average5selmer}{unpublished}{
			author={Bhargava, M.}
			author={Shankar, A.},
			title={The average size of the 5-Selmer group of elliptic curves is 6, and the average rank is less than 1},
			date={2017},
			note={https://arxiv.org/abs/1312.7859}
		}		

		\bib{birkenhake2004complex}{book}{
			author={Birkenhake, Christina},
			author={Lange, Herbert},
			title={Complex abelian varieties},
			series={Grundlehren der Mathematischen Wissenschaften [Fundamental
				Principles of Mathematical Sciences]},
			volume={302},
			edition={2},
			publisher={Springer-Verlag, Berlin},
			date={2004},
			pages={xii+635},
			isbn={3-540-20488-1},
			review={\MR{2062673}},
			doi={10.1007/978-3-662-06307-1},
		}
		
		\bib{bourbaki1968lie}{book}{
			author={Bourbaki, N.},
			title={\'El\'ements de math\'ematique. Fasc. XXXIV. Groupes et alg\`ebres de Lie.
				Chapitre IV: Groupes de Coxeter et syst\`emes de Tits. Chapitre V: Groupes
				engendr\'es par des r\'eflexions. Chapitre VI: syst\`emes de racines},
			language={French},
			series={Actualit\'es Scientifiques et Industrielles, No. 1337},
			publisher={Hermann, Paris},
			date={1968},
			pages={288 pp. (loose errata)},
			review={\MR{0240238}},
		}
		
		\bib{bourbaki1975lie}{book}{
			author={Bourbaki, N.},
			title={\'El\'ements de math\'ematique. Fasc. XXXVIII: Groupes et alg\`ebres de
				Lie. Chapitre VII: Sous-alg\`ebres de Cartan, \'el\'ements r\'eguliers. Chapitre
				VIII: Alg\`ebres de Lie semi-simples d\'eploy\'ees},
			language={French},
			publisher={Actualit\'es Scientifiques et Industrielles, No. 1364. Hermann,
				Paris},
			date={1975},
			pages={271},
			review={\MR{0453824}},
		}

		\bib{bruin2009twocover}{article}{
			author={Bruin, Nils},
			author={Stoll, Michael},
			title={Two-cover descent on hyperelliptic curves},
			journal={Math. Comp.},
			volume={78},
			date={2009},
			number={268},
			pages={2347--2370},
			issn={0025-5718},
			review={\MR{2521292}},
			doi={10.1090/S0025-5718-09-02255-8},
		}
						
		\bib{casnati2007rationality}{article}{
			author={Casnati, G.},
			author={Fontanari, C.},
			title={On the rationality of moduli spaces of pointed curves},
			journal={J. Lond. Math. Soc. (2)},
			volume={75},
			date={2007},
			number={3},
			pages={582--596},
			issn={0024-6107},
			review={\MR{2352722}},
			doi={10.1112/jlms/jdm011},
		}
				
		\bib{cremona2008explicit}{article}{
			author={Cremona, J. E.},
			author={Fisher, T. A.},
			author={O'Neil, C.},
			author={Simon, D.},
			author={Stoll, M.},
			title={Explicit $n$-descent on elliptic curves. I. Algebra},
			journal={J. Reine Angew. Math.},
			volume={615},
			date={2008},
			pages={121--155},
			issn={0075-4102},
			review={\MR{2384334}},
			doi={10.1515/CRELLE.2008.012},
		}
	
		\bib{dolgachev2012classical}{book}{
			author={Dolgachev, Igor V.},
			title={Classical algebraic geometry},
			note={A modern view},
			publisher={Cambridge University Press, Cambridge},
			date={2012},
			pages={xii+639},
			isbn={978-1-107-01765-8},
			review={\MR{2964027}},
			doi={10.1017/CBO9781139084437},
		}
			
		\bib{fulton1991representation}{book}{
			author={Fulton, William},
			author={Harris, Joe},
			title={Representation theory},
			series={Graduate Texts in Mathematics},
			volume={129},
			note={A first course;
				Readings in Mathematics},
			publisher={Springer-Verlag, New York},
			date={1991},
			pages={xvi+551},
			isbn={0-387-97527-6},
			isbn={0-387-97495-4},
			review={\MR{1153249}},
			doi={10.1007/978-1-4612-0979-9},
		}
	
		\bib{kulkarni2016explicit}{unpublished}{
			author={Kulkarni, Avinash},
			title={An explicit family of cubic number fields with
				large 2-rank of the class group},
			date={2016},
		}
	
		\bib{looijenga1993cohomology}{article}{
			author={Looijenga, Eduard},
			title={Cohomology of ${\scr M}_3$ and ${\scr M}^1_3$},
			conference={
				title={Mapping class groups and moduli spaces of Riemann surfaces},
				address={G\"ottingen, 1991/Seattle, WA},
				date={1991},
			},
			book={
				series={Contemp. Math.},
				volume={150},
				publisher={Amer. Math. Soc., Providence, RI},
			},
			date={1993},
			pages={205--228},
			review={\MR{1234266}},
			doi={10.1090/conm/150/01292},
		}

		\bib{lurie2001simply}{article}{
			author={Lurie, Jacob},
			title={On simply laced Lie algebras and their minuscule representations},
			journal={Comment. Math. Helv.},
			volume={76},
			date={2001},
			number={3},
			pages={515--575},
			issn={0010-2571},
			review={\MR{1854697}},
		}
	
		\bib{MR3363006}{misc}{
			author={Lucchini Arteche, Giancarlo},
			note={MathSciNet Review MR3363006, Available at http://www.ams.org/mathscinet-getitem?mr=3363006},
			date={2015},
		}	
			
		\bib{mumford1970abelian}{book}{
			author={Mumford, David},
			title={Abelian varieties},
			series={Tata Institute of Fundamental Research Studies in Mathematics,
				No. 5 },
			publisher={Published for the Tata Institute of Fundamental Research,
				Bombay; Oxford University Press, London},
			date={1970},
			pages={viii+242},
			review={\MR{0282985}},
		}
			
		\bib{mumford1994git}{book}{
			author={Mumford, D.},
			author={Fogarty, J.},
			author={Kirwan, F.},
			title={Geometric invariant theory},
			series={Ergebnisse der Mathematik und ihrer Grenzgebiete (2) [Results in
				Mathematics and Related Areas (2)]},
			volume={34},
			edition={3},
			publisher={Springer-Verlag, Berlin},
			date={1994},
			pages={xiv+292},
			isbn={3-540-56963-4},
			review={\MR{1304906}},
			doi={10.1007/978-3-642-57916-5},
		}
			
		\bib{poonen2014most}{article}{
			author={Poonen, Bjorn},
			author={Stoll, Michael},
			title={Most odd degree hyperelliptic curves have only one rational point},
			journal={Ann. of Math. (2)},
			volume={180},
			date={2014},
			number={3},
			pages={1137--1166},
			issn={0003-486X},
			review={\MR{3245014}},
			doi={10.4007/annals.2014.180.3.7},
		}
		
		\bib{rains2017invariant}{unpublished}{
			author={Rains, E. M.}
			author={Sam, S. V.},
			title={Invariant theory of $\wedge^3(9)$ and genus 2 curves},
			date={2017},
			note={https://arxiv.org/abs/1702.04840}
		}
		
		\bib{reeder2010torsion}{article}{
			author={Reeder, Mark},
			title={Torsion automorphisms of simple Lie algebras},
			journal={Enseign. Math. (2)},
			volume={56},
			date={2010},
			number={1-2},
			pages={3--47},
			issn={0013-8584},
			review={\MR{2674853}},
			doi={10.4171/LEM/56-1-1},
		}
				
		\bib{romano2017singularities}{unpublished}{
			author={Romano, Beth}
			author={Thorne, Jack},
			title={On the arithmetic of simple singularities of type E},
			date={2017},
		}
		
		\bib{thorne2013vinberg}{article}{
			author={Thorne, Jack A.},
			title={Vinberg's representations and arithmetic invariant theory},
			journal={Algebra Number Theory},
			volume={7},
			date={2013},
			number={9},
			pages={2331--2368},
			issn={1937-0652},
			review={\MR{3152016}},
			doi={10.2140/ant.2013.7.2331},
		}
		
		\bib{thorne2016arithmetic}{article}{
			author={Thorne, Jack A.},
			title={Arithmetic invariant theory and 2-descent for plane quartic
				curves},
			note={With an appendix by Tasho Kaletha},
			journal={Algebra Number Theory},
			volume={10},
			date={2016},
			number={7},
			pages={1373--1413},
			issn={1937-0652},
			review={\MR{3554236}},
			doi={10.2140/ant.2016.10.1373},
		}
		
		\bib{wang2013pencils}{book}{
			author={Wang, Xiaoheng},
			title={Pencils of quadrics and Jacobians of hyperelliptic curves},
			note={Thesis (Ph.D.)--Harvard University},
			publisher={ProQuest LLC, Ann Arbor, MI},
			date={2013},
			pages={148},
			isbn={978-1303-18755-1},
			review={\MR{3167287}},
		}
		
		\bib{vakil2001twelve}{article}{
			author={Vakil, Ravi},
			title={Twelve points on the projective line, branched covers, and
				rational elliptic fibrations},
			journal={Math. Ann.},
			volume={320},
			date={2001},
			number={1},
			pages={33--54},
			issn={0025-5831},
			review={\MR{1835061}},
			doi={10.1007/PL00004469},
		}
	
		\bib{zarhin2008delpezzo}{article}{
			author={Zarhin, Yu. G.},
			title={Del Pezzo surfaces of degree 1 and Jacobians},
			journal={Math. Ann.},
			volume={340},
			date={2008},
			number={2},
			pages={407--435},
			issn={0025-5831},
			review={\MR{2368986}},
			doi={10.1007/s00208-007-0157-4},
		}
				
	\end{biblist}
\end{bibdiv}

\end{document}